\documentclass[11pt,reqno]{amsart}

\usepackage[T1]{fontenc}
\usepackage{lmodern}
\usepackage{amsmath,amssymb,mathtools,mathrsfs}
\usepackage{booktabs}
\usepackage{microtype}
\usepackage{enumitem}
\usepackage{listings}
\usepackage{url}
\usepackage[hidelinks]{hyperref}

\hypersetup{
  pdftitle={Derivative Sums of Balanced Gamma Quotients and Multiple Zeta Values: Five Conjectures of Zhi-Wei Sun},
  pdfauthor={Shivam Nalin Patel},
  pdfsubject={Balanced gamma quotients, Wilf--Zeilberger identities, and multiple zeta values},
  pdfkeywords={balanced gamma quotient, derivative sum, inverse central binomial sum, Wilf--Zeilberger identity, multiple zeta value, exact certificate}
}

\allowdisplaybreaks
\numberwithin{equation}{section}

\newtheorem{theorem}{Theorem}[section]
\newtheorem{proposition}[theorem]{Proposition}
\newtheorem{lemma}[theorem]{Lemma}
\newtheorem{corollary}[theorem]{Corollary}
\theoremstyle{definition}
\newtheorem{definition}[theorem]{Definition}
\theoremstyle{remark}
\newtheorem{remark}[theorem]{Remark}
\newtheorem{conjecture}[theorem]{Conjecture}

\newcommand{\Coeff}{\mathcal C}
\newcommand{\opA}{\mathsf A}
\newcommand{\opB}{\mathsf B}
\newcommand{\opC}{\mathsf C}
\newcommand{\opD}{\mathsf D}
\newcommand{\Q}{\mathbb Q}

\newcommand{\onevec}[1]{\{1\}^{#1}}

\newcommand{\Span}{\operatorname{span}_{\Q}}

\newcommand{\RF}[2]{(#1)_{#2}}

\lstdefinestyle{proofcode}{
  language=Python,
  basicstyle=\ttfamily\tiny,
  numbers=left,
  numberstyle=\tiny,
  stepnumber=1,
  numbersep=6pt,
  frame=single,
  breaklines=true,
  breakatwhitespace=false,
  columns=fullflexible,
  keepspaces=true,
  showstringspaces=false,
  tabsize=4,
  captionpos=b
}

\title[Derivative sums and five conjectures]
{Derivative Sums of Balanced Gamma Quotients and Multiple Zeta Values:\\
Five Conjectures of Zhi-Wei Sun}
\author{Shivam Nalin Patel}
\address{Independent Researcher, Newark, California, USA}
\email{patelshivam99@gmail.com}
\date{}

\subjclass[2020]{Primary 11M32; Secondary 33B15, 11B65, 33C20, 33F10}
\keywords{Balanced gamma quotient, higher derivative, inverse central-binomial series, Wilf--Zeilberger identity, multiple zeta value, exact certificate}

\begin{document}
\raggedbottom

\begin{abstract}
We introduce a uniform reduction for derivative sums of balanced gamma
quotients.  For exponent data $(a_i,e_i)$ satisfying
$\sum_i e_i a_i=0$, the translation-dependent gamma prefactor is governed
by the characteristic power sums $\chi_m=\sum_i e_i a_i^m$ through
\[
 \log C(u)=\sum_{m\ge2}(-1)^m\frac{\chi_m}{m}\zeta(m)u^m.
\]
This separates the universal gamma contribution from a hypergeometric
coefficient-extraction problem and organizes three exponential families.
For $\chi_m=2-2^m$, diagonal and symmetric specializations of a
four-parameter Wilf--Zeilberger identity prove Conjectures~4.2 and~4.3 of
Zhi-Wei Sun.  For $\chi_m=3-3^m$, exact span certificates in the
coefficient spaces of Au's Example~IV prove corrected forms of
Conjectures~4.4 and~4.5.  For
$\chi_m=4^m-10\cdot2^m+16$, a half-integer specialization of Au's first
$1/\pi^4$ construction, combined with the diagonal transformation in his
Example~VI, proves Conjecture~4.6 through weight eleven.  The transformed
sides reduce to ordinary multiple zeta values, and every computer-assisted
acceptance test is exact: explicit rational WZ certificates and separately
implemented MZV certificate checkers use no numerical recognition, PSLQ,
or conjectural MZV dimensions.  We also identify four errors in the printed
statements of Conjectures~4.2--4.5.
\end{abstract}

\maketitle

\section{Introduction}

Let $\psi_1=\psi'$ be the trigamma function and define, for $x>0$,
\begin{equation}\label{eq:f123}
\begin{aligned}
 f_1(x)&=\frac{\Gamma(x)^2}{2\Gamma(2x)},\\
 f_2(x)&=\bigl(\zeta(2)-\psi_1(x)\bigr)
        \frac{\Gamma(x)^2}{2x\Gamma(2x)},\\
 f_3(x)&=\frac{\Gamma(x)^2}{2x^3\Gamma(2x)}.
\end{aligned}
\end{equation}
For the cubic branch put
\begin{align}
 \Phi(x)&=e^{\pi i x}
 \frac{(28x^2-18x+3)\Gamma(x)^3}
 {3x^2(2x-1)^3\Gamma(3x)},\label{eq:Fdef}\\
 \Psi(x)&=e^{\pi i x}
 \frac{P(x)\Gamma(x)^3}
 {3x^3(2x-1)^4\Gamma(3x)},\label{eq:Psidef}\\
 P(x)&=560x^4-640x^3+408x^2-136x+17.\label{eq:Pdef}
\end{align}
For the inverse-$4096$ branch put
\begin{align}
 P_6(x)&=5460x^4-8341x^3+4864x^2-1280x+128,\label{eq:intro-P6}\\
 f_6(x)&=\frac{P_6(x)\Gamma(4x-1)\Gamma(x)^{16}}
 {256\Gamma(2x)^{10}}.\label{eq:intro-f6}
\end{align}
Theorem~\ref{thm:f6} proves Sun's Conjecture~4.6 for this function.

The functions in \eqref{eq:f123} and \eqref{eq:Fdef}--\eqref{eq:Pdef} are those labelled $f_2,f_3,f_4,f_5$ in Conjectures~4.2--4.5 of Sun's collection \cite{Sun2026}; \eqref{eq:intro-f6} is Sun's $f_6$.  At positive
integers,
\begin{equation}\label{eq:integer-values}
 f_2(n)=\frac{H_{n-1}^{(2)}}{n^2\binom{2n}{n}},\qquad
 f_3(n)=\frac{1}{n^4\binom{2n}{n}}.
\end{equation}
The companion paper \cite{Patel2026} treats Conjecture~4.1 by a different,
beta-integral method.  No result from that paper is used here.

We use the convention
\begin{equation}\label{eq:mzv-def}
 \zeta(s_1,\ldots,s_d)=
 \sum_{n_1>\cdots>n_d\ge1}\frac{1}{n_1^{s_1}\cdots n_d^{s_d}},
 \qquad s_1\ge2.
\end{equation}

\begin{theorem}\label{thm:main}
For the functions in \eqref{eq:f123},
\begin{align}
 \sum_{n=1}^{\infty}f_2'(n)
 &=\frac{29}{27}\zeta(5), \label{eq:f2-1}\\
 \sum_{n=1}^{\infty}f_2''(n)
 &=-\frac{4115}{648}\zeta(6)-\frac83\zeta(3)^2, \label{eq:f2-2}\\
 \sum_{n=1}^{\infty}f_2^{(3)}(n)
 &=\frac{432\zeta(3)\zeta(4)+1069\zeta(7)-116\zeta(2)\zeta(5)}{18}, \label{eq:f2-3}\\
 \sum_{n=1}^{\infty}f_2^{(4)}(n)
 &=-\frac{359939}{648}\zeta(8)-128\zeta(3)\zeta(5)
 +32\zeta(2)\zeta(3)^2+\frac{224}{9}\zeta(5,3), \label{eq:f2-4}\\
 \sum_{n=1}^{\infty}f_2^{(5)}(n)
 &=\frac{459385}{81}\zeta(9)-\frac{10690}{9}\zeta(2)\zeta(7)
 -\frac{640}{3}\zeta(3)^3 \notag\\
 &\hspace{18mm}+1990\zeta(4)\zeta(5)-40\zeta(3)\zeta(6), \label{eq:f2-5}
\end{align}
and
\begin{align}
 \sum_{n=1}^{\infty}f_3''(n)&=\frac{3073}{216}\zeta(6), \label{eq:f3-2}\\
 \sum_{n=1}^{\infty}f_3^{(3)}(n)&=\frac{176\zeta(2)\zeta(5)-1423\zeta(7)}{12}, \label{eq:f3-3}\\
 \sum_{n=1}^{\infty}f_3^{(4)}(n)&=\frac{752537\zeta(8)+25344\zeta(5,3)}{1080}, \label{eq:f3-4}\\
 \sum_{n=1}^{\infty}f_3^{(5)}(n)&=660\zeta(4)\zeta(5)+\frac{7115}{3}\zeta(2)\zeta(7)
 -\frac{283010}{27}\zeta(9). \label{eq:f3-5}
\end{align}
Consequently Sun's Conjectures~4.2 and~4.3 hold.
\end{theorem}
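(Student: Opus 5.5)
Here is the plan. I would put all the derivative sums into one generating function in a shift variable $u$. Expanding $\sum_{n\ge1} f_j(n+u)$ in powers of $u$ gives
\[
 \sum_{n\ge1} f_j^{(k)}(n) = k!\,[u^k]\sum_{n\ge1} f_j(n+u),
\]
so it suffices to compute $S_j(u)=\sum_{n\ge1}f_j(n+u)$ as a power series through $u^5$.

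\textbf{Step 1: separate the gamma prefactor.} For $f_1,f_3$ the gamma data is $\Gamma(x)^2/\Gamma(2x)$, i.e.\ $(a_i,e_i)=\{(1,2),(2,-1)\}$. Then $\sum e_ia_i=0$ and $\chi_m=2-2^m$. I would write
\[
 \frac{\Gamma(n+u)^2}{\Gamma(2n+2u)} = C(u)\,\frac{\Gamma(n)^2}{\Gamma(2n)}\cdot \frac{(1+u)_{n-1}^2\,\Gamma(1)^2\Gamma(2)}{(2+2u)_{2n-2}\,\Gamma(1+u)^2\Gamma(2+2u)^{-1}}\cdots
\]
After normalisation, the $n$-dependence becomes a ratio of Pochhammer symbols $(1+u)_n^2/(1+2u)_{2n}$. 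The universal factor $C(u)$, with $\log C(u)=\sum_{m\ge2}(-1)^m\chi_m\zeta(m)u^m/m$, absorbs $\Gamma(1+u)^2/\Gamma(1+2u)$. Balancedness is exactly what removes the $\gamma$-term. For $f_2$, the extra factor $\zeta(2)-\psi_1(n+u)$ equals $\sum_{k=1}^{n-1}(k+u)^{-2}$ plus $\zeta(2)-\psi_1(1+u)$. The latter is an explicit series in $\zeta$-values times $u^m$. So $S_2(u)$ splits as $C(u)$ times a combination of a twisted "harmonic" sum and a multiple of an $f_1$-type sum at shifted argument.

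\textbf{Step 2: WZ identity.} The remaining task is the coefficient extraction of sums such as
\[
 \sum_{n\ge1}\frac{(1+u)_{n-1}^2}{(n+u)^{3}(1+2u)_{2n-1}}\ \ \text{and}\ \ \sum_{n}\frac{(\cdots)}{(n+u)}\sum_{k<n}\frac1{(k+u)^2}.
\]
For this I would use a four-parameter Wilf--Zeilberger pair $F(n,k;a,b,c,d)$ of Markov/Apéry type. This is the classical kind that produces $\sum 1/(n^3\binom{2n}{n})$-style identities, e.g.\ the Koecher/Leshchiner generating function
\[
 \sum_{n\ge1}\frac{1}{n\binom{2n}{n}(n^2-x^2)}\ \text{ versus }\ \sum_n \frac{(-1)^{n}}{n^3}\prod\frac{1-x^2/k^2}{\ldots}.
\]
Summing the WZ pair over $k$ converts the slowly structured balanced series into a series whose terms are rational in $n,u$ times products $\prod_{k<n}(1-u^2/k^2)$ or their reciprocals. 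The diagonal specialisation of the four parameters yields the $f_3$ family. The symmetric specialisation yields the $f_2$ family, since the inner harmonic sum is the derivative of such a product in an auxiliary parameter.

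\textbf{Step 3: expand into MZVs and verify.} Expanding these products in $u$ gives multiple harmonic sums. Hence each coefficient is a $\Q$-combination of MZVs of weight $k+4$ (for $f_2,f_3$, weights $5$ through $9$). I then multiply by the expansion of $C(u)$ and reduce via the standard double-shuffle relations up to weight $9$. These are known exact identities, e.g.\ $\zeta(3,1)$-type reductions, and $\zeta(5,3)$ is retained as the sole irreducible at weight $8$. The reduced results are compared with \eqref{eq:f2-1}--\eqref{eq:f3-5}. The certificate and rational identities are checked exactly.

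\textbf{Main obstacle.} The main obstacle is Step 2. Finding and verifying the four-parameter WZ pair, and especially controlling the boundary terms and limits (convergence as $k\to\infty$ uniformly in $u$ near $0$), is the real content. I would first try the Apéry–Koecher–Almkvist–Granville two-parameter identity and adjoin parameters until the symmetric specialisation produces the $\psi_1$ factor. Step 3 is mechanical but error-prone. Sun's printed coefficients may need correction, consistent with the abstract's remark about errors.
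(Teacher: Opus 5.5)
Your outer framework matches the paper's: the balanced prefactor $Q(t)=\Gamma(1+t)^2/\Gamma(1+2t)$ with $\chi_m=2-2^m$, the split $\zeta(2)-\psi_1(n+t)=S(t)+\mathcal T_n(t)$, and a final exact MZV reduction by proved relations. The gap is Step~2, which is where the proof actually lives. There your proposal is unspecified, and its one concrete structural claim is incorrect.

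You do not name a WZ identity. The Koecher/Almkvist--Granville-type identities you propose to start from are even deformations in $x$, so their coefficients involve only even-order harmonic sums. The translation derivatives of $f_3(n+u)$ bring in $H_{n-1}^{(m)}$ and $H_{2n}^{(m)}$ for every $m$, which is why a genuinely multi-parameter coefficient space is needed.

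The identity that works is Au's $\mathcal L(a,b,c,d)=\sum_n\mathcal R_n(a,b,c,d)$. Its simple side expands into $\mathsf H_{n-1}(\onevec{i})$ and $\mathsf H_n^\star(\onevec{k})$, not into products $\prod_{k<n}(1-u^2/k^2)$. For this identity the diagonal slice does \emph{not} give $f_3$. At $(t,t,2t,t)$ the polynomial $P_n$ equals $3(n+t)^2$, which cancels exactly the extra factor $(n+t)^{-2}$ separating $B_n(t)$ from $A_n(t)$. So the diagonal yields $A(t)$, with coefficients of weight $r+2$, not $E(t)$.

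To reach the $f_3$ kernel you must find, for each order $r$, a homogeneous degree-$(r+2)$ functional $\Delta_r$ on the Taylor coefficients in $(a,b,c,d)$. It must satisfy $\Coeff_{\Delta_r}(P_ne^{\Lambda_n})=[t^r]e^{\Theta_n(t)}$ identically in the free ring generated by $1/n$, $H_{n-1}^{(m)}$ and $H_{2n}^{(m)}$. Nothing guarantees a priori that such a $\Delta_r$ exists. The paper establishes it only by solving and checking these finite linear systems for $r\le5$ (Certificate~A), and your plan has no substitute for this step.

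The $f_2$ side is also not a clean specialization. On the slice $(t+s,t-s,2t,t)$ one has $P_n=3(n+t)^2-s^2$. Hence $\partial_s^2\mathcal R_n|_{s=0}=-2B_n(t)-12A_n(t)\mathcal T_n(t)$, and $W(t)=-\tfrac1{12}\partial_s^2\mathcal L(t+s,t-s,2t,t)|_{s=0}-\tfrac16E(t)$. So the five $f_2$ evaluations need the $f_3$ coefficients $e_r$ as input. Treating the two families as independent specializations leaves an uncontrolled $\sum_nB_n$ term.

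The obstacle you single out, boundary terms uniform in $u$, is not where the difficulty lies. Au's identity is already proved and both sides are holomorphic near the origin, so termwise Taylor comparison is immediate.

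Your Step~3 is acceptable in principle. The paper uses duality, convergent shuffle--stuffle, the sum formula and the Ihara--Kaneko--Zagier derivation relations, with explicit rational certificates. But without a concrete WZ identity and verified span certificates there is not yet anything to reduce.
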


Remove the phases from \eqref{eq:Fdef}--\eqref{eq:Psidef} by writing
\begin{equation}\label{eq:hdefs}
 U(x)=e^{-\pi i x}\Phi(x),\qquad V(x)=e^{-\pi i x}\Psi(x),
\end{equation}
and set
\begin{equation}\label{eq:Rdefs}
 R_r^U=\sum_{n=1}^{\infty}(-1)^nU^{(r)}(n),\qquad
 R_r^V=\sum_{n=1}^{\infty}(-1)^nV^{(r)}(n).
\end{equation}

\begin{theorem}\label{thm:real}
The alternating real sums for $U$ are
\begin{align}
 R_0^U&=-\frac{\pi^4}{45},& R_1^U&=20\zeta(5),\label{eq:RA01}\\
 R_2^U&=-219\zeta(6),\label{eq:RA2}\\
 R_3^U&=3276\zeta(7)-360\zeta(2)\zeta(5),\label{eq:RA3}\\
 R_4^U&=-36258\zeta(8)-768\zeta(5,3),\label{eq:RA4}\\
 R_5^U&=898560\zeta(9)-196560\zeta(2)\zeta(7)-19800\zeta(4)\zeta(5).\label{eq:RA5}
\end{align}
The corresponding sums for $V$ are
\begin{align}
 R_0^V&=180\zeta(5)-112\zeta(2)\zeta(3),\label{eq:RB0}\\
 R_1^V&=392\zeta(3)^2-190\zeta(6),\label{eq:RB1}\\
 R_2^V&=3840\zeta(7)-1080\zeta(2)\zeta(5)-5040\zeta(3)\zeta(4),\label{eq:RB2}\\
 R_3^V&=\frac15\bigl(2298\zeta(8)+312480\zeta(3)\zeta(5)+53856\zeta(5,3)
 -35280\zeta(2)\zeta(3)^2\bigr),\label{eq:RB3}\\
 R_4^V&=25088\zeta(3)^3-400680\zeta(3)\zeta(6)
 \notag\\
 &\quad-35640\zeta(4)\zeta(5)-138240\zeta(2)\zeta(7)
 -236480\zeta(9).\label{eq:RB4}
\end{align}
Together with the elementary phase-removal formula, these give the corrected
forms of Sun's Conjectures~4.4 and~4.5.
\end{theorem}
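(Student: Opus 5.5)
We prove Theorem~\ref{thm:real} by reducing it to a coefficient-extraction problem. The plan has four steps: build a generating function in a shift parameter, separate off a universal gamma prefactor, expand the remaining hypergeometric series with exact certificates, and assemble.

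\emph{Step 1: generating function.} Fix $U$ and $V$ and form
\[
G_U(u)=\sum_{n\ge1}(-1)^nU(n+u)=\sum_{r\ge0}\frac{R_r^U}{r!}u^r,
\]
and define $G_V$ in the same way. Termwise differentiation is justified because $\Gamma(x)^3/\Gamma(3x)$ decays like $27^{-x}$ uniformly in a complex neighbourhood of $u=0$. The quotient $\Gamma(n+u)^3/\Gamma(3n+3u)$ is balanced with exponent data $(1,3),(3,-1)$, since $3\cdot1-1\cdot3=0$. Hence $\chi_m=3-3^m$, and the translation factor is
\[
C(u)=\exp\Bigl(\sum_{m\ge2}(-1)^m\frac{3-3^m}{m}\zeta(m)u^m\Bigr),
\]
that is, $C(u)=\Gamma(1+u)^3/\Gamma(1+3u)$ up to the exponential factors that cancel by balancing.

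\emph{Step 2: hypergeometric form.} Write
\[
\frac{\Gamma(n+u)^3}{\Gamma(3n+3u)}=C(u)\,\frac{\RF{1+u}{n-1}^3}{\RF{1+3u}{3n-1}}\cdot(\text{rational in }u).
\]
Then $G_U(u)/C(u)$ becomes a hypergeometric series in $n$ whose summand is rational in $(n,u)$. Its Taylor coefficients in $u$ are alternating series of the form $\sum_n(-1)^n\binom{3n}{n}^{-1}\cdot(\text{polynomial in }1/n,\ 1/(2n-1))\cdot(\text{products of harmonic-type sums})$. These lie in the coefficient spaces of Au's Example~IV. The quadratic factors $28x^2-18x+3$ and $P(x)$ are exactly the ones that make the summand a WZ-type companion. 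At $u=0$ this gives $R_0^U=-\pi^4/45=-2\zeta(4)$ and the $R_0^V$ value, which I will check against Au's base identities first.

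\emph{Step 3: the main obstacle.} For each $r\le5$ (for $U$) and $r\le4$ (for $V$), the $u^r$ coefficient of $G/C$ must be expressed in MZVs of weight $r+4$ (for $U$) or $r+5$ (for $V$). I would use Au's Example~IV: it gives a generating-function identity whose $u$-expansion expresses these $\binom{3n}{n}^{-1}$ harmonic sums through colored MZVs at sixth roots of unity, which then descend to ordinary MZVs. Concretely, I would construct the finite spanning set of Example~IV coefficient sums at each weight. I would then solve exactly for rational vectors writing our coefficient as a combination of spanned elements with known MZV evaluations. Each such \emph{span certificate} is checked by exact rational linear algebra, and the resulting MZV combinations are reduced with the double-shuffle relations up to weight $9$. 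Only $\zeta(5,3)$ survives as a genuine depth-two element; this is consistent with the right-hand sides of the statement. The risk is that the spanning set does not contain our sums at weights $8$ and $9$. If that happens, I would first try enlarging it with the $u$-derivatives of Au's seed, and shifts $n\to n+1$ of it, before looking for anything new.

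\emph{Step 4: assembly.} Multiply the certified series for $G/C$ by the series of $C(u)$, whose coefficients are polynomials in the $\zeta(m)$, and read off $R_r=r!\,[u^r]$. The products $\zeta(a)\zeta(b)$ arising here are left as they are or reduced to the normal form of the statement.

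Finally, the corrected Conjectures~4.4 and~4.5 follow from the phase-removal formula. By Leibniz, $\Phi^{(r)}(n)=(-1)^n\sum_k\binom rk(\pi i)^{r-k}U^{(k)}(n)$, so each $\sum_n\Phi^{(r)}(n)$ is a finite combination of the $R_k^U$, and likewise for $\Psi$ and the $R_k^V$.
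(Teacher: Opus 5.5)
Your plan matches the paper's proof: you factor out the universal prefactor $C(u)=\Gamma(1+u)^3/\Gamma(1+3u)$ with $\chi_m=3-3^m$, certify by exact linear algebra that the kernel coefficients (in the paper, the shifted $[u^{2+r}]\,u^2K_U$ and $[u^{3+r}]\,u^3K_V$, as functions of $n$ in the free algebra of harmonic symbols) lie in the span of Au's Example~IV coefficient families, evaluate the transformed sides as ordinary MZVs, recombine with the $c_j$ via the convolution, and finish with Leibniz for the phase. One small slip: the central factor at $u=0$ is $n!^3/(3n)!$, not $\binom{3n}{n}^{-1}$; this is what gives the $27^{-n}$ rate you correctly cite. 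Also, in the paper the transformed sides land directly in ordinary MZVs by Au's coefficient theorem, with no stage of colored values at sixth roots of unity.
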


\begin{proposition}[Corrections to the source statements]\label{prop:errata}
The following four changes are required in \cite[Section~4]{Sun2026}.
\begin{enumerate}[label=\textup{(\roman*)}]
\item In (4.12), the left side must be $\sum_{k\ge1}f_3^{(5)}(k)$, not
$\sum_{k\ge1}f_4^{(5)}(k)$.
\item In (4.14), the right side must be
$-198\zeta(6)+40\pi i\zeta(5)$.
\item In (4.17), the imaginary term must be
$-240\pi i\bigl(665\zeta(8)+16\zeta(5,3)\bigr)$.
\item In Conjecture~4.5, the coefficient of $x^2$ in Sun's polynomial
$S(x)$ must be $408$, not $480$.
\end{enumerate}
\end{proposition}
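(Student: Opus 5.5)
Each of the four items is a claim that a printed formula in \cite[Section~4]{Sun2026} is false as written and true after the stated change. The plan is therefore comparative. For each item I will (a) show that the corrected identity follows from Theorems~\ref{thm:main} and~\ref{thm:real}, together with the phase-removal formula, and (b) show that the printed version contradicts those theorems.

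\textbf{Item (i).} Sun's (4.12) evaluates a fifth-derivative sum by an expression of weight nine. Its right side agrees with \eqref{eq:f3-5}, which is established for $f_3$. Sun's $f_4$ is our $\Phi$, which carries the phase $e^{\pi i x}$ and produces complex values. The printed left side $\sum_k f_4^{(5)}(k)$ therefore cannot equal the real right side of (4.12). The fifth derivative of $\Phi$ at integers, summed, has imaginary part a nonzero rational multiple of $\pi$ times lower-weight terms drawn from the $R_r^U$. Comparing the statement with \eqref{eq:f3-5} shows that $f_3$ is intended, so this is a labelling slip.

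\textbf{Items (ii) and (iii).} These follow from the phase-removal step. Since $\Phi=e^{\pi i x}U$ and $e^{\pi i n}=(-1)^n$, Leibniz's rule gives
\[
\sum_{n\ge1}\Phi^{(r)}(n)=\sum_{j=0}^{r}\binom{r}{j}(\pi i)^{r-j}R_j^U .
\]
The analogous formula holds for $\Psi$ with $R_j^V$. For (4.14) I will substitute the values $R_0^U,R_1^U,R_2^U$ from Theorem~\ref{thm:real} with the appropriate $r$. I then rewrite $\pi^2$ and $\pi^4$ through $\zeta(2)=\pi^2/6$ and $\zeta(4)=\pi^4/90$, and verify that the result is $-198\zeta(6)+40\pi i\zeta(5)$. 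For instance, for $r=2$ the real part is $R_2^U-\pi^2R_0^U=-219\zeta(6)+\pi^6/45$, and $\pi^6/45=21\zeta(6)$. For (4.17) I collect the imaginary part in the same way: it is a sum of odd powers of $\pi i$ times $R_j^U$ (or $R_j^V$). Reducing the products of even zeta values to $\zeta(8)$ should give $-240\pi i(665\zeta(8)+16\zeta(5,3))$, with $\zeta(5,3)$ entering through $R_4$ or $R_3$. To show the printed version is wrong, I note that its imaginary part differs from this one. Since $\zeta(8)$ and $\zeta(5,3)$ enter linearly and $\zeta(5,3)/\pi^8$ is not known to be rational, I will argue this as follows: the printed and corrected expressions differ by an explicit nonzero number whose sign can be decided by a crude numerical bound. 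That is a rigorous inequality, not a recognition step.

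\textbf{Item (iv).} Here the corrected polynomial $P$ in \eqref{eq:Pdef}, with $x^2$-coefficient $408$, is exactly the one for which Theorem~\ref{thm:real} gives clean MZV values. With $480$, the function differs from $\Psi$ by
\[
72x^2\Gamma(x)^3e^{\pi i x}\big/\bigl(3x^3(2x-1)^4\Gamma(3x)\bigr).
\]
I must show that the corresponding derivative sums then fail Sun's stated right sides. The simplest check is at $r=0$: the extra term contributes $\sum_n(-1)^n\,24\,\Gamma(n)^3/\bigl(n(2n-1)^4\Gamma(3n)\bigr)$. This series converges geometrically, so its value can be bounded away from zero by truncation with an explicit tail bound. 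It is therefore nonzero, and Sun's value for the $480$ version cannot hold. I expect item (iv), together with the non-equality half of (iii), to be the main obstacle. Proving that the printed versions are false requires a rigorous nonvanishing statement rather than an identity. I would handle it by interval arithmetic on rapidly convergent series, keeping this separate from the exact certificates used for the theorems.
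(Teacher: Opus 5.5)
Your proposal is correct and follows essentially the paper's route. The corrected (4.14) and (4.17) come from the phase-removal Leibniz formula applied to the proved values $R_q^U$; for (4.17), which is the $\Phi^{(5)}$ identity, only $R_0^U,R_2^U,R_4^U$ enter, so the hedge ``(or $R_j^V$)'' is unnecessary. Item (i) rests on \eqref{eq:f3-5}, and item (iv) is anchored, as in the paper, at the zeroth-order $\Psi$ identity.

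The nonvanishing steps you flag as the main obstacle are immediate. For (i), $665\zeta(8)+16\zeta(5,3)>0$ termwise, so $\sum_n\Phi^{(5)}(n)$ is non-real. For (iv), your extra zeroth-order series is alternating with terms $-12$, $1/810$, \dots, hence nonzero. Since it shifts $\Im\sum_n\Psi'(n)$ by $\pi$ times its value, the first-derivative identity \eqref{eq:Psi1} also fails for the polynomial with $480$.
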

The first item is forced by the surrounding definition of $f_3$.  The next
two follow already from Leibniz's rule after phase removal.  The final one is
forced by the proved zeroth-order identity in Au's Example~IV and the
identity discussed in \cite{SunMO2025,Au2026}; the polynomial with $480$
does not lie in the required WZ coefficient space.

\subsection*{Relation to earlier work and contribution}
Classical inverse central-binomial series go back at least to Lehmer and
Zucker, with systematic half-integer and all-order treatments developed by
Weinzierl and by Kalmykov, Ward, and Yost
\cite{Lehmer1985,Zucker1985,Weinzierl2004,KWY2007}.  Iterated-integral,
colored-MZV, and cyclotomic approaches include
\cite{Ablinger2014,CGZ2024,XuZhao2023,Zhou2023,SunZhou2024,SunZhou2026}.
Inverse central-binomial sums naturally connect to sixth-root
polylogarithms and multiple Clausen values \cite{BBK2001}, while WZ seeds
provide exact transformations to coefficient spaces whose simple sides are
multiple zeta values \cite{Au2023,AuSeeds2025,Au2026}.  Hou and Sun use
related parameter differentiation to prove other derivative identities
\cite{HouSun2026}; their target gamma quotients are different.  The new
structural ingredient here is the balanced-quotient reduction of
Section~\ref{sec:balanced}.  The branch-specific ingredients are the
diagonal and symmetric slices $(t,t,2t,t)$ and $(t+s,t-s,2t,t)$, the exact
functionals $\Delta_r$, and the joint use of all three coefficient families
in Au's Example~IV.  Thus the paper is not a concatenation of isolated evaluations: it combines one
balanced-quotient reduction, three branch-specific WZ constructions,
twenty-eight exact derivative or zeroth-order evaluations, and four source
corrections.

\section{Why a direct beta-integral treatment does not organize these cases}
A beta-integral or log-sine expansion is effective for low-weight inverse
central-binomial sums and is the method used for the companion $f_1$ case
\cite{Patel2026}.  It does not, however, place the present identities
manifestly in the ordinary MZV algebra.  Even elementary inverse-central-
binomial sums naturally produce sixth-root periods; for example the
systematic framework of Borwein, Broadhurst, and Kamnitzer involves
multiple Clausen values and polylogarithms at sixth roots of unity
\cite{BBK2001}.  At weights eight and nine, a direct integral expansion
would therefore create a large cyclotomic expression and leave the decisive
cancellation to ordinary MZVs invisible.

The WZ transformations used below build that descent into the identity.
After coefficient extraction, their simple sides are finite ordinary-MZV
combinations of controlled weight.  Exact rational certificates then prove
the reductions.  Numerical recognition is not an alternative proof and is
particularly unsafe here: three substantive coefficients and one function
label in the original conjecture list require correction.

\section{Balanced gamma quotients}\label{sec:balanced}

\begin{definition}
Let $a_i\in\mathbb Z_{>0}$ and $e_i\in\mathbb Z$.  A gamma quotient
\[
 \mathcal G(x)=R(x)\prod_{i=1}^s\Gamma(a_i x)^{e_i},
\]
with $R$ rational, is \emph{balanced} if
\[
 \chi_1:=\sum_{i=1}^s e_i a_i=0.
\]
Its characteristic power sums are
\begin{equation}\label{eq:chi-def}
 \chi_m=\sum_{i=1}^s e_i a_i^m\qquad(m\ge1).
\end{equation}
\end{definition}

\begin{lemma}[Universal translation prefactor]\label{lem:balanced-prefactor}
For every positive integer $n$ and $u$ near zero,
\begin{equation}\label{eq:balanced-factor}
 \mathcal G(n+u)=C_{\mathcal G}(u)K_{\mathcal G}(n,u),
\end{equation}
where
\begin{align}
 C_{\mathcal G}(u)&=\prod_{i=1}^s\Gamma(1+a_i u)^{e_i},\label{eq:CG}\\
 K_{\mathcal G}(n,u)&=R(n+u)
 \prod_{i=1}^s\left(
 \frac{(1+a_i u)_{a_i n}}{a_i(n+u)}\right)^{e_i}.\label{eq:KG}
\end{align}
If the quotient is balanced, then
\begin{equation}\label{eq:log-general-C}
 \boxed{\ \log C_{\mathcal G}(u)=
 \sum_{m=2}^{\infty}(-1)^m\frac{\chi_m}{m}\zeta(m)u^m\ }.
\end{equation}
\end{lemma}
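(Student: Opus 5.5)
The plan has two parts: first verify the factorization \eqref{eq:balanced-factor} one gamma factor at a time, then read off \eqref{eq:log-general-C} from the Weierstrass expansion of $\log\Gamma(1+z)$.

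For the factorization it suffices to treat a single factor $\Gamma(a(n+u))$ with $a\in\mathbb Z_{>0}$ and $n\ge1$. Write $a(n+u)=an+au$ and use the recurrence $\Gamma(z+N)=(z)_N\Gamma(z)$ with $z=1+au$ and $N=an-1$:
\[
\Gamma(an+au)=\Gamma(1+au)\,(1+au)_{an-1}.
\]
Since
\[
(1+au)_{an}=(1+au)_{an-1}\,(an+au)=(1+au)_{an-1}\,a(n+u),
\]
we get
\[
\Gamma(a(n+u))=\Gamma(1+au)\,\frac{(1+au)_{an}}{a(n+u)}.
\]
Raising this to the power $e_i$, taking the product over $i$ and multiplying by $R(n+u)$ gives \eqref{eq:CG}--\eqref{eq:KG}. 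This holds for all integers $e_i$, including negative ones, as long as $u$ is small enough that no Pochhammer factor or $\Gamma(1+a_iu)$ vanishes or has a pole. Both conditions hold for $|u|<1/\max_i a_i$.

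For the logarithm, use the classical expansion, valid for $|z|<1$:
\[
\log\Gamma(1+z)=-\gamma z+\sum_{m\ge2}\frac{(-1)^m\zeta(m)}{m}z^m.
\]
This follows by integrating $\psi(1+z)=-\gamma+\sum_{m\ge2}(-1)^m\zeta(m)z^{m-1}$, or directly from the Weierstrass product. Substitute $z=a_iu$ with $|u|<1/\max a_i$, multiply by $e_i$ and sum over $i$. The linear term becomes $-\gamma u\sum_i e_ia_i=-\gamma\chi_1u$, which vanishes exactly because the quotient is balanced. In the coefficient of $u^m$ for $m\ge2$, the sum $\sum_i e_ia_i^m$ is $\chi_m$ by \eqref{eq:chi-def}. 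Each series converges absolutely, so they may be added termwise, and $\log C_{\mathcal G}$ is the branch with $\log C_{\mathcal G}(0)=0$.

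No step is a real obstacle. The only points needing care are the Pochhammer index shift, so that the denominator $a_i(n+u)$ comes out exactly as in \eqref{eq:KG}, and noting that balance is used precisely to cancel the Euler--Mascheroni term.
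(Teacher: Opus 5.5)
Your proposal is correct and is essentially the paper's own argument: the per-factor identity $\Gamma(a_i(n+u))=\Gamma(1+a_iu)(1+a_iu)_{a_in}/(a_i(n+u))$ gives the factorization, and the expansion of $\log\Gamma(1+z)$ with the Euler--Mascheroni term cancelled by $\chi_1=0$ gives the boxed formula. Your explicit Pochhammer index shift and the radius $|u|<1/\max_i a_i$ are details the paper leaves implicit.
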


\begin{proof}
The identity
$\Gamma(a_i(n+u))=\Gamma(1+a_i u)(1+a_i u)_{a_i n}/(a_i(n+u))$
gives \eqref{eq:balanced-factor}.  Now use
\[
 \log\Gamma(1+z)=-\gamma z+
 \sum_{m\ge2}(-1)^m\frac{\zeta(m)}m z^m.
\]
The Euler--Mascheroni term has coefficient $\chi_1$ and vanishes exactly
under the balance condition.
\end{proof}

\begin{proposition}[Exponential base and normal convergence]\label{prop:base}
The exponential base of the pure gamma quotient is
\begin{equation}\label{eq:base}
 \mathscr B=\prod_{i=1}^s a_i^{a_i e_i};
\end{equation}
more precisely, Stirling's formula gives
\[
 \prod_i\Gamma(a_i n)^{e_i}
 =\mathscr C\,\mathscr B^n n^\kappa\bigl(1+O(n^{-1})\bigr)
\]
for constants $\mathscr C\ne0$ and $\kappa$.  Hence, if
$|\mathscr B|<1$ and $R$ has at most polynomial growth, the translated
series and every fixed $u$-derivative converge locally normally.
\end{proposition}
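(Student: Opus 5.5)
The plan is to apply Stirling's formula factor by factor and then use the balance condition $\chi_1=0$ where it matters. For each $i$,
\[
 \log\Gamma(a_i n)=\bigl(a_i n-\tfrac12\bigr)\log(a_i n)-a_i n+\tfrac12\log(2\pi)+O(n^{-1}).
\]
Multiplying by $e_i$ and summing, we sort the terms by type.
\begin{itemize}
\item The terms linear in $n$ are $n\sum_i e_i a_i\log a_i=n\log\mathscr B$, together with $n\log n\sum_i e_ia_i-n\sum_i e_ia_i=\chi_1(n\log n-n)$.
\item The terms of order $\log n$ are $-\tfrac12\sum_i e_i\log n$. This gives $\kappa=-\tfrac12\sum_i e_i$, and the $\chi_1$ part of the previous item also enters here.
\item The constants are $-\tfrac12\sum_i e_i\log a_i+\tfrac12\log(2\pi)\sum_i e_i$. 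Exponentiating gives $\mathscr C\neq0$.
\end{itemize}
The statement implicitly assumes the balanced setting of this section. Under $\chi_1=0$ the $n\log n$ and $n$ terms vanish, which leaves exactly $\mathscr C\,\mathscr B^n n^\kappa(1+O(n^{-1}))$. Without balance, a factor $(n/e)^{\chi_1 n}$ would survive, so I will state explicitly that balance is being used at this point.

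For normal convergence, I will look at the translated summand $\mathcal G(n+u)=R(n+u)\prod_i\Gamma(a_i(n+u))^{e_i}$ with $u$ in a small closed disc $|u|\le\rho$. I will use Stirling uniformly in the complex variable $z=n+u$, where $\operatorname{Re} z\ge n-\rho$ stays away from the negative axis. This gives
\[
 \prod_i\Gamma(a_i z)^{e_i}=\mathscr C\,\mathscr B^{z}z^{\kappa}\bigl(1+O(|z|^{-1})\bigr)
\]
uniformly, again because the $\chi_1 z\log z$ terms cancel. We have $|\mathscr B^{z}|=|\mathscr B|^{n+\operatorname{Re}u}\le|\mathscr B|^{n-\rho}$ (taking $\mathscr B>0$, which holds since the $a_i$ are positive). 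Also $|z^\kappa|$ and $|R(z)|$ are $O(n^{c})$ for some $c$, and the poles of $R$ avoid the disc for large $n$. Together these give $\sup_{|u|\le\rho}|\mathcal G(n+u)|\le Cn^{c}|\mathscr B|^{n}$, which is summable. Hence the translated series converges locally normally.

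For the derivatives, I will avoid differentiating the asymptotics directly. Each summand is holomorphic on a slightly larger disc $|u|\le2\rho$ and satisfies the same bound there. Cauchy's estimate then gives
\[
 \sup_{|u|\le\rho}\bigl|\partial_u^r\mathcal G(n+u)\bigr|\le r!\,\rho^{-r}\sup_{|u|\le 2\rho}|\mathcal G(n+u)|,
\]
so every fixed derivative series converges normally as well. By Weierstrass's theorem, the sum can also be differentiated termwise.

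The main obstacle is the uniformity step: justifying complex Stirling with a uniform $O(|z|^{-1})$ error on the discs around large $n$, and tracking $\mathscr B^{z}$ rather than $\mathscr B^{n}$. I will handle this with the standard statement that Stirling holds uniformly in any sector $|\arg z|\le\pi-\delta$. Everything else is bookkeeping with $\chi_1=0$.
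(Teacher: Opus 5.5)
Your proposal is correct and follows the paper's proof: apply Stirling factor by factor, cancel the $n\log n$ and linear terms because their coefficient is $\chi_1=0$, identify the surviving exponential term as $n\log\mathscr B$, and conclude with a geometric majorant. The only difference is in handling the derivatives. The paper just notes that $u$-derivatives contribute powers of $\log n$ and of $n^{-1}$, whereas you use a uniform complex Stirling bound on a larger disc together with Cauchy's estimate, which is a cleaner way to get the same majorant (one harmless aside: no $\chi_1$ term enters at order $\log n$, so $\kappa=-\tfrac12\sum_i e_i$ holds independently of balance).
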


\begin{proof}
In Stirling's formula the $n\log n$ and linear exponential terms have
coefficient $\chi_1$ and cancel.  The remaining exponential term is
$n\sum_i e_i a_i\log a_i= n\log\mathscr B$.  Parameter derivatives add
only powers of $\log n$ and inverse powers of $n$, so a geometric majorant
proves local normal convergence.
\end{proof}

Write $C_{\mathcal G}(u)=\sum_{j\ge0}c_j u^j$ and, whenever the series is
normally convergent, set
\[
 V_r=\sum_{n\ge1}[u^r]K_{\mathcal G}(n,u).
\]
Then the derivative sum obeys the universal convolution
\begin{equation}\label{eq:general-convolution}
 \frac1{r!}\sum_{n\ge1}\mathcal G^{(r)}(n)
 =\sum_{j=0}^r c_jV_{r-j}.
\end{equation}
Thus the characteristic sequence $(\chi_m)$ determines the entire gamma
part of the calculation; the remaining problem is to place the kernel
coefficients $[u^r]K_{\mathcal G}(n,u)$ in a suitable WZ coefficient space.

\begin{table}[ht]
\centering
\caption{Characteristic data for the families in Sun's Section~4.}
\label{tab:characteristic}
\small
\begin{tabular}{@{}lllll@{}}
\toprule
family & exponent data $(a_i;e_i)$ & $\chi_1$ & $\chi_m$ & $\mathscr B$\\
\midrule
$f_1,f_2,f_3$ & $(1;2),(2;-1)$ & $0$ & $2-2^m$ & $1/4$\\
$f_4,f_5$ & $(1;3),(3;-1)$ & $0$ & $3-3^m$ & $1/27$\\
$f_6$ & $(1;16),(2;-10),(4;1)$ & $0$ & $4^m-10\cdot2^m+16$ & $1/4096$\\
\bottomrule
\end{tabular}
\end{table}
For $f_6$, the factor $\Gamma(4x-1)$ is first written as
$\Gamma(4x)/(4x-1)$.  The third row therefore shows, before any WZ
construction, why the convergence rate is $4096^{-n}$.

\begin{proposition}[Conditional WZ reduction principle]\label{prop:wz-principle}
Suppose that for $0\le r\le R$ each target coefficient
$[u^r]K_{\mathcal G}(n,u)$ is an exact rational linear combination of
homogeneous degree-$d_r$ coefficients of a parameterized WZ identity, and
that the corresponding transformed coefficients are ordinary MZVs of
weight $d_r+2$.  Then every derivative sum through order $R$ is an exact
ordinary-MZV combination obtained from \eqref{eq:general-convolution}.
The only computer-assisted obligations are the finite span certificates
and the exact MZV reductions.  The same conclusion holds after applying a fixed finite-order derivative
in auxiliary parameters, provided the resulting prefactor coefficients lie
in the ordinary-zeta algebra and the resulting kernel coefficients satisfy
the same exact WZ-span hypothesis.
\end{proposition}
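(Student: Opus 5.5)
The plan is to read the statement as a bookkeeping consequence of the convolution \eqref{eq:general-convolution} and Lemma~\ref{lem:balanced-prefactor}, and to check that every term on the right of \eqref{eq:general-convolution} is an ordinary MZV combination of the predicted weight. First I will justify \eqref{eq:general-convolution} itself. By Proposition~\ref{prop:base}, the series $\sum_n \mathcal G(n+u)$ converges locally normally in $u$ near $0$. Hence it defines a holomorphic function whose Taylor coefficients can be computed termwise, so $\frac1{r!}\sum_n\mathcal G^{(r)}(n)=[u^r]\sum_n C_{\mathcal G}(u)K_{\mathcal G}(n,u)$. Since $C_{\mathcal G}$ does not depend on $n$, Cauchy multiplication of its Taylor series against $\sum_n K_{\mathcal G}(n,u)$ gives $\sum_{j}c_jV_{r-j}$. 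Interchanging $[u^{r-j}]$ with $\sum_n$ is legitimate by the same normal convergence.

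Next I will handle the gamma part. By \eqref{eq:log-general-C}, $\log C_{\mathcal G}(u)$ has coefficients $(-1)^m\chi_m\zeta(m)/m$, with each $\chi_m$ an integer. Exponentiating, $c_j$ is a rational polynomial in $\zeta(2),\dots,\zeta(j)$, homogeneous of weight $j$ when $\zeta(m)$ is given weight $m$. Products of single zetas are ordinary MZVs by the stuffle product, so $c_j$ lies in the ordinary MZV algebra with weight exactly $j$.

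Then comes the kernel part. The hypothesis gives $[u^r]K_{\mathcal G}(n,u)=\sum_\ell q_{r,\ell}\,A_{\ell}(n)$, a finite combination with rational $q_{r,\ell}$ of degree-$d_r$ coefficients $A_\ell(n)$ of the WZ identity. Summing over $n$, which is allowed because each summand series converges, gives $V_r=\sum_\ell q_{r,\ell}\sum_n A_\ell(n)$. The WZ identity equates each inner sum with the corresponding transformed coefficient, and by hypothesis that coefficient is an ordinary MZV combination of weight $d_r+2$. Therefore $V_{r-j}$ is an ordinary MZV combination, and $c_jV_{r-j}$ is one as well by closure of the MZV algebra under products. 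The conclusion follows for every $r\le R$. The only nonformal inputs are the rational identities $q_{r,\ell}$ (the span certificates) and the identification of the transformed coefficients as MZVs, which are exactly the stated computer-assisted obligations. For the auxiliary-parameter version, I will apply a fixed finite-order parameter derivative to $C_{\mathcal G}K_{\mathcal G}$ using Leibniz's rule. Each resulting term is a prefactor coefficient times a kernel coefficient, and the two assumed hypotheses play the roles of the two steps above.

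I expect the main obstacle to be analytic rather than algebraic. The term-by-term extraction of WZ coefficients needs convergence of each individual coefficient series $\sum_n A_\ell(n)$ and of the transformed side, not just of their combination. My first approach is to require, as part of the WZ-identity hypothesis, that the WZ identity holds as an identity of analytic functions in the parameters near the origin with locally normal convergence on both sides. Coefficient extraction then commutes with summation by Weierstrass's theorem. The geometric majorant of Proposition~\ref{prop:base} supplies this on the target side whenever $|\mathscr B|<1$.
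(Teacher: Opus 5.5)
Your proposal is correct and follows the paper's own argument: extract the WZ coefficients termwise, sum the rational combination on the transformed side, and recombine through \eqref{eq:general-convolution}, with each $c_j$ a polynomial in ordinary zeta values by Lemma~\ref{lem:balanced-prefactor}. The extra steps you supply only make explicit what the paper leaves implicit (compare Lemma~\ref{lem:Aucoeff}): you derive the convolution from Proposition~\ref{prop:base}, and you require the WZ identity to hold with locally normal convergence near the origin so that coefficient extraction commutes with summation.
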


\begin{proof}
Extract the indicated WZ coefficients termwise, sum the exact rational
linear combination on the transformed side, and then apply
\eqref{eq:general-convolution}.  Lemma~\ref{lem:balanced-prefactor} expresses
each $c_j$ as a polynomial in ordinary zeta values, so the result remains
in the ordinary-MZV algebra.
\end{proof}
\section{The quadratic branch: a common gamma deformation}\label{sec:quadratic}

This is the instance $\chi_m=2-2^m$ of
Section~\ref{sec:balanced}.  In particular,
Proposition~\ref{prop:base}, with $\mathscr B=1/4$, gives local normal
convergence of the translated gamma series and all fixed-order derivatives
used below.

For an integer $p\geq0$ and parameters $t,\beta$ near the origin, set
\begin{equation}\label{eq:mother}
 \mathcal M_p(t,\beta)
 =\sum_{n=1}^{\infty}
 \frac{\Gamma(n+t)^4\Gamma(1+\beta)\Gamma(1-\beta)}
 {2(n+t)^p\Gamma(2n+2t)\Gamma(n+t+\beta)\Gamma(n+t-\beta)}.
\end{equation}
Define
\begin{equation}\label{eq:rho}
 \rho(x,\beta)=
 \frac{\Gamma(x)^2\Gamma(1+\beta)\Gamma(1-\beta)}
 {\Gamma(x+\beta)\Gamma(x-\beta)}.
\end{equation}
The first $\beta$-derivative of $\log\rho(x,\beta)$ vanishes at $\beta=0$, while
\begin{equation}\label{eq:rho-second}
 \left.\frac{\partial^2}{\partial \beta^2}\log\rho(x,\beta)\right|_{\beta=0}
 =2\bigl(\zeta(2)-\psi_1(x)\bigr).
\end{equation}
Since $\rho(x,0)=1$, the three functions in \eqref{eq:f123} occur as
\begin{equation}\label{eq:mother-slices}
\begin{aligned}
 \mathcal M_0(t,0)&=\sum_{n\geq1}f_1(n+t),\\
 \left.\frac12\partial_\beta^2\mathcal M_1(t,\beta)\right|_{\beta=0}
 &=\sum_{n\geq1}f_2(n+t),\\
 \mathcal M_3(t,0)&=\sum_{n\geq1}f_3(n+t).
\end{aligned}
\end{equation}
Thus Conjectures~4.1--4.3 are respectively the $p=0$ slice, the quadratic
$\beta$-slice at $p=1$, and the $p=3$ slice of the same deformation.

The normalized finite perturbation used later is directly related to the
mother deformation.  If $B_n(t)$ and $V_n(t,s)$ are defined by
\eqref{eq:B-def} and \eqref{eq:V-def}, respectively, then
\begin{equation}\label{eq:V-rho-link}
 \frac{V_n(t,s)}{B_n(t)}
 =\left(\frac{\rho(1+t,s)}{\rho(n+t,s)}\right)^2.
\end{equation}
Indeed, this follows immediately by writing the gamma quotients as rising
factorials.  Differentiating \eqref{eq:V-rho-link} twice at $s=0$ gives the
finite trigamma tail appearing in Proposition~\ref{prop:symmetric}.  This
makes the symmetric WZ perturbation an explicit coefficient realization of
the $\beta$-slice of \eqref{eq:mother}, rather than a separate construction.

\section{Au's four-parameter WZ identity}

For $z\in\mathbb C$ and an integer $k\geq0$, write
$(z)_k=z(z+1)\cdots(z+k-1)$, with $(z)_0=1$.  Define
\begin{equation}\label{eq:L-def}
 \mathcal L(a,b,c,d)
 =\sum_{k=0}^{\infty}
  \frac{(1+a)_k(1+b)_k}
       {(1+c)_{k+1}(1+d)_{k+1}}.
\end{equation}
The identity below arises from the Gauss ${}_2F_1$ WZ seed in Au's
seed formalism \cite[Section~3]{AuSeeds2025}.  In the four-parameter form
needed here, Au's identity \cite[Example~I, p.~8]{Au2023} states that, whenever
$\Re(a+b-c-d)<1$,
\begin{equation}\label{eq:au-identity}
 \mathcal L(a,b,c,d)=\sum_{n=1}^{\infty}\mathcal R_n(a,b,c,d),
\end{equation}
where
\begin{equation}\label{eq:R-def}
 \mathcal R_n
 =\frac{(1+a)_{n-1}(1+b)_{n-1}(1-a+c)_{n-1}(1-b+c)_{n-1}}
 {(1+c)_{2n}(1+d)_n(1-a-b+c+d)_n}\,P_n
\end{equation}
and
\begin{equation}\label{eq:P-def}
 P_n=ab-ac-an-bc-bn+c^2+cd+3cn+2dn+3n^2.
\end{equation}
The identity is proved there by an explicit WZ pair.  Both
sides are holomorphic near the origin, so Taylor coefficients may be
compared termwise.

For a homogeneous polynomial
\[
 \Delta(\opA,\opB,\opC,\opD)=\sum_{i,j,k,\ell}\delta_{i,j,k,\ell}
 \opA^i\opB^j\opC^k\opD^\ell,
\]
define
\begin{equation}\label{eq:functional}
 \Coeff_{\Delta}F
 =\sum_{i,j,k,\ell}\delta_{i,j,k,\ell}
 [a^ib^jc^kd^\ell]F(a,b,c,d).
\end{equation}
The coefficient functionals $\Delta_0,\ldots,\Delta_5$ used for the
$f_3$ branch are listed in Appendix~\ref{app:operators}.

\section{The inverse-cubic branch}

Put
\begin{equation}\label{eq:Q-def}
 Q(t)=\frac{\Gamma(1+t)^2}{\Gamma(1+2t)}
\end{equation}
and
\begin{equation}\label{eq:B-def}
 B_n(t)=\frac{f_3(n+t)}{Q(t)}
 =\frac{(1+t)_{n-1}^4}
 {(1+2t)_{2n}(1+t)_n^2}.
\end{equation}
Set
\begin{equation}\label{eq:E-G-def}
 E(t)=\sum_{n=1}^{\infty}B_n(t),
 \qquad
 G(t)=\sum_{n=1}^{\infty}f_3(n+t)=Q(t)E(t).
\end{equation}
The series are holomorphic for $t$ near zero and may be differentiated
termwise.

\subsection{The coefficient-matching construction}
This subsection contains the principal finite construction of the proof.
Let
\[
 \mathscr R=\Q[u,u^{-1},X_1,\ldots,X_7,Y_1,\ldots,Y_7]
\]
and let $V_r$ be the $\Q$-space of homogeneous polynomials of degree
$r+2$ in $\opA,\opB,\opC,\opD$.  The normalized WZ summand defines a
linear map
\begin{equation}\label{eq:Phi-r}
 \Phi_r:V_r\longrightarrow\mathscr R,
 \qquad
 \Phi_r(\Delta)=\Coeff_\Delta\!\left(P_ne^{\Lambda_n}\right),
\end{equation}
where only total parameter degree $r+2$ is required.  The translated
gamma quotient supplies the target
\[
 T_r=[t^r]e^{\Theta_n(t)}\in\mathscr R.
\]
Thus the required functional is precisely a solution of
\begin{equation}\label{eq:matching-system}
 \Phi_r(\Delta_r)=T_r.
\end{equation}

To construct this system, begin with
\[
 \Delta=\sum_{i+j+k+\ell=r+2}
 \delta_{i,j,k,\ell}\opA^i\opB^j\opC^k\opD^\ell.
\]
Write $\Lambda_n=\sum_{m\ge1}L_m$, where $L_m$ is homogeneous of total
parameter degree $m$, and compute the homogeneous pieces of
$e^{\Lambda_n}$ by the recurrence \eqref{eq:exp-recurrence}.  Compute
$T_r$ by the same Bell-polynomial recurrence.  Expanding
$\Phi_r(\Delta)-T_r$ in the monomial basis of $\mathscr R$ and equating
all coefficients to zero produces a finite linear system over $\Q$ in
the unknowns $\delta_{i,j,k,\ell}$.  Rational row reduction shows that
$T_r$ lies in the image of $\Phi_r$ for every $0\le r\le5$.  The systems
are generally underdetermined; choosing free variables to obtain short
support gives the sparse representatives in Appendix~\ref{app:operators}.
Non-uniqueness is immaterial, because any preimage in
\eqref{eq:matching-system} yields the same coefficient identity.

\begin{table}[ht]
\centering
\caption{Size of the coefficient-matching problems and the chosen sparse solutions.}
\label{tab:operator-construction}
\begin{tabular}{c@{\qquad}c@{\qquad}c}
\toprule
$r$ & $\dim V_r=\binom{r+5}{3}$ & number of nonzero terms in $\Delta_r$\\
\midrule
0&10&4\\
1&20&5\\
2&35&10\\
3&56&10\\
4&84&19\\
5&120&20\\
\bottomrule
\end{tabular}
\end{table}

Appendix~\ref{app:operators} records the selected solutions.  Certificate~A
reconstructs $\Phi_r(\Delta_r)-T_r$ in the free ring $\mathscr R$ and
verifies that it is the zero polynomial.  The certificate therefore checks
the mathematical output of the construction independently of the row
reduction or the choice of sparse representative.

\begin{proposition}[Coefficient extraction]\label{prop:f3-extraction}
For every $n\geq1$ and $0\leq r\leq5$,
\begin{equation}\label{eq:f3-termwise}
 [t^r]B_n(t)=\Coeff_{\Delta_r}\mathcal R_n(a,b,c,d).
\end{equation}
Consequently, if $e_r=[t^r]E(t)$, then
\begin{equation}\label{eq:f3-global}
 e_r=\Coeff_{\Delta_r}\mathcal L(a,b,c,d),
 \qquad 0\leq r\leq5.
\end{equation}
\end{proposition}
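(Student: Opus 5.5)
The plan is to prove the termwise identity \eqref{eq:f3-termwise} first, and then sum it over $n$ to get \eqref{eq:f3-global}.

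\textbf{Step 1: common normalization.} Both sides of \eqref{eq:f3-termwise} are written in a common form.

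On the WZ side, restrict $\mathcal R_n(a,b,c,d)$ to the regime where only coefficients of total degree $r+2$ matter. Here $P_n$ is homogeneous of degree $2$ in $(a,b,c,d)$ apart from the $3n^2$ term, once one tracks degrees. Each Pochhammer symbol $(1+z)_m$ is written as $(m)!\,\exp\bigl(\sum_{k\ge1}(-1)^{k-1}z^kH_m^{(k)}/k\bigr)$, where $z$ is a linear form in the parameters. Then
\[
\mathcal R_n=(\text{rational prefactor in }n)\cdot P_n\,e^{\Lambda_n},
\]
with $\Lambda_n=\sum_m L_m$. Each $L_m$ is a homogeneous degree-$m$ polynomial in $a,b,c,d$ whose coefficients are $\Q$-linear in harmonic numbers.

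These harmonic numbers are identified with the generators of $\mathscr R$: $u=n$ (or $1/n$), $X_k=H_{n-1}^{(k)}$, $Y_k=H_{2n}^{(k)}$, or an equivalent choice. The ranges $(1)_{n-1},(1)_n,(1)_{2n}$ reduce to these generators, with the difference between $H_n$ and $H_{n-1}$ absorbed by powers of $u^{-1}$.

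The same is done for $B_n(t)=(1+t)_{n-1}^4/\bigl((1+2t)_{2n}(1+t)_n^2\bigr)$. The result is $B_n(t)=(\text{same rational prefactor})\cdot e^{\Theta_n(t)}$, with $\Theta_n$ written in the same generators. The one thing to check by hand is that the $t$-independent prefactor matches the $a=b=c=d=0$ value of the WZ prefactor, so that both sides carry the common factor $\frac{(n-1)!^4}{(2n)!\,n!^2}$.

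\textbf{Step 2: reduce to the matching system.} Because the generators are \emph{algebraically independent formal symbols}, any polynomial identity in $\mathscr R$ specializes to a true identity for every $n\ge1$. So \eqref{eq:f3-termwise} reduces to \eqref{eq:matching-system}, namely $\Phi_r(\Delta_r)=T_r$ in $\mathscr R$, with the listed $\Delta_r$ from Appendix~\ref{app:operators}.

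Each side is a finite computation:
\begin{itemize}
\item the homogeneous pieces of $e^{\Lambda_n}$ come from the Bell recurrence truncated at degree $r+2\le7$;
\item since $r+2\le 7$, only $L_1,\dots,L_7$ are needed, which is why seven $X$'s and $Y$'s suffice;
\item $T_r$ comes from the same recurrence in $t$.
\end{itemize}
Equality is then a finite zero-test of a polynomial with rational coefficients, which is Certificate~A. This is the main obstacle: not conceptually, but in setting up the bookkeeping correctly (the degree shift by $2$ from $P_n$, and the mixed $n$-ranges) so that the certificate checks exactly the stated identity.

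\textbf{Step 3: sum over $n$.} For the global statement, $E(t)=\sum_n B_n(t)$ converges locally normally by Proposition~\ref{prop:base} with $\mathscr B=1/4$. Similarly, \eqref{eq:au-identity} holds with both sides holomorphic near the origin. The right side converges normally near $0$ because its terms are also $O(4^{-n}n^{\kappa})$ uniformly for small parameters.

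Hence Taylor coefficients can be extracted termwise:
\[
e_r=\sum_n[t^r]B_n=\sum_n\Coeff_{\Delta_r}\mathcal R_n=\Coeff_{\Delta_r}\sum_n\mathcal R_n=\Coeff_{\Delta_r}\mathcal L,
\]
using that $\Coeff_{\Delta_r}$ is a finite linear combination of Cauchy-integral coefficient functionals, which commute with normally convergent sums.
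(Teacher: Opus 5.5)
Your proposal is correct and is essentially the paper's proof: divide both sides by $B_n(0)=\frac{(n-1)!^4}{(2n)!\,n!^2}$, reduce \eqref{eq:f3-termwise} to the formal identity \eqref{eq:formal-comparison} in $\Q[u,u^{-1},X_1,\ldots,X_7,Y_1,\ldots,Y_7][a,b,c,d]$ (Certificate~A), specialize $u\mapsto 1/n$, $X_m\mapsto H_{n-1}^{(m)}$, $Y_m\mapsto H_{2n}^{(m)}$, and then sum over $n$ using normal convergence and \eqref{eq:au-identity}. Two small slips, both handled automatically once $n=u^{-1}$ is placed in the coefficient ring: $P_n$ is homogeneous of degree $2$ in $(a,b,c,d,n)$ jointly, so besides the constant $3n^2$ it also has the parameter-linear part $(-a-b+3c+2d)n$; and with $u=1/n$ one has $H_n^{(m)}=X_m+u^m$, not a power of $u^{-1}$.
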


\begin{proof}
Write
\[
 X_m=H_{n-1}^{(m)},\qquad Y_m=H_{2n}^{(m)},\qquad u=\frac1n.
\]
After dividing the Pochhammer quotient in \eqref{eq:R-def} by its value
at the origin, its logarithm is
\begin{align}
 \Lambda_n(a,b,c,d)
 ={}&\sum_{m\geq1}\frac{(-1)^{m+1}}m
 \Bigl[X_m\bigl(a^m+b^m+(c-a)^m+(c-b)^m\bigr)-Y_mc^m \notag\\
 &\hspace{28mm}-(X_m+u^m)\bigl(d^m+(c+d-a-b)^m\bigr)\Bigr].
 \label{eq:Lambda}
\end{align}
Thus
\begin{equation}\label{eq:R-normalized}
 \frac{\mathcal R_n(a,b,c,d)}{B_n(0)}
 =P_n\exp\bigl(\Lambda_n(a,b,c,d)\bigr).
\end{equation}
On the other hand,
\begin{equation}\label{eq:Theta}
 \frac{B_n(t)}{B_n(0)}=\exp\bigl(\Theta_n(t)\bigr),
\end{equation}
where
\begin{equation}\label{eq:Theta-expansion}
 \Theta_n(t)=
 \sum_{m\geq1}\frac{(-1)^{m+1}}m
 \bigl(2X_m-2^mY_m-2u^m\bigr)t^m.
\end{equation}
Substitution of the polynomials in Appendix~\ref{app:operators} gives
\begin{equation}\label{eq:formal-comparison}
 \Coeff_{\Delta_r}\left(P_ne^{\Lambda_n}\right)
 =[t^r]e^{\Theta_n(t)},
 \qquad 0\leq r\leq5.
\end{equation}
This is an identity in
$\Q[u,u^{-1},X_1,\ldots,X_7,Y_1,\ldots,Y_7][a,b,c,d]$.
Specializing $u\mapsto1/n$, $X_m\mapsto H_{n-1}^{(m)}$, and
$Y_m\mapsto H_{2n}^{(m)}$ yields the asserted identity for every
$n\ge1$.  The finite verification writes
$\Lambda_n=\sum_{m\geq1}L_m$, with $L_m$ homogeneous of degree $m$, and
uses
\begin{equation}\label{eq:exp-recurrence}
 U_0=1,
 \qquad
 jU_j=\sum_{m=1}^{j}mL_mU_{j-m},
 \qquad e^{\Lambda_n}=\sum_{j\geq0}U_j.
\end{equation}
Only $U_0,\ldots,U_7$ are needed.  The ancillary file \path{verify_coefficients_43.py} contains
the complete verifier.  It implements the coefficient ring as sparse
Laurent polynomials over $\Q$ in the formal variables
$u,X_1,\ldots,X_7,Y_1,\ldots,Y_7$ and implements the parameter variables
$a,b,c,d$ as a second sparse polynomial layer.  Thus equality is tested by
literal equality of two finite rational-coefficient dictionaries, not by
sampling or simplification heuristics.  The verifier returns the zero
polynomial for each $0\leq r\leq5$, establishing
\eqref{eq:formal-comparison}, and hence \eqref{eq:f3-termwise}.  Summing
\eqref{eq:f3-termwise} and applying \eqref{eq:au-identity} gives
\eqref{eq:f3-global}.
\end{proof}

\section{The trigamma branch}

Define
\begin{equation}\label{eq:A-B-T}
 A_n(t)=\frac{(1+t)_{n-1}^2}{(1+2t)_{2n}},
 \qquad
 \mathcal T_n(t)=\sum_{j=1}^{n-1}\frac1{(j+t)^2},
\end{equation}
and
\begin{equation}\label{eq:A-W}
 A(t)=\sum_{n=1}^{\infty}A_n(t),
 \qquad
 W(t)=\sum_{n=1}^{\infty}A_n(t)\mathcal T_n(t).
\end{equation}
Thus the subscript $n$ records the outer summation index, while the finite
trigamma tail itself terminates at $n-1$.
Notice that
\begin{equation}\label{eq:q-normalized}
 \frac{\Gamma(n+t)^2}{2(n+t)\Gamma(2n+2t)}=Q(t)A_n(t).
\end{equation}

\begin{proposition}[Symmetric perturbation]\label{prop:symmetric}
For $t$ near zero,
\begin{align}
 A(t)&=\frac13\mathcal L(t,t,2t,t), \label{eq:A-specialization}\\
 W(t)&=-\frac1{12}
 \left.\frac{\partial^2}{\partial s^2}
 \mathcal L(t+s,t-s,2t,t)\right|_{s=0}
 -\frac16E(t). \label{eq:W-specialization}
\end{align}
\end{proposition}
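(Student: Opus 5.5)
The plan is to specialize Au's identity \eqref{eq:au-identity} termwise, compute each summand $\mathcal R_n$ on the two slices, and recognize $A_n$, $B_n$ and $\mathcal T_n$ in the result. Both slices lie in the region of validity: for $(t,t,2t,t)$ and $(t+s,t-s,2t,t)$ we have $a+b-c-d=-t$, so $\Re(a+b-c-d)<1$ for small $t,s$. Both sides of \eqref{eq:au-identity} are holomorphic near the origin. The right-hand series converges locally uniformly there, since its terms decay like $4^{-n}$ up to polynomial factors, as in Proposition~\ref{prop:base}. Hence we may set parameters and differentiate in $s$ under the summation sign.

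\emph{Diagonal slice.} For $(a,b,c,d)=(t,t,2t,t)$ the shifted parameters satisfy
\[
1-a+c=1-b+c=1-a-b+c+d=1+t.
\]
So the Pochhammer quotient in \eqref{eq:R-def} becomes $(1+t)_{n-1}^4/\bigl((1+2t)_{2n}(1+t)_n^2\bigr)=B_n(t)$. Substituting into \eqref{eq:P-def} gives
\[
P_n=t^2-4t^2-2tn+6t^2+8tn+3n^2=3(n+t)^2.
\]
Using $(1+t)_n=(n+t)(1+t)_{n-1}$, this yields $\mathcal R_n=3A_n(t)$. Summing over $n$ gives \eqref{eq:A-specialization}.

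\emph{Symmetric slice.} For $(a,b,c,d)=(t+s,t-s,2t,t)$ the shifts are $1-a+c=1+t-s$, $1-b+c=1+t+s$ and $1-a-b+c+d=1+t$. The Pochhammer factor is therefore
\[
V_n(t,s)=\frac{(1+t+s)_{n-1}^2(1+t-s)_{n-1}^2}{(1+2t)_{2n}(1+t)_n^2}.
\]
The same substitution gives
\[
P_n=(t^2-s^2)-4t^2-2tn+6t^2+8tn+3n^2=3(n+t)^2-s^2.
\]
The summand is even in $s$, so only the $s^2$ coefficient is needed. Expanding $\log(j+t+s)+\log(j+t-s)=2\log(j+t)-s^2/(j+t)^2+O(s^4)$ and summing over $j=1,\dots,n-1$ gives
\[
V_n(t,s)=B_n(t)\bigl(1-2\mathcal T_n(t)s^2+O(s^4)\bigr).
\]
This is consistent with \eqref{eq:V-rho-link}. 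Consequently
\[
[s^2]\mathcal R_n=B_n(t)\bigl(-6(n+t)^2\mathcal T_n(t)-1\bigr)=-6A_n(t)\mathcal T_n(t)-B_n(t),
\]
again using $B_n(t)(n+t)^2=A_n(t)$. It follows that
\[
\partial_s^2\mathcal R_n|_{s=0}=-12A_n\mathcal T_n-2B_n.
\]
Summing over $n$ and using \eqref{eq:A-W} and \eqref{eq:E-G-def} gives $\partial_s^2\mathcal L(t+s,t-s,2t,t)|_{s=0}=-12W(t)-2E(t)$, which rearranges to \eqref{eq:W-specialization}.

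Every step is elementary algebra. The only point needing care is the analytic justification: we need uniform convergence of $\sum_n\mathcal R_n$ and of its $s$-derivatives on a small polydisc around the origin. I would handle this with the geometric majorant of Proposition~\ref{prop:base}, after noting that the quotient $V_n/B_n$ and its $s$-derivatives grow at most polynomially in $n$. I expect the main place an error could creep in to be the two evaluations of $P_n$, so I would double-check those polynomial simplifications.
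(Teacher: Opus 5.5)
Your proposal is correct and follows the paper's proof essentially step by step. You use the same evaluations $P_n=3(n+t)^2$ and $P_n=3(n+t)^2-s^2$ on the two slices, the same identification of the Pochhammer quotient with $B_n(t)$ and with $V_n(t,s)$, and the same relation $(n+t)^2B_n(t)=A_n(t)$. The only cosmetic difference is that you read off the $s^2$-coefficient $V_n=B_n\bigl(1-2\mathcal T_n(t)s^2+O(s^4)\bigr)$ directly, whereas the paper states the equivalent facts $\partial_s\log V_n|_{s=0}=0$ and $\partial_s^2\log V_n|_{s=0}=-4\mathcal T_n(t)$.
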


\begin{proof}
At $(a,b,c,d)=(t,t,2t,t)$, the polynomial in \eqref{eq:P-def} is
$3(n+t)^2$, while the Pochhammer quotient in \eqref{eq:R-def} is
$B_n(t)$.  Since $(n+t)^2B_n(t)=A_n(t)$, we obtain
\[
 \mathcal R_n(t,t,2t,t)=3A_n(t),
\]
and summation proves \eqref{eq:A-specialization}.

For the second identity, write
\begin{equation}\label{eq:V-def}
 V_n(t,s)=
 \frac{(1+t+s)_{n-1}^2(1+t-s)_{n-1}^2}
 {(1+2t)_{2n}(1+t)_n^2}.
\end{equation}
Then
\begin{equation}\label{eq:R-perturbed}
 \mathcal R_n(t+s,t-s,2t,t)
 =\bigl(3(n+t)^2-s^2\bigr)V_n(t,s).
\end{equation}
At $s=0$, $V_n(t,0)=B_n(t)$.  Moreover,
\begin{equation}\label{eq:V-derivatives}
 \left.\partial_s\log V_n(t,s)\right|_{s=0}=0,
 \qquad
 \left.\partial_s^2\log V_n(t,s)\right|_{s=0}=-4\mathcal T_n(t).
\end{equation}
Consequently,
\begin{equation}\label{eq:R-second}
 \left.\partial_s^2\mathcal R_n(t+s,t-s,2t,t)\right|_{s=0}
 =-2B_n(t)-12A_n(t)\mathcal T_n(t).
\end{equation}
Summing \eqref{eq:R-second} and using \eqref{eq:au-identity} proves
\eqref{eq:W-specialization}.
\end{proof}

Let
\begin{equation}\label{eq:S-def}
 S(t)=\zeta(2)-\psi_1(1+t).
\end{equation}
The trigamma recurrence gives
\begin{equation}\label{eq:trigamma-split}
 \zeta(2)-\psi_1(n+t)=S(t)+\mathcal T_n(t).
\end{equation}
Combining \eqref{eq:q-normalized} and \eqref{eq:trigamma-split}, we obtain
the central identity for the $f_2$ branch.

\begin{corollary}\label{cor:F2-master}
If
\begin{equation}\label{eq:F-def}
 F(t)=\sum_{n=1}^{\infty}f_2(n+t),
\end{equation}
then
\begin{equation}\label{eq:F-master}
 F(t)=Q(t)\bigl(S(t)A(t)+W(t)\bigr).
\end{equation}
\end{corollary}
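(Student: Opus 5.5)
We prove \eqref{eq:F-master} termwise and then sum. The plan is to check that, for each $n\ge1$,
\[
 f_2(n+t)=Q(t)\bigl(S(t)+\mathcal T_n(t)\bigr)A_n(t),
\]
and then sum over $n$. Once this termwise identity holds, summing and using the definitions \eqref{eq:A-W} of $A(t)$ and $W(t)$ gives $F(t)=Q(t)\bigl(S(t)A(t)+W(t)\bigr)$ at once.

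\emph{The termwise identity.} By definition \eqref{eq:f123},
\[
 f_2(n+t)=\bigl(\zeta(2)-\psi_1(n+t)\bigr)\frac{\Gamma(n+t)^2}{2(n+t)\Gamma(2n+2t)}.
\]
The gamma factor equals $Q(t)A_n(t)$ by \eqref{eq:q-normalized}. I would verify this by writing $\Gamma(n+t)=\Gamma(1+t)(1+t)_{n-1}$ and $\Gamma(2n+2t)=\Gamma(1+2t)(1+2t)_{2n-1}$. This gives
\[
 \frac{\Gamma(n+t)^2}{2(n+t)\Gamma(2n+2t)}
 =Q(t)\,\frac{(1+t)_{n-1}^2}{2(n+t)(1+2t)_{2n-1}}.
\]
Since $(1+2t)_{2n}=(1+2t)_{2n-1}(2n+2t)$, the last fraction is exactly $A_n(t)$.

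The trigamma factor is handled by \eqref{eq:trigamma-split}, which follows from iterating $\psi_1(x)=\psi_1(x+1)+x^{-2}$. Iterating $n-1$ times gives $\psi_1(1+t)=\psi_1(n+t)+\sum_{j=1}^{n-1}(j+t)^{-2}$. Hence $\zeta(2)-\psi_1(n+t)=S(t)+\mathcal T_n(t)$.

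\emph{Summation.} It remains to justify splitting $\sum_n Q(t)\bigl(S(t)+\mathcal T_n(t)\bigr)A_n(t)$ into $Q(t)S(t)A(t)+Q(t)W(t)$. This requires both $\sum_n A_n(t)$ and $\sum_n A_n(t)\mathcal T_n(t)$ to converge for $t$ near zero. By Proposition~\ref{prop:base} with $\mathscr B=1/4$, we have $A_n(t)=O(4^{-n}n^{\kappa})$ locally uniformly in $t$. Also $\mathcal T_n(t)$ is bounded by a constant near $t=0$, since $\mathcal T_n(t)\le\sum_{j\ge1}(j-|t|)^{-2}$. Both series therefore converge locally normally, and the splitting is legitimate.

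There is no real obstacle here. The only point needing care is the index bookkeeping: the $(1+2t)_{2n}$ versus $(1+2t)_{2n-1}$ shift, which absorbs the factor $2(n+t)$, and the fact that the trigamma tail stops at $n-1$.
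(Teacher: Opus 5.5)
Your proposal is correct and follows the paper's own route: the paper obtains the corollary by combining the termwise gamma normalization \eqref{eq:q-normalized} with the trigamma split \eqref{eq:trigamma-split} and summing over $n$. You additionally verify both ingredients and justify splitting the sum into the $A(t)$ and $W(t)$ series, which the paper leaves implicit.
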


For later use, put
\begin{equation}\label{eq:Lcoeff}
 L_{i,j,k,\ell}=[a^ib^jc^kd^\ell]\mathcal L(a,b,c,d).
\end{equation}
If $a_r=[t^r]A(t)$ and
\[
 j_r=[t^r]\left.\partial_s^2
 \mathcal L(t+s,t-s,2t,t)\right|_{s=0},
\]
then Proposition~\ref{prop:symmetric} gives the explicit coefficient
formulas
\begin{align}
 a_r&=\frac13\sum_{i+j+k+\ell=r}2^kL_{i,j,k,\ell},
 \label{eq:a-functional}\\
 j_r&=\sum_{i+j+k+\ell=r+2}
 2^k\bigl((i-j)^2-i-j\bigr)L_{i,j,k,\ell},
 \label{eq:j-functional}\\
 [t^r]W(t)&=-\frac1{12}j_r-\frac16e_r.
 \label{eq:c-functional}
\end{align}
Indeed, the coefficient in \eqref{eq:j-functional} is the second
$s$-derivative at zero of
$(t+s)^i(t-s)^j(2t)^kt^\ell$.

Finally,
\begin{equation}\label{eq:S-expansion}
 S(t)=\sum_{m=1}^{\infty}(-1)^{m+1}(m+1)\zeta(m+2)t^m.
\end{equation}
The factor $Q(t)$ in \eqref{eq:Q-def} is the universal prefactor
$C_{\mathcal G}(t)$ for exponent data $(1;2),(2;-1)$.  Thus
$\chi_m=2-2^m$, and Lemma~\ref{lem:balanced-prefactor} gives the
specialization
\begin{equation}\label{eq:logQ}
 \log Q(t)=\sum_{m=2}^{\infty}
 \frac{(-1)^m(2-2^m)}m\zeta(m)t^m.
\end{equation}
Equations \eqref{eq:F-master} and
\eqref{eq:a-functional}--\eqref{eq:logQ} therefore give a finite
coefficient algorithm for every fixed Taylor order of $F$ for which the
corresponding $e_r$ has been extracted.

\section{Conversion to multiple zeta values}

For a composition $\boldsymbol{s}=(s_1,\ldots,s_d)$, define the finite
strict and star sums
\begin{align*}
 \mathsf H_N(\boldsymbol{s})
 &=\sum_{N\geq n_1>\cdots>n_d\geq1}
   \frac1{n_1^{s_1}\cdots n_d^{s_d}},\\
 \mathsf H_N^\star(\boldsymbol{s})
 &=\sum_{N\geq n_1\geq\cdots\geq n_d\geq1}
   \frac1{n_1^{s_1}\cdots n_d^{s_d}}.
\end{align*}
The empty index has value $1$.  Expanding the Pochhammer symbols in
\eqref{eq:L-def} gives
\begin{align}
 \frac{(1+a)_k}{k!}
 &=\sum_{i\geq0}a^i\mathsf H_k(\onevec{i}), \label{eq:poc-num}\\
 \frac{(k+1)!}{(1+c)_{k+1}}
 &=\sum_{i\geq0}(-c)^i\mathsf H_{k+1}^\star(\onevec{i}).
 \label{eq:poc-den}
\end{align}
It follows that
\begin{align}
 L_{i,j,k,\ell}
 =(-1)^{k+\ell}\sum_{n=1}^{\infty}\frac{
 \mathsf H_{n-1}(\onevec{i})\mathsf H_{n-1}(\onevec{j})
 \mathsf H_n^\star(\onevec{k})\mathsf H_n^\star(\onevec{\ell})}{n^2}.
 \label{eq:L-coeff-harmonic}
\end{align}
These sums are converted to ordinary MZVs using
\begin{equation}\label{eq:star-compositions}
 \mathsf H_n^\star(\onevec{k})
 =\sum_{\boldsymbol{s}\models k}\mathsf H_n(\boldsymbol{s}),
\end{equation}
the top-index split
\begin{equation}\label{eq:top-split}
 \mathsf H_n(s_1,\ldots,s_d)
 =\mathsf H_{n-1}(s_1,\ldots,s_d)
  +\frac1{n^{s_1}}\mathsf H_{n-1}(s_2,\ldots,s_d),
\end{equation}
and the finite stuffle product.  These manipulations use the standard
multiple-harmonic and quasi-shuffle algebra
\cite{Hoffman1992,Hoffman2000,Hoffman2003,HoffmanOhno2003,BBV2010}.
The remaining outer sums are
\begin{equation}\label{eq:outer-mzv}
 \sum_{n=1}^{\infty}\frac{\mathsf H_{n-1}(s_1,\ldots,s_d)}{n^q}
 =\zeta(q,s_1,\ldots,s_d),
 \qquad q\geq2.
\end{equation}
Thus $L_{i,j,k,\ell}$ is a homogeneous MZV of weight
$i+j+k+\ell+2$.  In particular, $e_r$ and $[t^r]W(t)$ have weight
$r+4$, while $a_r$ has weight $r+2$.  The convolution in
\eqref{eq:F-master} is homogeneous of weight $r+4$.

\begin{proposition}[Exact MZV reductions]\label{prop:mzv-results}
The coefficients $e_r=[t^r]E(t)$ for $0\leq r\leq5$ are
\begin{align}
 e_0&=\frac{17}{36}\zeta(4), \label{eq:e0}\\
 e_1&=-\frac{22}{9}\zeta(5), \label{eq:e1}\\
 e_2&=\frac{1715}{216}\zeta(6), \label{eq:e2}\\
 e_3&=-\frac{1423}{72}\zeta(7)-\frac{17}{18}\zeta(3)\zeta(4),
 \label{eq:e3}\\
 e_4&=\frac{375889}{8640}\zeta(8)+\frac{44}{9}\zeta(3)\zeta(5)
      +\frac{44}{45}\zeta(5,3), \label{eq:e4}\\
 e_5&=-\frac{28301}{324}\zeta(9)-\frac{1715}{108}\zeta(3)\zeta(6)
      -\frac{17}{6}\zeta(4)\zeta(5). \label{eq:e5}
\end{align}
Moreover, the function $F$ in \eqref{eq:F-def} satisfies
\begin{equation}\label{eq:F0}
 F(0)=\frac{5}{108}\zeta(4)=\frac{\pi^4}{1944},
\end{equation}
and its derivatives $F^{(r)}(0)$ for $1\leq r\leq5$ are the
right-hand sides of \eqref{eq:f2-1}--\eqref{eq:f2-5}.
\end{proposition}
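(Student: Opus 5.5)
The plan is to reduce every coefficient in the statement to finitely many numbers $L_{i,j,k,\ell}$ and then certify each of those as an ordinary MZV.

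\textbf{Step 1: the coefficients $e_r$.} By Proposition~\ref{prop:f3-extraction},
\[
 e_r=\Coeff_{\Delta_r}\mathcal L=\sum\delta^{(r)}_{i,j,k,\ell}L_{i,j,k,\ell},
\]
with the sparse functionals $\Delta_r$ of Appendix~\ref{app:operators}. This involves at most twenty distinct $L_{i,j,k,\ell}$ of weight $r+4$ for each $r\le5$.

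\textbf{Step 2: reduce each $L_{i,j,k,\ell}$ to MZVs.} Start from \eqref{eq:L-coeff-harmonic}.
\begin{itemize}
\item Expand each star sum $\mathsf H_n^\star(\onevec{k})$ via \eqref{eq:star-compositions} into strict sums.
\item Apply the top-index split \eqref{eq:top-split}, so that every factor is either $\mathsf H_{n-1}(\cdot)$ or a power of $1/n$ times $\mathsf H_{n-1}(\cdot)$.
\item Multiply the resulting four $\mathsf H_{n-1}$ factors by the finite stuffle product into a single linear combination of $\mathsf H_{n-1}(\boldsymbol s)$.
\item Sum over $n$ using \eqref{eq:outer-mzv}. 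Every outer exponent is at least $2$ because of the $1/n^2$ factor, so each term is a convergent MZV $\zeta(q,\boldsymbol s)$.
\end{itemize}
This gives each $e_r$ as an explicit rational combination of MZVs of weight $r+4\le9$. It then remains to reduce these to the stated basis: $\zeta(4)$; $\zeta(5)$; $\zeta(6)$; $\zeta(7),\zeta(3)\zeta(4)$; $\zeta(8),\zeta(3)\zeta(5),\zeta(5,3)$; and so on. For that I would use the double shuffle relations (stuffle together with shuffle of the iterated-integral words) and duality, implemented exactly over $\Q$. The certificate is then a finite rational linear combination of these relations that reduces each expression to the claimed value.

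\textbf{Step 3: $F(0)$ and $F^{(r)}(0)$.}
\begin{itemize}
\item Compute $a_r$ and $j_r$ from \eqref{eq:a-functional}--\eqref{eq:j-functional} by the same reduction. The index sets are finite (weights up to $9$), and \eqref{eq:c-functional} then gives $[t^r]W$.
\item Expand $Q(t)$ from \eqref{eq:logQ} and $S(t)$ from \eqref{eq:S-expansion}.
\item Form $F^{(r)}(0)/r!$ as the Cauchy-product coefficient of $Q(t)\bigl(S(t)A(t)+W(t)\bigr)$ by Corollary~\ref{cor:F2-master}.
\item Termwise differentiation is justified by Proposition~\ref{prop:base} with $\mathscr B=1/4$, so $F^{(r)}(0)=\sum_n f_2^{(r)}(n)$.
\end{itemize}
At $r=0$ we have $F(0)=\tfrac13 L$-sums; concretely $a_0=\tfrac13L_{0,0,0,0}=\tfrac13\zeta(2)$ and $S(0)=0$. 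So $F(0)=[t^0]W=-\tfrac1{12}j_0-\tfrac16e_0$, which should simplify to $\tfrac5{108}\zeta(4)$.

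\textbf{Main obstacle.} The hardest step is Step 2 at weights $8$ and $9$. The number of compositions and stuffle terms grows quickly, and one needs enough relations to reduce everything to the basis, including the nonreducible $\zeta(5,3)$. I would avoid relying on conjectural dimension counts. Instead I would generate regularized double shuffle relations up to weight $9$ (known to span all relations in this range), row-reduce exactly over $\Q$ to express every depth-$\le4$ MZV in the chosen basis, and record the reduction as a certificate checked by an independent program. For weight $9$, the absence of a $\zeta(7,2)$-type term in the answer is a consistency check: the irreducible part should cancel identically in the certificate.
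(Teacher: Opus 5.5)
Your proposal is correct and follows essentially the paper's own route. You express $e_r$, $a_r$, $j_r$ through the functionals $\Delta_r$ and \eqref{eq:a-functional}--\eqref{eq:j-functional}, convert each $L_{i,j,k,\ell}$ to ordinary MZVs via \eqref{eq:L-coeff-harmonic}--\eqref{eq:outer-mzv}, certify the reductions by exact rational linear algebra over proved relations, and recombine with $Q$ and $S$ through Corollary~\ref{cor:F2-master}. The remaining differences are minor, and none affects soundness, since only the explicit certificate is load-bearing:
\begin{itemize}
\item You use regularized double shuffle plus duality, where the paper uses convergent double shuffle, duality, the sum formula and the IKZ derivation relations. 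Both are proved relation families and suffice at weight $\le 9$.
\item The ambient space must contain all $2^{w-2}$ admissible indices, not only depth $\le 4$. For example, $L_{0,1,6,0}$ in $\Delta_5$ already produces $\zeta(2,1,1,1,1,1,1,1)$.
\item Double shuffle is only known to reduce weight $\le 9$ to a $d_w$-element spanning set; it is not known to generate all relations.
\end{itemize}
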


\begin{proof}
Apply \eqref{eq:L-coeff-harmonic}--\eqref{eq:outer-mzv} to the
functionals in \eqref{eq:f3-global}, \eqref{eq:a-functional}, and
\eqref{eq:j-functional}.  This produces finite homogeneous MZV
expressions.  We reduce them using only the following proved relations:
\begin{enumerate}
 \item convergent shuffle and stuffle product relations;
 \item MZV duality and the sum formula;
 \item the derivation relations of Ihara, Kaneko, and Zagier
       \cite{IKZ2006}.
\end{enumerate}
For the last family, in $\mathcal H=\Q\langle x,y\rangle$, let
\begin{equation}\label{eq:ikz}
 \partial_m(x)=x(x+y)^{m-1}y,
 \qquad
 \partial_m(y)=-x(x+y)^{m-1}y.
\end{equation}
The derivation theorem states that the MZV evaluation of
$\partial_m(w)$ vanishes for every admissible word $w$.

The remaining task is finite exact rational linear algebra.  Its complete
mathematical specification, source code, certificate format, and output are
given in Section~\ref{sec:certificates} and the ancillary source, with
an independent elimination-free checker supplied with the source package.  For every
claimed identity, the supplied checker reconstructs the target difference
from \eqref{eq:L-coeff-harmonic}, reconstructs every relation row from the
three proved families above, and verifies an explicit rational linear
combination expressing the target difference as a sum of those rows.  The
remainder is identically zero in each case.  Hence
\eqref{eq:e0}--\eqref{eq:e5} and the asserted coefficients of $F$ follow.
\end{proof}

\begin{remark}\label{rem:e1-external}
The values $e_0=17\zeta(4)/36$ and $e_1=-22\zeta(5)/9$ agree with the
independent identities recorded in \cite[Remark~4.3]{Sun2026}; the second
is equivalent to a binomial-harmonic identity proved by Ablinger
\cite{Ablinger2017}.  These facts are not used in Certificate~B.
\end{remark}

\section{Exact certificates and reproducibility}\label{sec:certificates}

The computer-assisted part of the proof consists of two finite algebraic
certificates.  This section specifies them independently of any particular
software implementation.

\subsection{Certificate A: formal coefficient extraction}

For each $0\le r\le5$, Certificate~A verifies the identity
\eqref{eq:formal-comparison}.  Its coefficient ring is
\[
 \Q[u,u^{-1},X_1,\ldots,X_7,Y_1,\ldots,Y_7],
\]
and the outer polynomial variables are $a,b,c,d$.  The homogeneous
components $L_m$ of \eqref{eq:Lambda} are constructed directly from that
formula, and the components $U_j$ of $e^{\Lambda_n}$ are then obtained from
\eqref{eq:exp-recurrence}.  Independently, the target coefficients
$T_r=[t^r]e^{\Theta_n(t)}$ are generated by a separate recurrence applied
directly to \eqref{eq:Theta}; the target routine does not call the code that
constructs $\Lambda_n$ or $e^{\Lambda_n}$.  The two paths share only the
basic sparse-Laurent-polynomial arithmetic.  They are compared after
truncation at the required total degree, so the assertion is literal
equality of sparse Laurent polynomials over $\Q$, not a comparison of two
expressions produced by the same expansion routine.  The program uses only
integer tuples, dictionaries, and \texttt{fractions.Fraction}; it invokes
no numerical or heuristic simplifier.

\subsection{Certificate B: MZV reduction}

Fix a weight $w$.  Let $\mathcal I_w$ be the ordered list of all admissible
indices of weight $w$.  The program constructs rows in $\Q^{\mathcal I_w}$
from:
\begin{enumerate}
 \item duality, one row $\zeta(\boldsymbol{s})-
       \zeta(\boldsymbol{s}^{\vee})$ for each admissible index;
 \item the difference between the convergent stuffle and shuffle
       expansions of $\zeta(\boldsymbol{a})\zeta(\boldsymbol{b})$;
 \item the IKZ relations $Z(\partial_m w)=0$ from \eqref{eq:ikz};
 \item the fixed-depth sum formula
       $\sum_{|\boldsymbol{s}|=w,\,\mathrm{dep}(\boldsymbol{s})=d}
       \zeta(\boldsymbol{s})=\zeta(w)$.
\end{enumerate}
Every row therefore represents a proved zero relation.  The target vector is
the exact difference between a coefficient generated from
\eqref{eq:L-coeff-harmonic}--\eqref{eq:outer-mzv} and its claimed closed
form.  Sparse Gaussian elimination over \texttt{Fraction} reduces each
target to zero.

For additional auditability, the file \path{mzv_certificates.json}
contains, for every target, explicit rational multipliers of the original
relation rows.  The small program
\path{check_mzv_certificates.py} performs no row reduction: it rebuilds
the target and the relation rows and checks coefficient by coefficient that
the supplied linear combination equals the target.  Thus the certificate
can be checked independently of the elimination routine that generated it.

\begin{table}[ht]
\centering
\caption{Exact relation matrices used by Certificate~B.}
\label{tab:relation-matrices}
\small
\begin{tabular}{@{}ccccc@{}}
\toprule
$w$ & indices & rows & rank & quotient dim. \\
\midrule
4 & 4   & 8   & 3   & 1 \\
5 & 8   & 22  & 6   & 2 \\
6 & 16  & 43  & 14  & 2 \\
7 & 32  & 100 & 29  & 3 \\
8 & 64  & 205 & 60  & 4 \\
9 & 128 & 454 & 123 & 5 \\
\bottomrule
\end{tabular}
\end{table}

Across weights $4$ through $9$, the program generates $832$ relation
rows.  Soundness does not rest on the ranks in
Table~\ref{tab:relation-matrices}: every row is constructed as a literal
instance of one of the four proved relation families above, and each target
certificate is checked as an explicit rational linear combination of those
rows.  The quotient dimensions nevertheless provide a useful global audit.
They are $1,2,2,3,4,5$, respectively, which are the corresponding
coefficients $d_w$ of
\[
  \sum_{w\geq0}d_wx^w=\frac{1}{1-x^2-x^3}.
\]
Brown's theorem that MZVs are spanned by Hoffman $\{2,3\}$-words implies
$\dim_{\Q}\mathcal Z_w\leq d_w$ \cite{Brown2012}.  Thus the agreement
with $d_w$ is a sensitive consistency check on the generated relation
matrices and would detect many common indexing or sign errors.  It is not
used as a completeness theorem: no lower bound for
$\dim_{\Q}\mathcal Z_w$ and no conjectural dimension formula enters the
proof.  The proof requires only that the twelve displayed target vectors
are certified to lie in the span of proved zero relations.

As a small example, the weight-$4$ certificate for \eqref{eq:e0} uses three
relation rows.  In the deterministic row ordering of the source, it writes
the target difference as
\begin{equation}\label{eq:weight4-certificate}
 \frac79\bigl(\zeta(2,1,1)-\zeta(4)\bigr)
 +\frac1{12}\bigl(\zeta(4)-4\zeta(3,1)\bigr)
 -\frac49\bigl(\zeta(2,1,1)-\zeta(3,1)-\zeta(2,2)\bigr).
\end{equation}
Each parenthesis is respectively a duality, shuffle--stuffle, or sum/duality
consequence generated by the stated relation system, and the expression
vanishes identically.

We next give a complete nontrivial certificate at weight $7$.  For an
admissible index $\boldsymbol{s}$, write
\[
 D(\boldsymbol{s})=\zeta(\boldsymbol{s})-\zeta(\boldsymbol{s}^{\vee}),
 \qquad
 K_m(\boldsymbol{s})=Z\bigl(\partial_m w_{\boldsymbol{s}}\bigr).
\]
For two admissible indices $\boldsymbol{a},\boldsymbol{b}$, let
$P(\boldsymbol{a},\boldsymbol{b})$ denote the stuffle expansion of
$\zeta(\boldsymbol{a})\zeta(\boldsymbol{b})$ minus its shuffle expansion.
All three quantities are zero by the relations used above.  At a fixed
weight the source orders the nonzero rows deterministically: first duality
rows in lexicographic index order, then shuffle--stuffle rows with increasing
left weight and lexicographic pairs, then IKZ rows with increasing derivation
order and lexicographic index, and finally the fixed-depth sum rows.  Row
numbers are zero-based, exactly as in \path{mzv_certificates.json}.

Let
\begin{equation}\label{eq:tau7}
 \tau_7=e_3+\frac{1423}{72}\zeta(7)
             +\frac{17}{18}\zeta(3)\zeta(4).
\end{equation}
Before vectorization, the product in \eqref{eq:tau7} is expanded by the
stuffle rule.  The weight-$7$ certificate is
\begin{equation}\label{eq:weight7-certificate}
 \tau_7=\sum_j \lambda_j\mathcal R_j^{(7)},
\end{equation}
Table~\ref{tab:weight7-certificate} gives the complete list of nonzero
multipliers.  Here $\mathcal R_j^{(7)}$ is the $j$th relation row in the
deterministic ordering just described.  Thus
\eqref{eq:weight7-certificate} is a fully specified rational identity among
MZVs, not merely a statement that row reduction returns zero.

\begin{table}[!t]
\centering
\caption{Complete weight-$7$ certificate for $e_3$.}
\label{tab:weight7-certificate}
\scriptsize
\renewcommand{\arraystretch}{0.92}
\begin{tabular}{@{}rcl@{}}
\toprule
row $j$ & multiplier $\lambda_j$ & generator of $\mathcal R_j^{(7)}$ \\
\midrule
0 & $-40/9$ & $D(2,1,1,1,1,1)$ \\
1 & $-46/9$ & $D(2,1,1,1,2)$ \\
2 & $-14657/1152$ & $D(2,1,1,2,1)$ \\
3 & $121/128$ & $D(2,1,1,3)$ \\
4 & $-30629/1152$ & $D(2,1,2,1,1)$ \\
5 & $-415/384$ & $D(2,1,2,2)$ \\
6 & $-1747/144$ & $D(2,1,3,1)$ \\
8 & $-59471/1152$ & $D(2,2,1,1,1)$ \\
9 & $-1309/576$ & $D(2,2,1,2)$ \\
10 & $-851/32$ & $D(2,2,2,1)$ \\
12 & $-7593/64$ & $D(2,3,1,1)$ \\
14 & $440/9$ & $D(2,4,1)$ \\
16 & $-38113/192$ & $D(3,1,1,1,1)$ \\
18 & $-80539/576$ & $D(3,1,2,1)$ \\
20 & $-28295/144$ & $D(3,2,1,1)$ \\
24 & $-79405/576$ & $D(4,1,1,1)$ \\
32 & $-2923/384$ & $P\bigl((2),(2,1,1,1)\bigr)$ \\
33 & $-5567/1152$ & $P\bigl((2),(2,1,2)\bigr)$ \\
34 & $-6145/288$ & $P\bigl((2),(2,2,1)\bigr)$ \\
35 & $10153/1152$ & $P\bigl((2),(2,3)\bigr)$ \\
36 & $-3553/192$ & $P\bigl((2),(3,1,1)\bigr)$ \\
37 & $31583/1152$ & $P\bigl((2),(3,2)\bigr)$ \\
38 & $33347/1152$ & $P\bigl((2),(4,1)\bigr)$ \\
39 & $1507/72$ & $P\bigl((2),(5)\bigr)$ \\
40 & $-6691/1152$ & $P\bigl((2,1),(2,1,1)\bigr)$ \\
41 & $-2497/1152$ & $P\bigl((2,1),(2,2)\bigr)$ \\
42 & $2057/576$ & $P\bigl((2,1),(3,1)\bigr)$ \\
43 & $-20905/1152$ & $P\bigl((2,1),(4)\bigr)$ \\
65 & $-4/9$ & $K_1(2,1,1,2)$ \\
\bottomrule
\end{tabular}
\end{table}

\subsection{Files, execution, and expected output}

The proof package contains the following files.
\begin{itemize}\small
 \item \path{operators.py}: the exact coefficient functionals
       $\Delta_0,\ldots,\Delta_5$;
 \item \path{verify_coefficients_43.py}: the standard-library verifier for
       Certificate~A;
 \item \path{mzv_tools.py}: compositions, shuffle and stuffle products,
       duality, IKZ rows, sum-formula rows, and exact sparse elimination;
 \item \path{verify_mzv_reductions.py}: direct exact reduction of all twelve
       targets;
 \item \path{mzv_certificates.json}: explicit rational row multipliers for
       all twelve targets;
 \item \path{check_mzv_certificates.py}: an elimination-free checker for the
       explicit certificates;
 \item \path{generate_mzv_certificates.py}: the optional exact generator for
       the JSON certificate;
 \item \path{run_all.py}: one-command reproduction of all checks;
 \item \path{check_hashes.py}: verification of the cryptographic manifest;
 \item \path{README.md} and \path{requirements.txt}: execution instructions
       and environment requirements;
 \item \path{expected_output.txt}: the complete expected terminal output;
 \item \path{SHA256SUMS.txt}: cryptographic hashes of the proof-critical
       ancillary files.
\end{itemize}

The ancillary package includes a SHA-256 manifest and a small checker for
verifying file integrity.  The manifest is distribution metadata; the
mathematical certificates themselves are the formal identities and exact
rational linear combinations described above.

No third-party package is required.  The verification is deliberately
lightweight: in a reference run under Python~3.12, the complete suite
finished in approximately ten seconds.  On any Python~3.10 or later
installation, it is run by
\begin{verbatim}
python3 run_all.py
\end{verbatim}
The run terminates successfully only if every exact assertion passes.

Appendix~\ref{app:operators-code} prints the operator data.  The
certificate checker, generator, and direct-verification programs are supplied
as ancillary files.  Every program uses only the Python standard library.

\section{Proof of Theorem~\ref{thm:main}}

By local uniform convergence,
\begin{equation}\label{eq:derivative-sums}
 F^{(r)}(0)=\sum_{n=1}^{\infty}f_2^{(r)}(n),
 \qquad
 G^{(r)}(0)=\sum_{n=1}^{\infty}f_3^{(r)}(n).
\end{equation}
The first five identities \eqref{eq:f2-1}--\eqref{eq:f2-5} now follow
directly from Proposition~\ref{prop:mzv-results}.

Proposition~\ref{prop:f3-extraction} verifies the required finite WZ
coefficient-space membership for the $f_3$ kernel through order five, while
Proposition~\ref{prop:symmetric} and
\eqref{eq:a-functional}--\eqref{eq:c-functional} realize the $f_2$
kernel as the corresponding symmetric parameter slice.  Equations~\eqref{eq:S-expansion} and \eqref{eq:logQ} verify the
ordinary-zeta prefactor condition, while
Proposition~\ref{prop:mzv-results} and the exact certificates of
Section~\ref{sec:certificates} verify the transformed-side MZV reductions.
Hence the hypotheses of Proposition~\ref{prop:wz-principle}, including
its auxiliary-parameter clause for the $\beta$-derivative, are satisfied
for this branch, and
\eqref{eq:general-convolution} is the common prefactor recombination used
below.

For completeness, we record the final recombination for the $f_3$
branch.  From \eqref{eq:logQ},
\begin{align}
 q_0&:=Q(0)=1, & q_1&:=Q'(0)=0, \notag\\
 q_2&:=Q''(0)=-2\zeta(2),
 &q_3&:=Q^{(3)}(0)=12\zeta(3), \label{eq:q-values}\\
 q_4&:=Q^{(4)}(0)=-54\zeta(4),
 &q_5&:=Q^{(5)}(0)=-240\zeta(2)\zeta(3)+720\zeta(5). \notag
\end{align}
Since $G=QE$ and $E^{(r)}(0)=r!e_r$,
\begin{equation}\label{eq:Leibniz}
 G^{(r)}(0)=\sum_{j=0}^{r}\binom rj q_j(r-j)!e_{r-j}.
\end{equation}
Substitution of \eqref{eq:e0}--\eqref{eq:e5} into
\eqref{eq:Leibniz} gives \eqref{eq:f3-2}--\eqref{eq:f3-5}.  The only
ordinary zeta-product simplifications needed are
\[
 \zeta(2)\zeta(4)=\frac74\zeta(6),\qquad
 \zeta(2)\zeta(6)=\frac53\zeta(8),\qquad
 \zeta(4)^2=\frac76\zeta(8).
\]
This completes the proof of Theorem~\ref{thm:main}. \qed
\section{The cubic branch and the oscillatory phase}\label{sec:analytic}

This is the instance $\chi_m=3-3^m$ of
Section~\ref{sec:balanced}.  Proposition~\ref{prop:base}, with
$\mathscr B=1/27$, gives local normal convergence of the translated series
and all fixed-order derivatives, so differentiation and coefficient
extraction may be interchanged with summation.

\subsection{Removal of the phase}

\begin{lemma}\label{lem:phase}
For $r\ge0$,
\begin{align}
 \sum_{n=1}^{\infty}\Phi^{(r)}(n)
 &=\sum_{q=0}^{r}\binom rq(i\pi)^{r-q}R_q^U,\label{eq:phaseA}\\
 \sum_{n=1}^{\infty}\Psi^{(r)}(n)
 &=\sum_{q=0}^{r}\binom rq(i\pi)^{r-q}R_q^V.\label{eq:phaseB}
\end{align}
\end{lemma}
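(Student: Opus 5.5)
The plan is to apply the Leibniz rule to $\Phi(x)=e^{\pi i x}U(x)$ from \eqref{eq:hdefs}, evaluate at positive integers, and justify summing termwise. For every $r\ge0$,
\[
 \Phi^{(r)}(x)=\sum_{q=0}^{r}\binom rq (i\pi)^{r-q}e^{\pi i x}U^{(q)}(x),
\]
since $\frac{d^{r-q}}{dx^{r-q}}e^{\pi i x}=(i\pi)^{r-q}e^{\pi i x}$. At $x=n\in\mathbb Z_{>0}$ we have $e^{\pi i n}=(-1)^n$, so
\[
 \Phi^{(r)}(n)=\sum_{q=0}^r\binom rq(i\pi)^{r-q}(-1)^nU^{(q)}(n).
\]
The same computation with $V$ in place of $U$ gives the corresponding pointwise identity for $\Psi$.

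To sum over $n$, we must know that each series $\sum_n(-1)^nU^{(q)}(n)$ converges absolutely. Then the finite sum over $q$ can be interchanged with the sum over $n$, and \eqref{eq:phaseA} and \eqref{eq:phaseB} follow with $R_q^U,R_q^V$ as in \eqref{eq:Rdefs}.

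For absolute convergence, note that $U(x)=R(x)\Gamma(x)^3/\Gamma(3x)$ with $R$ rational. This is a balanced gamma quotient with exponent data $(1;3),(3;-1)$ and base $\mathscr B=1/27$, as in Table~\ref{tab:characteristic}. The only care needed is the poles of $R$: $U$ has poles at $x=0$ and $x=1/2$, and $V$ has the same ones. These lie off a neighbourhood of the translated points $n+u$ for $n\ge1$ and small $u$. Proposition~\ref{prop:base} then gives local normal convergence of $\sum_n U(n+u)$ and of every fixed-order $u$-derivative; the same holds for $V$. In particular $\sum_n|U^{(q)}(n)|<\infty$ and $\sum_n|V^{(q)}(n)|<\infty$ for each $q$, which is exactly what is needed.

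There is no real obstacle. The only point to state carefully is this convergence justification, and it is supplied by Proposition~\ref{prop:base}.
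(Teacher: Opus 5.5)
Your proposal is correct and is essentially the paper's own proof: Leibniz's rule for $e^{\pi i x}U(x)$ and $e^{\pi i x}V(x)$, evaluation via $e^{\pi i n}=(-1)^n$, and termwise summation justified by the local normal convergence of Proposition~\ref{prop:base}. One minor slip: $U$ and $V$ also have poles at the negative integers, coming from $\Gamma(x)^3/\Gamma(3x)$, not only at $0$ and $1/2$; this is harmless, since only neighbourhoods of the points $n\ge1$ enter the argument.
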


\begin{proof}
Apply Leibniz's rule to $e^{\pi ix}U(x)$ and $e^{\pi ix}V(x)$, evaluate at
$x=n$, and use $e^{\pi in}=(-1)^n$.  The local normal convergence supplied
by Proposition~\ref{prop:base} permits termwise summation.
\end{proof}

Two corrections in Sun's first derivative family are already forced by
this lemma.  Namely,
\[
 \Im\sum_{n\ge1}\Phi''(n)=2\pi R_1^U=40\pi\zeta(5),
\]
and
\begin{equation}\label{eq:dependentA}
 \Im\sum_{n\ge1}\Phi^{(5)}(n)
 =\pi^5R_0^U-10\pi^3R_2^U+5\pi R_4^U.
\end{equation}
The latter becomes
\[
 -240\pi\bigl(665\zeta(8)+16\zeta(5,3)\bigr).
\]
Similarly,
\begin{equation}\label{eq:dependentB}
 \Im\sum_{n\ge1}\Psi^{(5)}(n)
 =\pi^5R_0^V-10\pi^3R_2^V+5\pi R_4^V,
\end{equation}
so no independent evaluation of $R_5^V$ is required.

\subsection{Triplication and the common factor}

The gamma triplication formula gives
\begin{equation}\label{eq:factorization}
 (-1)^nU(n+u)=C(u)K_U(n,u),\qquad
 (-1)^nV(n+u)=C(u)K_V(n,u),
\end{equation}
where
\begin{equation}\label{eq:Cdef}
 C(u)=\frac{2\pi}{\sqrt3}\,27^{-u}
 \frac{\Gamma(1+u)^2}
 {\Gamma(\frac13+u)\Gamma(\frac23+u)}
\end{equation}
and
\begin{align}
 K_U(n,u)&=\left(-\frac1{27}\right)^n
 \frac{(1+u)_n^2}{(\frac13+u)_n(\frac23+u)_n}
 \frac{28(n+u)^2-18(n+u)+3}
 {(n+u)^4(2n+2u-1)^3},\label{eq:KA}\\
 K_V(n,u)&=\left(-\frac1{27}\right)^n
 \frac{(1+u)_n^2}{(\frac13+u)_n(\frac23+u)_n}
 \frac{P(n+u)}
 {(n+u)^5(2n+2u-1)^4}.\label{eq:KB}
\end{align}
Here and below $(z)_n$ denotes the rising factorial.  Equivalently, the
triplication formula simplifies \eqref{eq:Cdef} to
\[
 C(u)=\frac{\Gamma(1+u)^3}{\Gamma(1+3u)}.
\]
Thus $C(u)=C_{\mathcal G}(u)$ for exponent data $(1;3),(3;-1)$, so
$\chi_m=3-3^m$.  Lemma~\ref{lem:balanced-prefactor} therefore gives the
specialization
\begin{equation}\label{eq:logC}
 \log C(u)=\sum_{m=2}^{\infty}
 (-1)^{m+1}\frac{3^m-3}{m}\zeta(m)u^m.
\end{equation}
Writing $C(u)=\sum_{m\ge0}c_mu^m$, we need
\begin{equation}\label{eq:clist}
 \begin{aligned}
 c_0&=1,&c_1&=0,&c_2&=-3\zeta(2),\\
 c_3&=8\zeta(3),&c_4&=-\frac{33}{4}\zeta(4),&
 c_5&=48\zeta(5)-24\zeta(2)\zeta(3).
 \end{aligned}
\end{equation}
Let
\begin{equation}\label{eq:Vdef}
 W_r^U=\sum_{n\ge1}[u^r]K_U(n,u),\qquad
 W_r^V=\sum_{n\ge1}[u^r]K_V(n,u).
\end{equation}
The universal convolution \eqref{eq:general-convolution} applies with its
generic coefficient $V_r$ specialized to $W_r^U$ and $W_r^V$,
respectively.

\section{The parameterized WZ input}\label{sec:wz}

A WZ pair $(F,G)$ satisfies \cite{WZ1990}
\[
 F(n+1,k)-F(n,k)=G(n,k+1)-G(n,k).
\]
Under the usual boundary conditions, summation over the lattice gives an
identity between a hypergeometric side and a transformed side.  Au's
Example~IV \cite{Au2026} supplies three parameterized identities
\begin{equation}\label{eq:AuIdentities}
 \sum_{n=1}^{\infty}\mathcal H_\nu(\boldsymbol\alpha;n)
 =\sum_{k=0}^{\infty}\mathcal M_\nu(\boldsymbol\alpha;k),
 \qquad \nu\in\{A,B,C\},
\end{equation}
valid in a polydisc about the origin.  The parameter tuples are
$(a,b,c,d,e)$ for $A$ and $B$, and $(a,b,c,d)$ for $C$.  In the second
family we use Au's normalized pair
\[
 a\mathcal H_B,\qquad a\mathcal M_B.
\]
The six exact summands are reproduced in Supplement~A and in the
machine-readable file \texttt{ExampleIV\_summands.wl}.  Thus
\eqref{eq:AuIdentities} is an explicit input, not a reference to an opaque
notebook object.

Expand
\[
 \mathcal H_\nu(\boldsymbol\alpha;n)
 =\sum_{\boldsymbol m}h_{\nu,\boldsymbol m}(n)
  \boldsymbol\alpha^{\boldsymbol m},
 \qquad
 \mathcal M_\nu(\boldsymbol\alpha;k)
 =\sum_{\boldsymbol m}m_{\nu,\boldsymbol m}(k)
  \boldsymbol\alpha^{\boldsymbol m}.
\]
The following specialization of Au's coefficient principle is the
second structural input to the proof.

\begin{lemma}[Au's coefficient principle]\label{lem:Aucoeff}
For every $\nu\in\{A,B,C\}$ and every parameter multi-index
$\boldsymbol m$ of total degree $N$, one has
\[
 \sum_{n=1}^{\infty}h_{\nu,\boldsymbol m}(n)
 =\sum_{k=0}^{\infty}m_{\nu,\boldsymbol m}(k),
\]
and the transformed sum belongs to the space of ordinary multiple zeta
values of weight $N+2$.
\end{lemma}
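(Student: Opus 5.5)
The lemma has two parts: equality of coefficient sums, and the ordinary-MZV nature of the transformed side. I would prove them separately.

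For the equality, I would use analyticity. Each identity in \eqref{eq:AuIdentities} holds on a polydisc $|\alpha_j|<\varepsilon$ about the origin. Both sides are series of holomorphic functions there, so the key point is that each series converges normally on a slightly smaller closed polydisc. On the hypergeometric side, each $\mathcal H_\nu(\boldsymbol\alpha;n)$ is a Pochhammer quotient times a rational function of $n$ and $\boldsymbol\alpha$. Its modulus is bounded by a geometric majorant $\theta^n n^{\kappa}$ with $\theta<1$, uniformly for small $\boldsymbol\alpha$. This is the same Stirling argument as in Proposition~\ref{prop:base}; the base for Example~IV is $1/27$ or a comparable ratio. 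On the $k$-side, the summand $\mathcal M_\nu(\boldsymbol\alpha;k)$ decays like $k^{-2+\Re(\text{linear form in }\boldsymbol\alpha)}$, and this is uniformly summable once $\varepsilon$ is small. By Weierstrass's theorem the sums are holomorphic, and Taylor coefficients may be extracted termwise by Cauchy's integral formula on a torus. Comparing the coefficients of $\boldsymbol\alpha^{\boldsymbol m}$ on both sides of \eqref{eq:AuIdentities} then gives the first claim.

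For the MZV statement, I would follow the template of \eqref{eq:poc-num}--\eqref{eq:outer-mzv}. I would read off from Supplement~A that each $\mathcal M_\nu(\boldsymbol\alpha;k)$ has the shape
\[
 \frac{\prod(1+\ell_i(\boldsymbol\alpha))_k}{\prod(1+\ell'_j(\boldsymbol\alpha))_{k+1}}\cdot\rho(\boldsymbol\alpha,k),
\]
where the $\ell_i,\ell'_j$ are linear forms, the numerator and denominator are balanced so that at $\boldsymbol\alpha=0$ the summand is $1/(k+1)^2$ (times a constant), and $\rho$ is a simple rational factor in $k+1$. Expanding each Pochhammer ratio by \eqref{eq:poc-num} and \eqref{eq:poc-den} gives the coefficient $m_{\nu,\boldsymbol m}(k)$ as a $\Q$-linear combination of products of $\mathsf H_k(\onevec{i})$ and $\mathsf H_{k+1}^\star(\onevec{j})$, and possibly powers $(k+1)^{-p}$, divided by $(k+1)^2$. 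The total degree of the $\mathsf H$-factors, together with any extra powers $p$, equals $N$. Applying \eqref{eq:star-compositions}, the top-index split \eqref{eq:top-split} and the finite stuffle product rewrites the numerator as a $\Q$-combination of $\mathsf H_{k}(\boldsymbol s)$ with weight bookkeeping preserved. Then \eqref{eq:outer-mzv}, with $n=k+1$ and $q\ge2$, gives a sum of admissible MZVs of weight exactly $N+2$.

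The main obstacle is verifying that the transformed summands really have this balanced shape. Specifically, I need that every linear form enters only through shifts $1+\ell(\boldsymbol\alpha)$ with integer base points aligned with $k$ or $k+1$, and that no half-integer or cube-root Pochhammer appears on the $\mathcal M$ side. Such terms would produce alternating or cyclotomic sums rather than ordinary MZVs. I would check this directly on the six summands in Supplement~A, including the normalized pair $a\mathcal H_B, a\mathcal M_B$. For that pair the factor $a$ must be seen to cancel a $1/a$ so that the summand is holomorphic at the origin; if it does not cancel, the degree shift must be accounted for in the weight count. Admissibility ($q\ge2$) follows from the $1/(k+1)^2$ normalization, which I would confirm holds in each family.
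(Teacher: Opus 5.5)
\textbf{The equality part is fine; the weight part has a genuine gap.}

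Your argument for the equality is exactly the paper's: normal convergence of both sides on a smaller closed polydisc, then termwise extraction of the coefficient of $\boldsymbol\alpha^{\boldsymbol m}$.

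The gap is in the weight assertion. The structural property you flag as the ``main obstacle'' does not hold, so the template \eqref{eq:poc-num}--\eqref{eq:outer-mzv} cannot be run for two of the three families.

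\emph{Family $A$.} The summand $\mathcal M_A$ in \eqref{eq:MA} contains the half-integer symbols $\RF{\frac12+e}{k}$, $\RF{\frac12+a-b+e}{k}$, $\RF{\frac12+c+e}{k}$, $\RF{\frac12+d+e}{k}$. It also contains the factors $(1+2e+2k)^{-1}$ and $(1+2a-2b+2e+2k)^{-1}$. Already at the origin it equals $2\bigl((k+\frac12)^{-2}-(k+1)^{-2}\bigr)$, not a multiple of $(k+1)^{-2}$.

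\emph{Family $C$.} The summand $\mathcal M_C$ in \eqref{eq:MC} contains $\RF{1+b+2d}{2k}/\RF{1+2a-b+2d}{2k}$ and linear factors in $2k$.

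Expanding these symbols produces odd-index harmonic sums, or equivalently harmonic sums up to $2k$. The outer sums are then level-two objects (Euler sums or multiple $t$-values), not the sums $\sum_n\mathsf H_{n-1}(\boldsymbol s)/n^q$ of \eqref{eq:outer-mzv}. Level-two sums are not ordinary MZVs in general; multiple $t$-values, for example, already bring in $\log 2$ in depth two. So no stuffle bookkeeping on the termwise expansion can deliver the claim.

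Only $\mathcal M_B$ has the integer-shift shape you describe. Even there, its value at the origin is $2/(k+1)^3$, not a multiple of $(k+1)^{-2}$. The normalization by $a$ is therefore a pure degree shift, giving weight $(N-1)+3=N+2$; there is no $1/a$ to cancel.

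\textbf{What is missing.} You need the descent from level two to level one for these particular coefficient sums. That is a genuine theorem, not a routine check on the summands. The paper does not attempt it from the Pochhammer expansion. It imports the weight statement as the Example~IV specialization of Au's general coefficient theorem. To complete your route you would have to invoke that theorem too. Alternatively, you would need an independent proof that the specific Euler-sum combinations produced by $\mathcal M_A$ and $\mathcal M_C$ collapse to ordinary MZVs of weight $N+2$.
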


\begin{proof}
The coefficient identity follows from normal convergence and coefficient
extraction in \eqref{eq:AuIdentities}.  The weight assertion is the
specialization to Example~IV of Au's general coefficient theorem
\cite[Sec.~3 and Example~IV]{Au2026}.
\end{proof}

For $N\ge0$, set
\begin{equation}\label{eq:VN}
 \mathcal V_N=\Span\{h_{\nu,\boldsymbol m}(n):
 \nu\in\{A,B,C\},\ |\boldsymbol m|=N\},
\end{equation}
with the normalized $B$-family understood.

The rational degrees in \eqref{eq:KA}--\eqref{eq:KB} dictate different
shifts.  Put
\begin{equation}\label{eq:targets}
 T_U(n,u)=u^2K_U(n,u),\qquad T_V(n,u)=u^3K_V(n,u).
\end{equation}

\begin{proposition}[Exact span certificates]\label{prop:span}
For $0\le r\le5$,
\[
 [u^{2+r}]T_U(n,u)\in\mathcal V_{2+r},
\]
and for $0\le r\le4$,
\[
 [u^{3+r}]T_V(n,u)\in\mathcal V_{3+r}.
\]
Each membership has a rational coefficient vector whose substitution into
the unreduced coefficient rows has identically zero residual.
\end{proposition}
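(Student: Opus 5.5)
Following the template of Proposition~\ref{prop:f3-extraction}, I would turn each membership into an identity in a free coefficient ring and settle it by exact rational linear algebra. The first task is to put both sides over common normalized variables. On the target side, write
\[
 \frac{(1+u)_n^2}{(\frac13+u)_n(\frac23+u)_n}
\]
divided by its value at $u=0$ as $\exp$ of
\[
 \sum_{m\ge1}\frac{(-1)^{m+1}}{m}\Bigl(2H_n^{(m)}-\sum_{j\le n}(j-\tfrac13)^{-m}-\sum_{j\le n}(j-\tfrac23)^{-m}\Bigr)u^m .
\]
The two cubic tails combine into $3^mH_{3n}^{(m)}-3^mH_n^{(m)}-H_n^{(m)}$-type sums. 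Introduce formal variables $X_m=H_n^{(m)}$ and $Y_m=H_{3n}^{(m)}$, together with $u_1=1/n$ and $u_2=1/(2n-1)$, to absorb the rational factors $(n+u)^{-4}(2n+2u-1)^{-3}$. Their Taylor expansions in $u$ are polynomials in $u_1,u_2$, and the numerators $28(n+u)^2-18(n+u)+3$ and $P(n+u)$ are handled the same way. After the shift by $u^2$ or $u^3$ from \eqref{eq:targets}, this makes $[u^{2+r}]T_U$ and $[u^{3+r}]T_V$ explicit elements of $\mathscr R'=\Q[n^{\pm1},(2n-1)^{\pm1},X_\bullet,Y_\bullet]$, each multiplied by the common base term $(-1/27)^n\,n!^2/((1/3)_n(2/3)_n)$.

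Next I would give the same treatment to the six Example~IV summands in Supplement~A. Each $\mathcal H_\nu(\boldsymbol\alpha;n)$ is a Pochhammer quotient in $n$ with parameter-shifted arguments times a rational factor. Normalizing by its value at $\boldsymbol\alpha=0$, which I expect to be a rational multiple of the same base term, gives an expression $P_\nu\exp(\Lambda_\nu)$ as in \eqref{eq:Lambda}. Its coefficients $h_{\nu,\boldsymbol m}(n)$ are then computed through degree $7$ in the parameters with the Bell recurrence \eqref{eq:exp-recurrence}, and they land in the same ring $\mathscr R'$. If some family carries extra denominators, such as $1/(3n-1)$, I would adjoin those as additional formal variables as well.

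For each $r$ I would set up the linear system $\sum_{\nu,\boldsymbol m}\lambda_{\nu,\boldsymbol m}h_{\nu,\boldsymbol m}=[u^{N}]T$ with $N=2+r$ or $N=3+r$. Equating all monomials of $\mathscr R'$ gives a finite system over $\Q$, which I would solve by exact rational row reduction. I would then check the chosen solution separately by substituting it back into the unreduced rows and confirming that the residual is the zero polynomial, which is exactly what the statement asserts.

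The step I expect to be hardest is making the formal ring faithful. The relations $Y_m\leftrightarrow X_m$ coming from the $3n$ splitting, and the partial fractions linking $1/n$, $1/(2n-1)$ and any $1/(3n-k)$ factors, must be encoded consistently on both sides. Otherwise genuine identities in $n$ would fail to hold as formal identities. My first approach is to keep every harmonic-type sum in terms of primitive variables ($H_n$, the mod-$3$ residue-class sums, and the rational factors in $n$ kept as independent inverses), and to treat any further polynomial relations among them explicitly as added rows. If the free-ring system has no solution, I would fall back to specializing $n=1,\dots,M$ in exact arithmetic as a consistency check, but the proof itself must rest on the formal identity.
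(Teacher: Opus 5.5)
Your plan is essentially the paper's proof: both expand the target and the Example~IV coefficient rows in a free polynomial algebra of finite harmonic symbols and rational factors in $n$, solve the augmented system exactly over $\Q$, and check that substituting the solution back into the unreduced rows leaves an identically zero residual. One small slip to fix when you implement it: the two cubic tails combine to $3^mH_{3n}^{(m)}-H_n^{(m)}$, so the full logarithmic coefficient is $3H_n^{(m)}-3^mH_{3n}^{(m)}$, with no extra $-3^mH_n^{(m)}$ term.
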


\begin{proof}
All parameter coefficients are expanded in the free polynomial algebra
generated by the finite harmonic symbols and rational factors in $n$ that
arise from Pochhammer differentiation.  For each degree, the coefficient
rows from the three WZ families form a matrix over $\Q$.  The target row is
appended, and exact equality of the source and augmented ranks is checked.
An exact rational solution is then obtained with \texttt{LinearSolve} and
substituted into the original rows.  The residual is checked entry by entry
to be zero.

The complete rational vectors are contained in
\path{q4_span_certificates.wl} and
\path{q5_order7_span_certificates.wl}.  Tables
\ref{tab:q4span} and~\ref{tab:q5span} report the meaningful audit data:
the dimension of the ambient symbolic basis, the source rank, the
augmented rank, and the support size of a certificate.
\end{proof}

\begin{table}[ht]
\centering
\caption{Exact span data for $T_U$.}\label{tab:q4span}
\begin{tabular}{rrrrrr}
\toprule
degree & derivative & weight & ambient dim. & source/aug. rank & support\\
\midrule
2&0&4&75&12/12&7\\
3&1&5&252&29/29&11\\
4&2&6&744&59/59&21\\
5&3&7&1992&108/108&33\\
6&4&8&4962&180/180&48\\
7&5&9&11652&283/283&68\\
\bottomrule
\end{tabular}
\end{table}

\begin{table}[ht]
\centering
\caption{Exact span data for $T_V$.}\label{tab:q5span}
\begin{tabular}{rrrrrr}
\toprule
degree & derivative & weight & ambient dim. & source/aug. rank & support\\
\midrule
3&0&5&252&29/29&19\\
4&1&6&744&59/59&45\\
5&2&7&1992&108/108&76\\
6&3&8&4962&180/180&120\\
7&4&9&11652&283/283&177\\
\bottomrule
\end{tabular}
\end{table}

\section{Exact transformed-side evaluations}\label{sec:evaluation}

Applying the rational vectors from Proposition~\ref{prop:span} to the
right sides of \eqref{eq:AuIdentities} gives the following exact values.

\begin{proposition}\label{prop:Vvalues}
For the first cubic kernel,
\begin{align}
 W_0^U&=-2\zeta(4),&
 W_1^U&=20\zeta(5),\label{eq:VA01}\\
 W_2^U&=-120\zeta(6),\label{eq:VA2}\\
 W_3^U&=16\zeta(3)\zeta(4)+546\zeta(7),\label{eq:VA3}\\
 W_4^U&=-2130\zeta(8)-32\zeta(5,3)\notag\\
 &\qquad{}-160\zeta(3)\zeta(5),\label{eq:VA4}\\
 W_5^U&=7488\zeta(9)+960\zeta(3)\zeta(6)
                  +96\zeta(4)\zeta(5).\label{eq:VA5}
\end{align}
For the second cubic kernel,
\begin{align}
 W_0^V&=180\zeta(5)-112\zeta(2)\zeta(3),\label{eq:VB0}\\
 W_1^V&=392\zeta(3)^2-190\zeta(6),\label{eq:VB1}\\
 W_2^V&=1920\zeta(7)-3360\zeta(3)\zeta(4),\label{eq:VB2}\\
 W_3^V&=896\zeta(2)\zeta(3)^2+8976\zeta(3)\zeta(5)
 +\frac{8976}{5}\zeta(5,3)-\frac{4367}{5}\zeta(8),\label{eq:VB3}\\
 W_4^V&=-\frac{6272}{3}\zeta(3)^3-34432\zeta(3)\zeta(6)
 -\frac{29560}{3}\zeta(9).\label{eq:VB4}
\end{align}
\end{proposition}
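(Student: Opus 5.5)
The plan is to obtain each $W_r^U$ and $W_r^V$ by transporting the span certificates of Proposition~\ref{prop:span} through Au's coefficient principle, Lemma~\ref{lem:Aucoeff}, and then reducing the resulting transformed sides to the displayed ordinary-MZV forms by the same exact relation machinery as in Certificate~B.

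\emph{Step 1: write $W_r$ as a sum of WZ coefficients.} By \eqref{eq:targets}, $[u^r]K_U(n,u)=[u^{2+r}]T_U(n,u)$ and $[u^r]K_V(n,u)=[u^{3+r}]T_V(n,u)$. Proposition~\ref{prop:span} then gives, for each $r$, a finite rational vector $(\lambda_{\nu,\boldsymbol m})$ with
\[
 [u^r]K_U(n,u)=\sum_{\nu,\,|\boldsymbol m|=2+r}\lambda_{\nu,\boldsymbol m}h_{\nu,\boldsymbol m}(n),
\]
identically in $n\ge1$, and similarly for $V$ with $|\boldsymbol m|=3+r$. These are identities of formal expressions in harmonic symbols and rational functions of $n$, so they specialize to every $n$.

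\emph{Step 2: sum over $n$.} Summing termwise is justified by the normal convergence of Proposition~\ref{prop:base} with $\mathscr B=1/27$, together with the polydisc convergence underlying \eqref{eq:AuIdentities}. Lemma~\ref{lem:Aucoeff} then gives
\[
 W_r^U=\sum\lambda_{\nu,\boldsymbol m}\sum_{k\ge0}m_{\nu,\boldsymbol m}(k).
\]
This is a homogeneous combination of ordinary MZVs of weight $4+r$ for $U$, and of weight $5+r$ for $V$; these weights match the displayed right-hand sides.

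\emph{Step 3: reduce to closed form.} For each transformed coefficient $m_{\nu,\boldsymbol m}(k)$ I would use the Pochhammer expansions \eqref{eq:poc-num}--\eqref{eq:poc-den}, applied to the Example~IV summands, to express it as a finite polynomial in $\mathsf H_k$ and $\mathsf H_k^\star$ multiplied by a power of $1/(k+1)$ or a similar factor. Star sums are converted via \eqref{eq:star-compositions}, products via the stuffle product, and the top index is adjusted by \eqref{eq:top-split}. The outer sums are then read off through \eqref{eq:outer-mzv}, which yields a vector in $\Q^{\mathcal I_w}$.

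Subtracting the claimed value, with products such as $\zeta(3)\zeta(5)$ stuffle-expanded, gives a target difference. This difference is certified to lie in the span of the duality, shuffle--stuffle, IKZ and sum-formula rows, exactly as in Certificate~B for weights $4$ through $9$. The maximal weight needed is $9$, attained by $W_5^U$ and $W_4^V$, so the relation matrices of Table~\ref{tab:relation-matrices} suffice. No PSLQ is involved.

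\emph{Main obstacle.} I expect the difficulty to be in Step 3 rather than in the span certificates. Au's Example~IV transformed summands may contain Pochhammers at shifted or half-integer-type arguments. If so, the naive harmonic expansion would produce alternating or colored sums instead of ordinary MZVs. My first attempt would be to check directly that in Example~IV all transformed-side Pochhammers have integer base points $1+\text{(parameter)}$, so that only $\mathsf H_k$-type symbols arise, as Lemma~\ref{lem:Aucoeff} asserts. Where a factor like $(2k+1)$ appears, I would split it through the even/odd decomposition and recombine.

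A secondary obstacle is the sheer size of the weight-$9$ certificates, with support $177$ for $W_4^V$. I would handle this by storing explicit multiplier vectors and checking them with an elimination-free checker, as in \path{check_mzv_certificates.py}. As a sanity check, $W_0^U=-2\zeta(4)$, combined with the convolution \eqref{eq:general-convolution} and $c_0=1$, gives $R_0^U=-2\zeta(4)=-\pi^4/45$. This matches \eqref{eq:RA01}, and I would use agreement of this kind with Theorem~\ref{thm:real} as a cross-check on the whole chain.
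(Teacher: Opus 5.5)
\textbf{Verdict: genuine gap in Step~3.} Your Steps~1 and~2 are the paper's route: a span certificate, then coefficientwise transport through Lemma~\ref{lem:Aucoeff}. Step~3 fails as written, and the failure is exactly the obstacle you flagged but did not resolve.

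\textbf{Where Step~3 breaks.} Only $\mathcal M_B$ has the shape to which \eqref{eq:poc-num}--\eqref{eq:poc-den} and \eqref{eq:outer-mzv} apply. The other two families do not.
In \eqref{eq:MA}:
the Pochhammers $(\tfrac12+e)_k$, $(\tfrac12+a-b+e)_k$, $(\tfrac12+c+e)_k$, $(\tfrac12+d+e)_k$ have half-integer base points, and the prefactor contains $1+2e+2k$ and $1+2a-2b+2e+2k$.
In \eqref{eq:MC}:
the Pochhammers have length $2k$, and the rational factors contain $1+b+2d+2k$ and $1+2a-b+2d+2k$.

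So your integrality check fails on inspection. The even/odd splitting then produces odd-index harmonic sums, sums $\sum_{j\le 2k}j^{-s}$, and outer factors $(2k+1)^{-q}$ mixed with $(k+1)^{-q}$. The resulting outer sums are alternating (level-two) Euler sums, not vectors in $\Q^{\mathcal I_w}$.

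The individual pieces genuinely leave the ordinary-MZV space. Expanding \eqref{eq:MA} to first order in $c$ already produces $16$ times the odd-index double sum
$\sum_{k>j\ge0}(2k+1)^{-2}(2j+1)^{-1}=\tfrac34\zeta(2)\log 2-\tfrac{7}{16}\zeta(3)$.

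Lemma~\ref{lem:Aucoeff} asserts only that the total transformed coefficient lies in the ordinary MZV space. It does not give you ordinary-MZV coordinates to feed into Certificate~B. Hence your claim that the relation matrices of Table~\ref{tab:relation-matrices} suffice is false. Ordinary duality, shuffle--stuffle, IKZ and sum-formula rows cannot even represent the target vectors of certificates supported on the $A$ or $C$ families, and the paper uses all three families jointly.

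\textbf{What is missing.} You need a level-two evaluation and reduction layer.
The paper supplies it without Certificate~B. It sums each needed coefficient of $\mathcal M_A$, $a\mathcal M_B$ and $\mathcal M_C$ with Au's \texttt{MZSum} and reduces the results with \texttt{MZExpand}. This involves $188$ transformed coefficients for $U$ and $437$ for $V$.
To keep a self-contained, elimination-free relation certificate in your style, you would have to expand into the alternating-MZV index space through weight~$9$. You would then certify each target against proved level-two relations, such as alternating double shuffle and distribution relations. That system is far larger than the $832$ ordinary rows.

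Your cross-check $R_0^U=W_0^U$ is fine, but it does not touch this issue.
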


\begin{proof}
For each nonzero entry of a span certificate, the corresponding coefficient
of $\mathcal M_A$, $a\mathcal M_B$, or $\mathcal M_C$ is summed exactly by
Au's \texttt{MZSum} routine \cite{Au2023}; the resulting MZVs are reduced
with \texttt{MZExpand}.  The first family uses $188$ distinct transformed
coefficients and the second uses $437$.  Every coefficient and every
linear-combination scalar is exact.

For independent audit, the raw values, caches, certificate vectors, and
zero-residual summaries are included in the ancillary directory.  The
first run records
\[
 \texttt{AllSixInSpan=true},\qquad
 \texttt{AllSixExactZero=true},
\]
and the second records
\[
 \texttt{AllFiveInSpan=true},\qquad
 \texttt{AllFiveExactZero=true}.
\]
No numerical test is used as an acceptance criterion.
\end{proof}

\begin{proof}[Proof of Theorem~\ref{thm:real}]
Proposition~\ref{prop:span} verifies the finite WZ coefficient-space
hypothesis of Proposition~\ref{prop:wz-principle} for the $U$ kernel
through order five and the $V$ kernel through order four.  Lemma
\ref{lem:Aucoeff} and Proposition~\ref{prop:Vvalues} verify that the
corresponding transformed coefficients are ordinary MZVs of the prescribed
weights.  Thus Proposition~\ref{prop:wz-principle} applies to both cubic
kernels.  Substitute \eqref{eq:VA01}--\eqref{eq:VA5} and
\eqref{eq:VB0}--\eqref{eq:VB4} into the universal convolution
\eqref{eq:general-convolution}, using \eqref{eq:clist}.  The standard even-zeta
relations give \eqref{eq:RA01}--\eqref{eq:RA5} and
\eqref{eq:RB0}--\eqref{eq:RB4}.  For example,
\[
 R_2^U=2!\bigl(W_2^U-3\zeta(2)W_0^U\bigr)
 =-219\zeta(6),
\]
and
\[
 R_3^V=3!\bigl(W_3^V-3\zeta(2)W_1^V+8\zeta(3)W_0^V\bigr),
\]
which is \eqref{eq:RB3}.  All remaining cases are identical finite
substitutions.
\end{proof}

\section{The corrected complex identities}

\begin{corollary}\label{cor:complex}
For $\Phi$ defined by \eqref{eq:Fdef},
\begingroup\small
\begin{align}
 \sum_{n=1}^{\infty}\Phi(n)&=-\frac{\pi^4}{45},\label{eq:Phi0}\\
 \sum_{n=1}^{\infty}\Phi'(n)
 &=20\zeta(5)-\frac{i\pi^5}{45},\label{eq:Phi1}\\
 \sum_{n=1}^{\infty}\Phi''(n)
 &=-198\zeta(6)+40\pi i\zeta(5),\label{eq:Phi2}\\
 \sum_{n=1}^{\infty}\Phi^{(3)}(n)
 &=36\bigl(91\zeta(7)-20\zeta(2)\zeta(5)\bigr)
   -\frac{212}{315}\pi^7 i,\label{eq:Phi3}\\
 \sum_{n=1}^{\infty}\Phi^{(4)}(n)
 &=16\pi i\bigl(819\zeta(7)-20\pi^2\zeta(5)\bigr)
   -96\bigl(243\zeta(8)+8\zeta(5,3)\bigr),\label{eq:Phi4}\\
 \sum_{n=1}^{\infty}\Phi^{(5)}(n)
 &=4320\bigl(208\zeta(9)-91\zeta(2)\zeta(7)
                 +10\zeta(4)\zeta(5)\bigr)\notag\\
 &\quad{}-240\pi i\bigl(665\zeta(8)+16\zeta(5,3)\bigr).\label{eq:Phi5}
\end{align}
\endgroup
For $\Psi$ defined by \eqref{eq:Psidef},
\begingroup\small
\begin{align}
 \sum_{n=1}^{\infty}\Psi(n)
 &=180\zeta(5)-\frac{56}{3}\pi^2\zeta(3),\label{eq:Psi0}\\
 \sum_{n=1}^{\infty}\Psi'(n)
 &=392\zeta(3)^2-190\zeta(6)
   +4\pi i\left(45\zeta(5)-\frac{14}{3}\pi^2\zeta(3)\right),\label{eq:Psi1}\\
 \sum_{n=1}^{\infty}\Psi''(n)
 &=240\bigl(16\zeta(7)-9\zeta(2)\zeta(5)
                    -14\zeta(3)\zeta(4)\bigr)\notag\\
 &\quad{}+4\pi i\bigl(196\zeta(3)^2-95\zeta(6)\bigr),\label{eq:Psi2}\\
 \sum_{n=1}^{\infty}\Psi^{(3)}(n)
 &=\frac{18}{5}\bigl(1711\zeta(8)+17360\zeta(3)\zeta(5)
       +2992\zeta(5,3)-3920\zeta(2)\zeta(3)^2\bigr)\notag\\
 &\quad{}+480\pi i\bigl(24\zeta(7)-9\zeta(2)\zeta(5)
                                -28\zeta(3)\zeta(4)\bigr),\label{eq:Psi3}\\
 \frac1{64}\sum_{n=1}^{\infty}\Psi^{(4)}(n)
 &=392\zeta(3)^3-3695\zeta(9)-4320\zeta(2)\zeta(7)\notag\\
 &\quad{}-1575\zeta(3)\zeta(6)+1215\zeta(4)\zeta(5)\notag\\
 &\quad{}+\frac{\pi i}{40}
  \bigl(5899\zeta(8)-23520\zeta(2)\zeta(3)^2\bigr)\notag\\
 &\quad{}+\frac{\pi i}{40}
  \bigl(156240\zeta(3)\zeta(5)+26928\zeta(5,3)\bigr),\label{eq:Psi4}\\
 \frac{1}{320\pi}\Im\sum_{n=1}^{\infty}\Psi^{(5)}(n)
 &=392\zeta(3)^3-3695\zeta(9)\notag\\
 &\quad{}-2880\zeta(2)\zeta(7)\notag\\
 &\quad{}-4662\zeta(3)\zeta(6).\label{eq:Psi5}
\end{align}
\endgroup
These are the corrected forms of Sun's Conjectures~4.4 and~4.5.
\end{corollary}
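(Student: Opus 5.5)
The corollary is a purely algebraic consequence of Lemma~\ref{lem:phase} and Theorem~\ref{thm:real}; no new analytic input is needed. For each $r$ substitute the values $R_q^U$ from \eqref{eq:RA01}--\eqref{eq:RA5} into \eqref{eq:phaseA}, and $R_q^V$ from \eqref{eq:RB0}--\eqref{eq:RB4} into \eqref{eq:phaseB}. Then separate real and imaginary parts. Since every $R_q$ is real, the real part collects the terms with $r-q$ even, where $(i\pi)^{r-q}=(-1)^{(r-q)/2}\pi^{r-q}$. The imaginary part collects the terms with $r-q$ odd. For instance,
\[
 \sum_n\Phi''(n)=R_2^U+2i\pi R_1^U-\pi^2R_0^U,
\]
which gives the real part $-219\zeta(6)+\pi^4\cdot\pi^2/45$ and the imaginary part $40\pi\zeta(5)$.

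To put the results in the displayed normal forms, I rewrite even powers of $\pi$ through $\pi^2=6\zeta(2)$, $\pi^4=90\zeta(4)$, $\pi^6=945\zeta(6)$, $\pi^8=9450\zeta(8)$. I then use the product relations $\zeta(2)\zeta(4)=\tfrac74\zeta(6)$, $\zeta(2)\zeta(6)=\tfrac53\zeta(8)$, $\zeta(4)^2=\tfrac76\zeta(8)$, together with $\zeta(2)\zeta(5)$, $\zeta(4)\zeta(5)$ and similar products kept as they stand. In the $\Phi''$ example this gives $\pi^6/45=21\zeta(6)$, so the real part is $-198\zeta(6)$, matching \eqref{eq:Phi2}. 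The remaining cases \eqref{eq:Phi0}--\eqref{eq:Phi5} and \eqref{eq:Psi0}--\eqref{eq:Psi4} are the same finite bookkeeping, with up to six terms each.

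The case $\Psi^{(5)}$ is handled differently. Only its imaginary part is claimed, and by \eqref{eq:dependentB} it equals $\pi^5R_0^V-10\pi^3R_2^V+5\pi R_4^V$, so $R_5^V$ is never needed. Likewise the imaginary part of \eqref{eq:Phi5} comes from \eqref{eq:dependentA}. In these two cases I divide by $\pi$ and convert $\pi^4$ and $\pi^2$ into zeta values before collecting terms.

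The main obstacle is not conceptual but the error-prone normalization at weights eight and nine. There, $\pi^2\zeta(3)^2$, $\pi^4\zeta(5)$, $\pi^2\zeta(7)$ and $\pi^6\zeta(3)$ must be matched against the printed normal forms, which mix $\pi$ and $\zeta(2k)$. For example, \eqref{eq:Psi4} keeps a factor $\pi i$ outside and has an overall factor $1/64$. I would first express both sides in the basis of products of $\zeta(2)^a$ with odd zetas and $\zeta(5,3)$, and compare coefficients. I would carry this out with exact rational arithmetic, for example as a short symbolic check, rather than by hand.

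This comparison also confirms the three imaginary-part corrections listed in Proposition~\ref{prop:errata}(ii)--(iii). Those corrections require only $R_0,R_1,R_2,R_4$, so they are independent of the longest weight-nine reductions.
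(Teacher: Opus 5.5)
Your proposal is correct and is the paper's own argument: substitute Theorem~\ref{thm:real} into Lemma~\ref{lem:phase}, and read off the imaginary parts of \eqref{eq:Phi5} and \eqref{eq:Psi5} directly from \eqref{eq:dependentA} and \eqref{eq:dependentB}, so that $R_5^V$ is never needed. Carrying out the bookkeeping with $\pi^2=6\zeta(2)$ and the even-zeta product relations reproduces every displayed real and imaginary part exactly.
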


\begin{proof}
Apply Lemma~\ref{lem:phase} to Theorem~\ref{thm:real}.  Equations
\eqref{eq:Phi5} and \eqref{eq:Psi5} also follow directly from
\eqref{eq:dependentA} and \eqref{eq:dependentB}, respectively.
\end{proof}

\section{The inverse-\texorpdfstring{$4096$}{4096} branch: Sun's Conjecture~4.6}
\label{sec:f6}

Set
\begin{align}
 P_6(x)&=5460x^4-8341x^3+4864x^2-1280x+128,
 \label{eq:f6-P6}\\
 f_6(x)&=\frac{P_6(x)\Gamma(4x-1)\Gamma(x)^{16}}
 {256\Gamma(2x)^{10}},
 \label{eq:f6-def}\\
 S_6(u)&=\sum_{n=1}^{\infty}f_6(n+u).
 \label{eq:f6-S6}
\end{align}
The characteristic data in Table~\ref{tab:characteristic} give
\[
 \chi_r^{(6)}=4^r-10\cdot2^r+16=(2^r-2)(2^r-8),
 \qquad \mathscr B_6=4096^{-1}.
\]
The universal prefactor of Lemma~\ref{lem:balanced-prefactor} is
\begin{align}
 C_6(u)&=\frac{\Gamma(1+4u)\Gamma(1+u)^{16}}
 {\Gamma(1+2u)^{10}},\label{eq:f6-C6}\\
 \log C_6(u)&=\sum_{r\ge2}(-1)^r
 \frac{4^r-10\cdot2^r+16}{r}\zeta(r)u^r.
\end{align}

The quartic in Au's first $1/\pi^4$ example is
\[
 Q(x)=43680x^4+20632x^3+4340x^2+466x+21.
\]
The half-integer shift needed below is the exact polynomial identity
\begin{equation}\label{eq:f6-polynomial-shift}
 Q\!\left(x-\frac12\right)=8P_6(x).
\end{equation}

\subsection{The one-parameter bridge}
Define
\begin{equation}\label{eq:f6-G6}
 \mathcal G_6(u)=2\frac{1+4u}{(1+2u)^4}
 \frac{\Gamma(1+4u)\Gamma(1+u)^{16}}
 {\Gamma(1+2u)^{10}}.
\end{equation}
For $m\ge1$, put
\begin{align}
 \alpha_m(u)={}&m^3
 \frac{\Gamma(1-u)\Gamma(2+2u)^5}
 {\Gamma(1+u)^5\Gamma(3+4u)}(2m+3u)\notag\\
 &\quad\times
 \frac{\Gamma(m+u)^5\Gamma(m+1+4u)}
 {\Gamma(m-u)\Gamma(m+1+2u)^5},
 \label{eq:f6-alpha}\\
 \beta_m(u)={}&m^3
 \frac{\Gamma(1-u)(1+2u)^4\Gamma(1+2u)^8}
 {(1+4u)\Gamma(1+u)^9\Gamma(1+4u)}\notag\\
 &\quad\times
 \frac{\Gamma(m+u)^3\Gamma(m+2u)\Gamma(m+1+4u)}
 {\Gamma(m)\Gamma(m+1+2u)\Gamma(m+1+3u)^3}.
 \label{eq:f6-beta}
\end{align}
Both satisfy
\begin{equation}\label{eq:f6-normalization}
 \alpha_m(0)=\beta_m(0)=1.
\end{equation}

We record the algebra joining Au's two identities to these functions.
For a complex parameter $a$, let
\begin{align*}
 A_k(a)&=
 \frac{(6a+4k+1)(a+\frac12)_k^5(4a+1)_k}
 {(1+2k-2a)(4a+k)(\frac12-a)_k(2a+1)_k^5},\\
 B_n(a)&=
 \frac{(-1)^n p(n+a)\Gamma(\frac12-a)\Gamma(2a+1)^5
 (a+\frac12)_n^{10}}
 {4096a^5\Gamma(a+\frac12)^5\Gamma(4a+1)(2a+1)_{2n}^5},\\
 C_n(a)&=
 \frac{Q(n+a)(a+\frac12)_n^{16}(1+4a)_{4n}}
 {65536a^5(1+2a)_{2n}^{10}},
 \qquad p(x)=820x^2+180x+13.
\end{align*}
Au's Example~I gives, initially in a common domain of absolute
convergence and then by meromorphic continuation,
\begin{equation}\label{eq:f6-example-I}
 \sum_{k\ge0}A_k(a)-\sum_{n\ge0}B_n(a)
 =-\sum_{n\ge0}C_n(a).
\end{equation}

For the diagonal specialization of Au's Example~VI, define
\begin{align}
 H_j(a)&=
 \frac{(a+\frac12)_j^3(2a)_j(4a)_j}
 {j!(2a+1)_j(3a+\frac12)_j^3},
 \label{eq:f6-Hj}\\
 \mathcal A(a)&=
 \frac{128\Gamma(2a+1)^3\Gamma(4a+1)}
 {\Gamma(a+\frac12)\Gamma(3a+\frac12)^3},
 \label{eq:f6-Adiag}\\
 M(a)&=
 \frac{\Gamma(\frac12-a)\Gamma(2a+1)^5}
 {4096a^5\Gamma(a+\frac12)^5\Gamma(4a+1)}.
 \label{eq:f6-Ma}
\end{align}
After cancellation in the diagonal formula, Example~VI is equivalently
\begin{equation}\label{eq:f6-example-VI}
 \mathcal A(a)\sum_{j\ge0}H_j(a)
 =\sum_{n\ge0}(-1)^n p(n+a)
 \frac{(a+\frac12)_n^{10}}{(2a+1)_{2n}^5}.
\end{equation}
The initial series on the WZ side is converted to the displayed
${}_5F_4(1)$ by the very-well-poised transformation used in
\cite[Example~VI]{AuSeeds2025}.  The WZ component itself is independently
checked by the second ancillary rational certificate.  In particular,
\eqref{eq:f6-example-VI} does not require reproducing Au's $608$-term
multivariate polynomial.

\begin{lemma}[Exact bridge algebra]\label{lem:f6-bridge-algebra}
Set $a=\frac12+u$.  For every integer $m\ge1$ for which the expressions
are defined,
\begin{align}
 A_{m-1}(a)&=-\frac{\alpha_m(u)}{u m^3},
 \label{eq:f6-map-A}\\
 C_{m-1}(a)&=\frac{f_6(m+u)}{\mathcal G_6(u)},
 \label{eq:f6-map-C}\\
 M(a)\mathcal A(a)H_{m-1}(a)&=-\frac{\beta_m(u)}{u m^3}.
 \label{eq:f6-map-B}
\end{align}
Consequently, wherever the series identities above are valid,
\begin{equation}\label{eq:f6-master}
 S_6(u)=\mathcal G_6(u)\sum_{m=1}^{\infty}
 \frac{\alpha_m(u)-\beta_m(u)}{u m^3}.
\end{equation}
\end{lemma}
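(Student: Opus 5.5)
The plan is to verify the three identities \eqref{eq:f6-map-A}--\eqref{eq:f6-map-B} as separate gamma-function identities, and then assemble \eqref{eq:f6-master} by substituting them into \eqref{eq:f6-example-I} and \eqref{eq:f6-example-VI}. With $a=\tfrac12+u$ we have $a+\tfrac12=1+u$, $\tfrac12-a=-u$, $2a+1=2+2u$, $4a+1=3+4u$, $4a=2+4u$, $2a=1+2u$ and $3a+\tfrac12=2+3u$. In each identity I write every Pochhammer symbol as a gamma quotient, for instance $(1+u)_{m-1}=\Gamma(m+u)/\Gamma(1+u)$ and $(-u)_{m-1}=\Gamma(m-1-u)/\Gamma(-u)$, and compare the result with \eqref{eq:f6-alpha}, \eqref{eq:f6-beta} and \eqref{eq:f6-def}.

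\textbf{Identity \eqref{eq:f6-map-A}.} The linear factors are $6a+4k+1=2(2m+3u)$, $1+2k-2a=2m-2-2u$ and $4a+k=m+1+4u$. Two shifts are needed. The factor $2m-2-2u$ combines with $\Gamma(m-1-u)$ to give $2\Gamma(m-u)$. The factor $\Gamma(-u)=-\Gamma(1-u)/u$ produces the $-1/u$ on the right-hand side. The rest is bookkeeping: $\Gamma(m+1+4u)$ should come out as $(3+4u)_{m-1}(m+1+4u)$ against $\Gamma(3+4u)$, and $\Gamma(m+1+2u)^5$ against $\Gamma(2+2u)^5$. The factor $m^3$ in $\alpha_m$ is then exactly cancelled by the $m^3$ on the right.

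\textbf{Identity \eqref{eq:f6-map-C}.} Using \eqref{eq:f6-polynomial-shift}, $Q(m-1+a)=Q(m+u-\tfrac12)=8P_6(m+u)$. Next, $(1+u)_{m-1}^{16}=\Gamma(m+u)^{16}/\Gamma(1+u)^{16}$, $(2+2u)_{2m-2}=\Gamma(2m+2u)/\Gamma(2+2u)$ and $(3+4u)_{4m-4}=\Gamma(4m-1+4u)/\Gamma(3+4u)$. The ratio of $f_6(m+u)$ to $C_{m-1}(a)$ should then reduce to constants in $u$ alone. These are the rational factors $(1+2u)^{10}$ and $(1+4u)(2+4u)$ coming from the shifted gammas, together with the powers $65536\cdot a^5/(8\cdot256)$. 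Since $a^5=(1+2u)^5/32$, these constants should assemble exactly into $\mathcal G_6(u)$ in \eqref{eq:f6-G6}.

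\textbf{Identity \eqref{eq:f6-map-B}.} This is the same kind of computation applied to $M(a)\mathcal A(a)H_{m-1}(a)$. The factor $\Gamma(\tfrac12-a)=\Gamma(-u)$ again supplies the $-1/u$. I will use $(1+2u)_{m-1}=\Gamma(m+2u)/\Gamma(1+2u)$ and $(m-1)!=\Gamma(m)$. For the factor $(2+3u)_{m-1}^3$ combined with $\Gamma(2+3u)^3$, I will use $\Gamma(2+3u)=(1+3u)\Gamma(1+3u)$.

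I expect this third identity to be the main obstacle. One reason is the sheer number of constants: $128/4096$, $a^{-5}$, and the ratios $\Gamma(2+2u)/\Gamma(1+2u)$, which must collapse to $(1+2u)^4/(1+4u)$ together with the normalization $\beta_m(0)=1$. The other is the factor $\Gamma(m+1+3u)^3$ in \eqref{eq:f6-beta}, which has to be matched against $\Gamma(m+1+3u)$ coming from $(2+3u)_{m-1}$. I would check all three identities first by verifying the ratio of consecutive terms in $m$, which is a rational function identity, and then separately at $m=1$. This also serves as a safeguard against exponent typos, and it matches the paper's style of exact rational certificates.

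\textbf{Assembling \eqref{eq:f6-master}.} Multiply \eqref{eq:f6-example-VI} by $M(a)$. Its right-hand side then becomes $\sum_n B_n(a)$, because the prefactor of $B_n$ is exactly $M(a)$ times $(-1)^np(n+a)(a+\tfrac12)_n^{10}/(2a+1)_{2n}^5$. Hence $\sum_n B_n=\sum_j M\mathcal A H_j$. Substituting this into \eqref{eq:f6-example-I} gives $\sum_n C_n=\sum_j\bigl(M\mathcal A H_j-A_j\bigr)$. Reindexing with $m=j+1$ and applying \eqref{eq:f6-map-A} and \eqref{eq:f6-map-B} turns the right-hand side into $\sum_m(\alpha_m-\beta_m)/(um^3)$. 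By \eqref{eq:f6-map-C} the left-hand side is $S_6(u)/\mathcal G_6(u)$. Multiplying through by $\mathcal G_6(u)$ gives \eqref{eq:f6-master}, valid wherever the series identities hold, by meromorphic continuation.
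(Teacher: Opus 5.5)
Your proposal is correct and matches the paper's proof: the three term identities come from writing each Pochhammer symbol as a gamma quotient and applying the gamma recurrence (including $\Gamma(-u)=-\Gamma(1-u)/u$) together with the shift $Q(x-\frac12)=8P_6(x)$, and the master formula follows by multiplying the Example~VI identity by $M(a)$ to identify $\sum_n B_n(a)$, substituting into the Example~I identity, and reindexing with $m=j+1$. Two small bookkeeping slips. In the first identity the relation is the quotient $\Gamma(m+1+4u)/\Gamma(3+4u)=(3+4u)_{m-1}/(m+1+4u)$, not a product. In the third, the $\Gamma(2+3u)^3$ from $\mathcal A(a)$ cancels directly against the one from $(2+3u)_{m-1}^3$, so $\Gamma(1+3u)$ never appears and the factor $1+4u$ comes from $\Gamma(2+4u)$.
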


\begin{proof}
Equations~\eqref{eq:f6-map-A}--\eqref{eq:f6-map-B} follow by the gamma
recurrence, the reflection formula, and
\eqref{eq:f6-polynomial-shift}.  They are also checked symbolically in
\path{verify_f6_bridge_algebra.py}.  Multiplying
\eqref{eq:f6-example-VI} by $M(a)$ shows that the second series in
\eqref{eq:f6-example-I} is the sum of the left sides of
\eqref{eq:f6-map-B}.  Substitute all three mappings into
\eqref{eq:f6-example-I} and rearrange.
\end{proof}

\subsection{Analytic justification}
Fix
\[
 0<\rho<\frac1{16},
 \qquad D_\rho=\{u\in\mathbb C:|u|<\rho\}.
\]

\begin{lemma}[Uniform gamma quotients]\label{lem:f6-uniform-gamma}
Let $K\subset\mathbb C^r$ be compact.  Uniformly for
$(a_1,\ldots,a_r,b_1,\ldots,b_r)\in K$ and $m\to\infty$,
\[
 \prod_{j=1}^r\frac{\Gamma(m+a_j)}{\Gamma(m+b_j)}
 =m^{\sum_j(a_j-b_j)}\bigl(1+O_K(m^{-1})\bigr).
\]
After any fixed number of parameter derivatives, the same estimate holds
with an additional factor $(\log m)^q$.
\end{lemma}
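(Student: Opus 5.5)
The plan is to reduce everything to the logarithm of the product and use the standard Stirling-type expansion of $\log\Gamma(m+a)$ with the shift taken uniformly in the parameter. Concretely, I will use the form of Stirling's formula
\[
 \log\Gamma(m+a)=(m+a-\tfrac12)\log m-m+\tfrac12\log(2\pi)+\varepsilon(m,a),
\]
where $\varepsilon(m,a)$ is holomorphic in $a$ and satisfies $\varepsilon(m,a)=O_{K'}(m^{-1})$ uniformly for $a$ in a compact set $K'\subset\mathbb C$. This follows from the integral (Binet) remainder representation applied at $z=m+a$. For $m$ larger than $2\max_{K'}|a|$, the point $m+a$ stays in a sector $|\arg z|\le\pi/2$ away from the poles, and $\log(1+a/m)=a/m+O(m^{-2})$ is uniform. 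I would take $K'$ to be the union of the coordinate projections of $K$.

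Subtracting the expansions for $\Gamma(m+a_j)$ and $\Gamma(m+b_j)$ and summing over $j$, the terms $-m+\tfrac12\log(2\pi)-\tfrac12\log m$ cancel pairwise. What remains is
\[
 \sum_j(a_j-b_j)\log m+E(m),\qquad E(m)=\sum_j\bigl(\varepsilon(m,a_j)-\varepsilon(m,b_j)\bigr)=O_K(m^{-1}).
\]
Exponentiating and using $e^{E}=1+O(E)$ for bounded $E$ gives the main estimate. The finitely many small $m$ are irrelevant to an asymptotic statement. Only the condition that no $m+a_j$ or $m+b_j$ hits a pole is needed for $m\ge m_0(K)$.

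For the derivative clause, I would write the quotient as $m^{\sigma}e^{E(m;\mathbf a,\mathbf b)}$, where $\sigma=\sum_j(a_j-b_j)$. Any parameter derivative of $m^\sigma$ produces factors of $\log m$. Derivatives of $E$ are handled by Cauchy's estimates: $E$ is holomorphic in the parameters on a slightly larger compact neighbourhood $K_\delta$, where the bound $O_{K_\delta}(m^{-1})$ still holds. So every fixed-order parameter derivative of $E$ is also $O(m^{-1})$, uniformly on $K$. By the Leibniz and Faà di Bruno rules, a derivative of order $q$ of $m^\sigma e^{E}$ equals $m^\sigma$ times a polynomial of degree at most $q$ in $\log m$ with bounded coefficients, plus $O(m^{-1})$ corrections. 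This gives the stated bound with an additional factor $(\log m)^q$.

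The main obstacle is the uniformity of the Stirling remainder in a complex shift $a$, including its holomorphy, so that Cauchy's estimates apply. I would handle this by citing Binet's second formula, or the bound $|\varepsilon(z)|\le C/|z|$ valid in $|\arg z|\le\pi-\delta$, and noting that $m+a$ lies in $\Re z\ge m/2$ for large $m$.
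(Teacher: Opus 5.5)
Your proof is correct and follows the same route as the paper, which simply invokes the uniform Stirling expansion for the quotient and uniform polygamma asymptotics for the parameter derivatives. The one difference is minor: you get the derivative bounds from Cauchy's estimates on the holomorphic Stirling remainder over a slightly enlarged compact set, instead of quoting $\psi(m+a)=\log m+O(m^{-1})$ and $\psi^{(k)}(m+a)=O(m^{-k})$ — and your argument in effect proves those polygamma asymptotics.
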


\begin{proof}
This is the uniform quotient form of Stirling's expansion.  The derivative
statement follows from the corresponding uniform polygamma asymptotics.
\end{proof}

\begin{lemma}[Normal convergence of the target series]
\label{lem:f6-S6-normal}
The series \eqref{eq:f6-S6} converges normally on $D_\rho$.  Hence $S_6$
is holomorphic there and every derivative may be taken termwise.
\end{lemma}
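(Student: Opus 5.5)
To prove Lemma~\ref{lem:f6-S6-normal}, I would factor each summand with the balanced-quotient decomposition of Lemma~\ref{lem:balanced-prefactor} and then bound the $n$-dependent kernel uniformly in $u\in D_\rho$.

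First rewrite $\Gamma(4x-1)=\Gamma(4x)/(4x-1)$, so that
\[
 f_6(x)=\frac{P_6(x)}{256(4x-1)}\,\Gamma(4x)\Gamma(x)^{16}\Gamma(2x)^{-10}
\]
is a balanced quotient with exponent data $(1;16),(2;-10),(4;1)$. For $x=n+u$ with $n\ge1$ and $|u|<\rho<1/16$, the rational prefactor is holomorphic: $4x-1\ne0$ because $\Re(4x)\ge 4-4\rho>1$. It is also bounded by a constant times $n^3$.

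Next, for the gamma part I would not use the factorization $C_{\mathcal G}K_{\mathcal G}$; I would apply Stirling's formula directly to the uniform quotient. By Lemma~\ref{lem:f6-uniform-gamma} (or Stirling uniformly on $\Re x\ge 1-\rho$),
\[
 \log\Gamma(ax)=ax\log(ax)-ax-\tfrac12\log(ax)+\tfrac12\log 2\pi+O(1/|x|).
\]
Summing with weights $e_i$, the $x\log x$ and $x$ terms cancel because $\chi_1=0$. What remains is $x\sum_i e_ia_i\log a_i=x\log\mathscr B_6$, plus $-\tfrac12\sum_i e_i\log x$ and bounded terms. Hence
\[
 \bigl|\Gamma(4x)\Gamma(x)^{16}\Gamma(2x)^{-10}\bigr|\le C\,4096^{-\Re x}\,|x|^{\kappa}
\]
with $\kappa=-\tfrac12(16-10+1)$ and constants uniform on the strip $|\Im x|<\rho$, $\Re x\ge 1-\rho$. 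This is Proposition~\ref{prop:base} made uniform in $u$. Since $\Re x\ge n-\rho$, I obtain
\[
 \sup_{u\in D_\rho}|f_6(n+u)|\le C'\,n^{3+\kappa}\,4096^{-n},
\]
which is summable, so the series converges normally.

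Finally, holomorphy of $S_6$ follows from Weierstrass's theorem, since each term is holomorphic on $D_\rho$: there are no poles, because $\Gamma(2x)$ has no zeros and $\Gamma(x)$, $\Gamma(4x)$ have no poles for $\Re x>0$. Termwise differentiation is justified by Cauchy estimates on a slightly smaller disc, or by the same normal-convergence argument, since locally uniform limits of holomorphic functions can be differentiated termwise.

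The only delicate point is uniformity of the Stirling remainder in complex $u$, i.e.\ making sure the $O(1/|x|)$ bound holds on the whole half-strip and not just for real $x$. This is standard because the half-strip stays inside a sector $|\arg x|<\pi/2$, where Stirling's formula holds uniformly.
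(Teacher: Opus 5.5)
Your proposal is correct and follows essentially the same route as the paper. Both arguments use a uniform Stirling estimate for the gamma quotient; you write $\Gamma(4x-1)=\Gamma(4x)/(4x-1)$ and get $4096^{-n}n^{-7/2}$ times a rational factor of size $n^{3}$, while the paper keeps $\Gamma(4x-1)$ and states $O_\rho(4096^{-n}n^{-9/2})$ times $P_6=O_\rho(n^4)$. Both then reach $|f_6(n+u)|\le C_\rho 4096^{-n}n^{-1/2}$ and conclude with the Weierstrass $M$-test and the theorem on normally convergent series of holomorphic functions.
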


\begin{proof}
Uniformly for $|u|\le\rho$, Stirling's formula gives
\[
 \frac{\Gamma(4n+4u-1)\Gamma(n+u)^{16}}
 {\Gamma(2n+2u)^{10}}
 =O_\rho\!\left(4096^{-n}n^{-9/2}\right).
\]
Since $P_6(n+u)=O_\rho(n^4)$,
\[
 |f_6(n+u)|\le C_\rho 4096^{-n}n^{-1/2}.
\]
The Weierstrass $M$-test proves normal convergence; the standard theorem
for normally convergent series of holomorphic functions then gives the
assertion.
\end{proof}

\begin{lemma}[Removal of the apparent pole]\label{lem:f6-removable}
For $m\ge1$, define
\[
 h_m(u)=
 \begin{cases}
 \dfrac{\alpha_m(u)-\beta_m(u)}{u m^3},&u\ne0,\\[2mm]
 \dfrac{\alpha_m'(0)-\beta_m'(0)}{m^3},&u=0.
 \end{cases}
\]
Then every $h_m$ is holomorphic on $D_\rho$, and
$\sum_{m\ge1}h_m(u)$ converges normally there.
\end{lemma}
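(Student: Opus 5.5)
We must show two things: each $h_m$ is holomorphic on $D_\rho$, and the series $\sum_m h_m$ has a summable uniform majorant there.

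\emph{Holomorphy of each $h_m$.} First check that $\alpha_m$ and $\beta_m$ are themselves holomorphic on $D_\rho$. With $|u|<\rho<1/16$, every gamma argument in the numerators of \eqref{eq:f6-alpha} and \eqref{eq:f6-beta} has positive real part once $m\ge1$. These arguments are $1-u$, $2+2u$, $m\pm u$, $m+1+4u$, $m+2u$ and $m+u$, so no numerator gamma factor has a pole. Each denominator gamma factor is entire in its reciprocal, and the rational factor $1/(1+4u)$ is harmless on $D_\rho$. Hence $\alpha_m$ and $\beta_m$ are holomorphic on $D_\rho$. By \eqref{eq:f6-normalization}, $\alpha_m(0)-\beta_m(0)=0$, so $u=0$ is a removable singularity of $(\alpha_m-\beta_m)/(um^3)$. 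The value assigned to $h_m(0)$ is exactly the limit there, so $h_m$ is holomorphic on $D_\rho$.

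\emph{Reduction of the majorant to one for $\alpha_m-\beta_m$.} Since $h_m$ is holomorphic on the slightly larger disc $D_{\rho'}$ with $\rho<\rho'<1/16$, the maximum principle applied on the circle $|u|=\rho'$ gives
\[
 \sup_{D_\rho}|h_m|\le \frac{1}{\rho' m^3}\sup_{|u|=\rho'}|\alpha_m(u)-\beta_m(u)|.
\]
So it suffices to bound $|\alpha_m|$ and $|\beta_m|$ uniformly on the closed disc $|u|\le\rho'$ by $O(m^{\kappa})$ with $\kappa<2$.

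\emph{Growth of $\alpha_m$ and $\beta_m$.} The $m$-independent prefactors are continuous on the compact closed disc, hence bounded. For the $m$-dependent gamma quotients, apply Lemma~\ref{lem:f6-uniform-gamma} with $K=\{|u|\le\rho'\}$. For $\alpha_m$ the quotient $\Gamma(m+u)^5\Gamma(m+1+4u)/(\Gamma(m-u)\Gamma(m+1+2u)^5)$ has exponent $5u+(1+4u)-(-u)-5(1+2u)=-4$. Together with $m^3(2m+3u)$ this gives $|\alpha_m|=O(m^{0})=O(1)$, uniformly on the disc. For $\beta_m$ the exponent is $3u+2u+1+4u-0-(1+2u)-3(1+3u)=-2u-3$. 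With $m^3$ this gives $|\beta_m|=O(m^{2\rho'})$, where $2\rho'<1/8$. Both bounds are far below the needed $m^{2}$, so $\sup_{D_\rho}|h_m|=O(m^{-3+1/8})$. This is summable, and the Weierstrass $M$-test gives normal convergence.

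\emph{Main obstacle.} The only delicate point is bookkeeping. One must check that the exponents in Lemma~\ref{lem:f6-uniform-gamma} have real parts uniformly bounded on the disc, which holds because $\Re(-2u)\le 2\rho'$. One must also check that the estimate is applied for $m$ large, with the finitely many small $m$ handled by continuity on the compact disc. No derivative estimates are needed, because the Cauchy/maximum-principle step converts the bound on $\alpha_m-\beta_m$ directly into a bound on $h_m$.
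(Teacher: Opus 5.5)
Your proof is correct, but at the key step it takes a different route from the paper's.

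\emph{The paper's route.} The paper writes $h_m(u)=\frac{1}{m^3u}\int_0^u(\alpha_m'-\beta_m')$ along the segment from $0$ to $u$. This needs the derivative clause of Lemma~\ref{lem:f6-uniform-gamma}, i.e.\ uniform polygamma asymptotics, to get $|\alpha_m'|+|\beta_m'|\le C_\rho m^{2\rho}\log(em)$. That gives $|h_m|\le C_\rho\log(em)\,m^{-3+2\rho}$.

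\emph{Your route.} You apply the maximum modulus principle to $h_m$ on a slightly larger circle $|u|=\rho'$. There the troublesome factor $1/u$ is just $1/\rho'$, so only the zeroth-order quotient asymptotics are needed. Your exponent computations ($-4$ for $\alpha_m$, $-3-2u$ for $\beta_m$) agree with the paper's $A(u)(1+O(m^{-1}))$ and $B(u)m^{-2u}(1+O(m^{-1}))$. You also obtain the log-free majorant $m^{-3+2\rho'}$.

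\emph{Comparison.} The paper's route works directly on $D_\rho$ with no enlargement, at the cost of derivative estimates and a harmless logarithmic loss. Your route is more elementary. It trades those estimates for a margin $\rho<\rho'<1/16$, which is freely available here.

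\emph{Things to fix in the write-up.}
\begin{itemize}
\item You should say explicitly that your holomorphy argument works verbatim on all of $|u|<1/16$. As written it is stated only for $|u|<\rho$, yet the maximum-principle step uses holomorphy on a neighbourhood of $|u|\le\rho'$.
\item In your list of gamma arguments, $\Gamma(m-u)$ occurs only in a denominator (of $\alpha_m$).
\item You omitted the numerator factor $\Gamma(1+2u)^8$ of $\beta_m$. This is harmless, since $\Re(1+2u)>7/8$ on the disc.
\end{itemize}
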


\begin{proof}
The normalization \eqref{eq:f6-normalization} removes the singularity for
each fixed $m$.  Lemma~\ref{lem:f6-uniform-gamma} gives, uniformly on the
closed disk,
\[
 \alpha_m(u)=A(u)\bigl(1+O_\rho(m^{-1})\bigr),
 \qquad
 \beta_m(u)=B(u)m^{-2u}\bigl(1+O_\rho(m^{-1})\bigr),
\]
where $A$ and $B$ are bounded and holomorphic.  One parameter derivative
therefore satisfies
\[
 |\alpha_m'(u)|+|\beta_m'(u)|
 \le C_\rho m^{2\rho}\log(em).
\]
Using the integral along the segment from $0$ to $u$ gives
\[
 |h_m(u)|\le C_\rho\frac{\log(em)}{m^{3-2\rho}},
\]
and the majorant is summable.
\end{proof}

The exact ancillary certificates give pairs
$G_j(n,k)=R_j(n,k)F_j(n,k)$, $j=1,2$, satisfying
\[
 F_j(n+1,k)-F_j(n,k)=G_j(n,k+1)-G_j(n,k)
\]
as rational-function identities after cancellation.  The two seeds are
\begin{align}
 F_1(n,k)&=
 \frac{(2k+3n+\frac12)\Gamma(1-2n)^3\Gamma(n+\frac12)^3
 \Gamma(k+n+\frac12)^5\Gamma(k+4n)}
 {\Gamma(\frac12-n)^7\Gamma(2n)^2
 \Gamma(k-n+\frac32)\Gamma(k+2n+1)^5},
 \label{eq:f6-F1}\\
 F_2(n,k)&=
 \frac{(4k+6n+1)\Gamma(k+n+\frac12)^4\Gamma(n+\frac12)^3}
 {\Gamma(\frac12-n)^3\Gamma(k+2n+1)^4\Gamma(2n)}.
 \label{eq:f6-F2}
\end{align}
Their rational mates are printed in the ancillary certificate file.  The
first numerator has $62$ monomials, as in Au's Example~I.  The second is
the diagonal specialization of the seed printed in Au's Example~VI.

\begin{lemma}[WZ boundary conditions]\label{lem:f6-wz-boundaries}
Let $a=\frac12+u$ with $0<|u|<\rho$.  Both shifted pairs
$F_j(n+a,k),G_j(n+a,k)$ satisfy the hypotheses of
\cite[Proposition~2.1]{AuSeeds2025}.  In particular,
\[
 \lim_{N\to\infty}\sum_{k\ge0}F_j(N+a,k)=0.
\]
For the second pair, $\lim_{k\to\infty}G_2(n+a,k)=0$.  For the first pair,
the boundary value exists and its sum over $n$ converges absolutely.
\end{lemma}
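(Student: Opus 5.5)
The plan is to verify the hypotheses of \cite[Proposition~2.1]{AuSeeds2025} directly, using the explicit gamma forms \eqref{eq:f6-F1}--\eqref{eq:f6-F2} and the uniform Stirling estimate of Lemma~\ref{lem:f6-uniform-gamma}. For $a=\frac12+u$ with $0<|u|<\rho$, the arguments $\frac12-n-a=-n-u$ and $1-2(n+a)=-2n-2u$ are never nonpositive integers. Hence $\Gamma(1-2(n+a))^3/\Gamma(\frac12-n-a)^7$ and the corresponding factor in $F_2$ are finite. Via the reflection formula they equal, up to bounded trigonometric factors in $u$, reciprocals of gamma functions at positive arguments. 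I will therefore first use reflection to rewrite each shifted $F_j(n+a,k)$ as a product of a fixed meromorphic function of $u$ (with no pole on $0<|u|<\rho$) and a ratio of gamma functions with arguments of the form $\lambda n+\mu k+O(1)$ with $\lambda,\mu\ge0$. The rational mates $R_j$ are taken from the certificate file, and $G_j=R_jF_j$. I only need their degrees in $k$ and $n$, and the fact that their denominators do not vanish on the shifted lattice when $u\ne0$ is small.

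\emph{Vanishing as $k\to\infty$.} For fixed $n$, Lemma~\ref{lem:f6-uniform-gamma} gives the $k$-power of $F_j(n+a,k)$ from the balance of the gamma arguments in $k$. In $F_2$ this is
\[
k^{1+4(n+a+\frac12)-4(2n+2a+1)}=k^{-4n-4a-1}.
\]
Multiplying by $R_2$, which has at most polynomial growth of fixed degree, still gives a quantity tending to $0$ for $n\ge0$ and $\Re a$ near $\frac12$; this proves $G_2(n+a,k)\to0$. For $F_1$ the same count gives exponent
\[
1+5(n+a)+\tfrac52+4(n+a)-\bigl(\tfrac32-(n+a)\bigr)-5\bigl(2(n+a)+1\bigr)=-2.
\]
This is independent of $n$, so $F_1(n+a,k)=O(k^{-2})$ and $\sum_k F_1$ converges. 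However, $G_1=R_1F_1$ need not vanish, because $R_1$ has the same degree in $k$ as the $62$-monomial numerator. Here I will show that $\lim_k G_1(n+a,k)$ exists: it is the leading-coefficient ratio times the $n$-dependent gamma prefactor. This limit is a hypergeometric term in $n$ with base $4^{-?}$. By Stirling, that limit is of the size of the $B_n(a)$ term and decays like $4096^{-n}$ times a power of $n$, so its sum over $n$ converges absolutely. It produces the $B_n$ series in \eqref{eq:f6-example-I}.

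\emph{Vanishing as $N\to\infty$.} I will bound $\sum_{k\ge0}|F_j(N+a,k)|$ uniformly. Write $k=xN$ and apply Stirling in both variables. The exponential rate $\phi_j(x)$, a combination of terms of the form $t\log t$, has to be strictly negative for all $x\ge0$, or else the polynomial factors must force decay where $\phi_j=0$. For $F_2$ the $N$-dependent factor $\Gamma(N+a+\frac12)^3/\Gamma(\frac12-N-a)^3\Gamma(2N+2a)$ becomes, after reflection, $\Gamma(N+\cdot)^6/\Gamma(2N+\cdot)$, which grows like $N!^4 4^{-N}$. This is offset by $\Gamma(k+N)^4/\Gamma(k+2N)^4$, and the combined summand behaves like $\binom{k+2N}{N}^{-4}\cdot N!^{\,\cdot}$. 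I will check that the maximum over $x$ of the rate is negative, most likely attained at $x=0$ with value $-\log 4$ or similar. Then the tail in $k$ is handled by the $k^{-4N}$ decay above, which is uniform once $N\ge1$. $F_1$ is treated the same way; the factor $\Gamma(k+4N)/\Gamma(k-N+\frac32)$ is the only new ingredient.

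\emph{Main obstacle.} The hard step is this uniform two-variable Stirling estimate for $F_1$: the exponential rate must be negative across all $x\in[0,\infty)$, including the transition region $k\approx N$. I would handle it by a ratio test: compute $F_1(N+a,k+1)/F_1(N+a,k)$ and $F_1(N+1+a,k)/F_1(N+a,k)$ as rational functions, and show that along rays their limits give a concave rate function whose supremum is negative. If this fails near $x=0$, I would instead bound $\sum_kF_1(N+a,k)$ by its first $O(N)$ terms, together with a geometric tail controlled by the $k$-ratio.
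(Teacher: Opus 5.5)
Your route matches the paper's. You use fixed-$n$ asymptotics in $k$ for the initial series and for $\lim_k G_j$, and the leading $k$-coefficient of the rational certificate for the nonzero limit of $G_1$. For the terminal limit you use a two-variable Stirling rate with $k=xN$; this is the exponential-rate criterion of \cite[Proposition~3.4]{AuSeeds2025} that the paper invokes.

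\textbf{The gap.} The proposal never carries out the computation that gives the lemma its content. No rate function is computed, and for $F_1$ you give only a strategy with a fallback. What is needed is the following.
\begin{itemize}
\item Reflect $\Gamma(-2N-2u)^3/\Gamma(-N-u)^7$, and also $1/\Gamma(k-N+1-u)$ when $k<N$. This gives
\[
 \mathcal E_1(x)=\frac{|x-1|^{1-x}(x+1)^{5x+5}(x+4)^{x+4}}{1024(x+2)^{5x+10}},
 \qquad
 \mathcal E_2(x)=\frac{(x+1)^{4x+4}}{4(x+2)^{4x+8}}.
\]
The factor $|x-1|^{1-x}$ comes from the forced case split $k<N$ versus $k\ge N$. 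So the uniform rewrite you announce, with gamma arguments $\lambda n+\mu k+O(1)$ and $\lambda,\mu\ge0$, is not available.
\item Since $(\log\mathcal E_2)'=4\log\frac{x+1}{x+2}<0$, the maximum of $\mathcal E_2$ is at $x=0$ as you guessed. Its value is $1/1024$, not roughly $1/4$.
\item For $H=1024\,\mathcal E_1$ one has $(\log H)'=\log\frac{(x+1)^5(x+4)}{|x-1|(x+2)^5}$. On $x>1$ this is positive because $(x+1)^5(x+4)-(x-1)(x+2)^5=(2x+3)(5x^2+15x+12)$, and $H\to1$, so $H<1$ there.
\item On $[0,1)$, $\log H$ is convex, so $H\le\max\{H(0),H(1)\}=\max\{1/4,\,2^{10}5^5/3^{15}\}<1$.
\item Hence $\sup_{x\ge0}\mathcal E_1=1/1024$. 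It is approached as $x\to\infty$, not near $x=0$ or in the transition region where you expected trouble.
\end{itemize}
The rate tends to its supremum at infinity while $F_1(N+a,k)$ decays only like $k^{-3}$ in $k$. Uniformity of your two-variable bound for large $x$ is therefore exactly what Au's criterion packages. A self-contained rederivation must handle that point.

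\textbf{A missing hypothesis.} Proposition~2.1 of \cite{AuSeeds2025} also needs the $k=0$ boundary series $\sum_n G_j(n+a,0)$ to converge. You never address this. Reflection and Stirling give $O(4096^{-n}n^{M_1})$ and $O(1024^{-n}n^{M_2})$.

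\textbf{Incorrect details.}
\begin{itemize}
\item The $k$-exponent of $F_1(z,k)$ is $-3$, not $-2$. The gamma arguments contribute $\frac52-\frac32-5=-4$ and the linear factor contributes $+1$.
\item A finite nonzero $\lim_kG_1$ therefore means the numerator of $R_1$ exceeds its denominator by exactly $3$ in $k$-degree. That numerator is the $62$-monomial polynomial itself, and the paper reads off its $k^8$ coefficient $32(820z^2+180z+13)$.
\item That limit is a $B_n$-type term of size $O(1024^{-n}n^{-1/2})$. The $4096$ in $B_n(a)$ is a constant; the $4096^{-n}$ rate belongs to $G_1(n+a,0)$.
\item Polynomial growth of $R_2$ alone does not give $G_2\to0$ at $n=0$, where $F_2(a,k)=O(k^{-1-4\Re a})$ is only about $k^{-3}$. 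You need the actual bound $R_2=O(k)$.
\end{itemize}

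None of these slips reverses a conclusion. But without the rate computation, the proposal does not yet prove the terminal-boundary claim.
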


\begin{proof}
For fixed $a$ in the punctured disk, the quotient estimate gives
\[
 F_1(a,k)=O_a(k^{-3}),
 \qquad F_2(a,k)=O_a(k^{-1-4\Re a}),
\]
so both initial $k$-series converge.  At $k=0$, reflection and Stirling
give
\[
 G_1(n+a,0)=O_a(4096^{-n}n^{M_1}),
 \qquad
 G_2(n+a,0)=O_a(1024^{-n}n^{M_2})
\]
for fixed exponents $M_1,M_2$.

The coefficient of $k^8$ in the first certificate numerator is
$32(820z^2+180z+13)$.  Hence
\[
 \lim_{k\to\infty}G_1(z,k)=
 -\frac{820z^2+180z+13}{4096z^5}
 \frac{\Gamma(1-2z)^3\Gamma(z+\frac12)^3}
 {\Gamma(\frac12-z)^7\Gamma(2z)^2}.
\]
At $z=n+a$, reflection and duplication show that this is
$O_a(1024^{-n}n^{-1/2})$.  For the second pair,
$F_2(z,k)=O_z(k^{-4z-1})$ and $R_2(z,k)=O_z(k)$, so
$G_2(z,k)=O_z(k^{-4z})\to0$.

For the terminal $n$-boundary, Au's exponential-rate criterion
\cite[Proposition~3.4]{AuSeeds2025} is unchanged by a bounded additive
shift.  The two rate functions are
\begin{align*}
 \mathcal E_1(x)
 &=\frac{|x-1|^{1-x}(x+1)^{5x+5}(x+4)^{x+4}}
 {1024(x+2)^{5x+10}},\\
 \mathcal E_2(x)
 &=\frac{(x+1)^{4x+4}}{4(x+2)^{4x+8}}.
\end{align*}
One has $\mathcal E_2(x)\le1/64$.  If
$H(x)=1024\mathcal E_1(x)$, then for $x>1$,
\[
 \frac{d}{dx}\log H(x)=
 \log\frac{(x+1)^5(x+4)}{(x-1)(x+2)^5}>0,
\]
because
\[
 (x+1)^5(x+4)-(x-1)(x+2)^5
 =(2x+3)(5x^2+15x+12)>0.
\]
Since $H(x)\to1$, this gives $H(x)<1$ for $x>1$.  On $0\le x<1$,
the same derivative formula with $1-x$ in place of $x-1$ shows that the
maximum occurs at an endpoint, where
\[
 H(0)=\frac14,
 \qquad H(1)=\frac{2^{10}5^5}{3^{15}}<1.
\]
Thus $\sup_{x\ge0}\mathcal E_1(x)=1/1024<1$, and Au's criterion proves
the terminal limits.
\end{proof}

\begin{proposition}[Holomorphic bridge]\label{prop:f6-bridge}
For every $u\in D_\rho$,
\begin{equation}\label{eq:f6-master-holomorphic}
 S_6(u)=\mathcal G_6(u)\sum_{m\ge1}
 \frac{\alpha_m(u)-\beta_m(u)}{u m^3},
\end{equation}
where the summand at $u=0$ is interpreted by its removable value.
\end{proposition}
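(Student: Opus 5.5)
The plan is to prove \eqref{eq:f6-master-holomorphic} first on the punctured disk $0<|u|<\rho$ and then extend it to $u=0$ by the identity theorem. The first step is to fix $u$ with $0<|u|<\rho$ and set $a=\tfrac12+u$. Lemma~\ref{lem:f6-wz-boundaries} supplies the hypotheses of \cite[Proposition~2.1]{AuSeeds2025} for both shifted WZ pairs. Summing the pairs over the lattice then gives \eqref{eq:f6-example-I} and the WZ component of \eqref{eq:f6-example-VI} at this particular value of $a$. This removes any need to appeal to meromorphic continuation from a different domain: the boundary terms vanish or are absolutely summable for each such $u$, so every series in those identities converges there.

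The second step is to combine the identities through the exact bridge algebra. I will multiply \eqref{eq:f6-example-VI} by $M(a)$ and substitute \eqref{eq:f6-map-A}--\eqref{eq:f6-map-B}. This gives \eqref{eq:f6-master} for each $0<|u|<\rho$, except possibly at the finitely many $u$ where some factor such as $\Gamma(\tfrac12-a)=\Gamma(-u)$, $1/u$, or a denominator in $A_k$ or $H_j$ degenerates. One point needs checking here: $\rho<1/16$ keeps $4a+k$, $1+2k-2a$ and the Pochhammer symbols $(3a+\tfrac12)_j$, $(2a+1)_{2n}$ away from zero, so the only exceptional point in $D_\rho$ should be $u=0$. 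If isolated exceptional points did occur, the continuity argument below would remove them anyway.

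The third step is to pass to $u=0$. By Lemma~\ref{lem:f6-S6-normal}, $S_6$ is holomorphic on $D_\rho$. By Lemma~\ref{lem:f6-removable}, $\sum_m h_m$ converges normally to a holomorphic function on $D_\rho$. Also, $\mathcal G_6$ is holomorphic there, since $\Gamma(1+4u)$, $\Gamma(1+u)$ and $1/\Gamma(1+2u)$ have no poles for $|u|<1/16$ and $(1+2u)^{-4}$ is regular. Both sides of \eqref{eq:f6-master-holomorphic} are therefore holomorphic on $D_\rho$ and agree on the punctured disk, hence at $u=0$ by continuity. At $u=0$ the summand is $h_m(0)$, which is exactly the removable value in the statement.

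I expect the main obstacle to be the first step: making sure the version of \eqref{eq:f6-example-VI} actually proved by the WZ pair $F_2$ holds for $a$ near $\tfrac12$, and not only in some other convergence region. The very-well-poised transformation from \cite[Example~VI]{AuSeeds2025}, which converts the WZ-side series into the displayed ${}_5F_4(1)$, must also be valid at these $a$. My first approach would be to verify convergence of both sides directly: $H_j(a)$ decays like $j^{-1-2\Re a}$ with $2\Re a\approx1$, and the right-hand side decays geometrically at rate $1024^{-n}$. Both sides are then analytic in $a$ on a neighbourhood of $\tfrac12$ minus the pole set, so an identity known on an open subset extends to all of it by analytic continuation.
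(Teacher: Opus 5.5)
Your proposal is correct and is essentially the paper's own argument: you obtain the identity on $0<|u|<\rho$ from Lemma~\ref{lem:f6-wz-boundaries} and the bridge algebra of Lemma~\ref{lem:f6-bridge-algebra}, then use Lemmas~\ref{lem:f6-S6-normal} and~\ref{lem:f6-removable} to show both sides are holomorphic on $D_\rho$ and conclude at $u=0$. One harmless slip: $H_j(a)$ decays like $j^{-2-2\Re a}$ (about $j^{-3}$ near $a=\tfrac12$), not like $j^{-1-2\Re a}$, so your convergence remark still holds.
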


\begin{proof}
For $0<|u|<\rho$, the WZ summation hypotheses are supplied by
Lemma~\ref{lem:f6-wz-boundaries}, and Lemma~\ref{lem:f6-bridge-algebra}
gives the identity.  Lemmas~\ref{lem:f6-S6-normal} and
\ref{lem:f6-removable} show that both sides extend holomorphically to
$D_\rho$.  The identity theorem supplies the value at $u=0$.
\end{proof}

\subsection{Exact MZV reduction through weight eleven}
For a composition $(s_1,\ldots,s_d)$, write
\[
 H_N(s_1,\ldots,s_d)=
 \sum_{N\ge n_1>\cdots>n_d\ge1}
 \frac1{n_1^{s_1}\cdots n_d^{s_d}},
 \qquad H_N^{(r)}=H_N(r).
\]
The integer-argument expansion
\[
 \log\Gamma(m+cu)-\log\Gamma(m)
 =c(H_{m-1}-\gamma)u+
 \sum_{r\ge2}\frac{(-1)^rc^r}{r}
 \bigl(\zeta(r)-H_{m-1}^{(r)}\bigr)u^r
\]
shows that the logarithmic coefficients of \eqref{eq:f6-alpha} and
\eqref{eq:f6-beta} belong to
\[
 \mathbb Q\!\left[\frac1m,H_{m-1}^{(1)},\ldots,H_{m-1}^{(r)},
 \zeta(2),\ldots,\zeta(r)\right].
\]
The Euler constants cancel by balance.  Finite stuffle multiplication and
\[
 \sum_{m\ge1}\frac{H_{m-1}(s_1,\ldots,s_d)}{m^q}
 =\zeta(q,s_1,\ldots,s_d),\qquad q\ge2,
\]
therefore express $[u^r]S_6(u)$ as an ordinary MZV of weight $r+4$.
At order zero this gives
\[
 S_6(0)=4\zeta(3,1)+5\zeta(4)
 =6\zeta(4)=\frac{\pi^4}{15}.
\]

For orders $0\le r\le7$, the ancillary generator constructs the exact
MZV target and writes an explicit rational certificate
\[
 D_rT_r=\sum_j c_{r,j}R_{r,j},
\]
where the rows $R_{r,j}$ are instances of MZV duality, convergent
shuffle--stuffle, the fixed-depth sum formula, or the
Ihara--Kaneko--Zagier derivation relations \cite{IKZ2006}.  A separately
written checker reconstructs the gamma expansions and every relation row,
reads only the integer certificate multipliers, and tests the identity
coefficient by coefficient over $\mathbb Q$.  It performs no row solving
and uses no conjectural MZV dimension.

\begin{table}[ht]
\centering
\caption{Exact certificate data for the inverse-$4096$ branch.}
\label{tab:f6-certificates}
\small
\begin{tabular}{@{}rrrrrr@{}}
\toprule
$r$ & weight & raw MZVs & relation rows & quotient diagnostic & support\\
\midrule
0&4 &2  &8   &1&1\\
1&5 &4  &22  &2&2\\
2&6 &9  &43  &2&10\\
3&7 &20 &100 &3&23\\
4&8 &41 &205 &4&58\\
5&9 &83 &454 &5&121\\
6&10&169&951 &7&246\\
7&11&339&2056&9&501\\
\bottomrule
\end{tabular}
\end{table}
The quotient column is diagnostic only.

\begin{theorem}[Sun's Conjecture~4.6]\label{thm:f6}
The zeroth-order companion identity is
\[
 \sum_{n=1}^{\infty}f_6(n)=\frac{\pi^4}{15}.
\]
Moreover,
\begin{align}
 \sum_{n=1}^{\infty}f_6'(n)&=-48\zeta(5),
 \label{eq:f6-1}\\
 \sum_{n=1}^{\infty}f_6''(n)&=356\zeta(6),
 \label{eq:f6-2}\\
 \sum_{n=1}^{\infty}f_6^{(3)}(n)
 &=288\bigl(4\zeta(2)\zeta(5)-15\zeta(7)\bigr),
 \label{eq:f6-3}\\
 \sum_{n=1}^{\infty}f_6^{(4)}(n)&=12512\zeta(8),
 \label{eq:f6-4}\\
 \sum_{n=1}^{\infty}f_6^{(5)}(n)
 &=5760\bigl(60\zeta(2)\zeta(7)-48\zeta(4)\zeta(5)
 -49\zeta(9)\bigr),
 \label{eq:f6-5}\\
 \sum_{n=1}^{\infty}f_6^{(6)}(n)
 &=1728\bigl(1440\zeta(5)^2-1999\zeta(10)\bigr),
 \label{eq:f6-6}\\
 \frac1{241920}\sum_{n=1}^{\infty}f_6^{(7)}(n)
 &=885\zeta(11)+196\zeta(2)\zeta(9)
 -720\zeta(4)\zeta(7)-336\zeta(5)\zeta(6).
 \label{eq:f6-7}
\end{align}
Thus equations~(4.23)--(4.29) of \cite{Sun2026} hold; the zeroth-order
formula is the identity recorded in \cite[Remark~4.6]{Sun2026}.
\end{theorem}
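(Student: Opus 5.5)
By Lemma~\ref{lem:f6-S6-normal}, $S_6$ is holomorphic on $D_\rho$ and may be differentiated termwise. Hence $\sum_{n\ge1}f_6^{(r)}(n)=S_6^{(r)}(0)=r!\,[u^r]S_6(u)$, and it suffices to compute the Taylor coefficients $[u^r]S_6(u)$ for $0\le r\le7$. The tool for this is the holomorphic bridge, Proposition~\ref{prop:f6-bridge}, which writes
\[
 S_6(u)=\mathcal G_6(u)\sum_{m\ge1}h_m(u)
\]
with $h_m$ as in Lemma~\ref{lem:f6-removable}.

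First expand the prefactor. $\mathcal G_6(u)=2(1+4u)(1+2u)^{-4}C_6(u)$, and Lemma~\ref{lem:balanced-prefactor} gives $\log C_6(u)=\sum_{r\ge2}(-1)^r\chi_r^{(6)}\zeta(r)u^r/r$. Its Taylor coefficients $g_j$ are therefore explicit polynomials in $\zeta(2),\dots,\zeta(7)$ with rational coefficients.

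Next expand the kernel. Write $\log\alpha_m(u)$ and $\log\beta_m(u)$ using the integer-argument $\log\Gamma$ expansion recorded above. The Euler constants cancel, because the $u$-linear coefficients sum to zero in each quotient. Exponentiate by the Bell recurrence \eqref{eq:exp-recurrence} up to order $u^8$. Then $[u^r]h_m=m^{-3}\bigl([u^{r+1}]\alpha_m-[u^{r+1}]\beta_m\bigr)$ is a polynomial in $1/m$, in the $H_{m-1}^{(k)}$, and in the $\zeta(k)$. Convert products of the $H_{m-1}^{(k)}$ to strict multiple harmonic sums by stuffle, and sum over $m$ using $\sum_m H_{m-1}(\boldsymbol s)/m^q=\zeta(q,\boldsymbol s)$. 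This requires $q\ge2$, which holds because of the factor $m^{-3}$ together with the extra powers of $1/m$ arising from the rational parts such as $(2m+3u)$. Normal convergence (Lemma~\ref{lem:f6-removable}) justifies extracting coefficients termwise. This produces $k_r:=[u^r]\sum_m h_m$ as an explicit rational combination of MZVs of weight $r+4$, including products with $\zeta(k)$.

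Then recombine: $[u^r]S_6=\sum_{j\le r}g_jk_{r-j}$. Multiply by $r!$ and subtract the claimed right-hand side; for $r=0$ the claimed value is $6\zeta(4)$. This gives a target vector $T_r$ in the space of formal MZVs of weight $r+4$, after expanding products such as $\zeta(5)^2$ and $\zeta(2)\zeta(9)$ by stuffle. To prove $T_r=0$, exhibit rational multipliers $c_{r,j}$ with $D_rT_r=\sum_j c_{r,j}R_{r,j}$. The rows $R_{r,j}$ are drawn from duality, shuffle--stuffle, the sum formula, and IKZ derivation relations, exactly as in Certificate~B. The identity is then checked coefficientwise by the independent checker; Table~\ref{tab:f6-certificates} records the sizes. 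As a sanity check, $r=0$ reduces by hand to $4\zeta(3,1)+5\zeta(4)=6\zeta(4)$, using $\zeta(3,1)=\zeta(4)/4$.

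The main obstacle is the weight-$10$ and weight-$11$ cases, $r=6,7$. There the target has $169$ and $339$ raw MZVs, and the relation system must be large enough to kill every non-$\{2,3\}$-reducible direction. These include $\zeta(5)^2$-type and depth-$3$ elements, with quotient dimensions $7$ and $9$ for the relation system used. If the four relation families fail to span the target at weight eleven, I would first enlarge the system with additional IKZ derivations $\partial_m$ for larger $m$ and more shuffle--stuffle products, including products of three MZVs built from double shuffles. Proving that the target lies in the span is all that is needed, so no dimension lower bound is required. A secondary delicate point is bookkeeping of the removable pole: one must expand $\alpha_m,\beta_m$ to order $r+1$, not $r$, for each $r$.
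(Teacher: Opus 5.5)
Your proposal is correct and is essentially the paper's proof: termwise differentiation via Lemma~\ref{lem:f6-S6-normal}, Taylor expansion through order seven of the holomorphic bridge in Proposition~\ref{prop:f6-bridge} (the balanced prefactor times the harmonic-sum expansion of $h_m$, with the removable pole handled by expanding $\alpha_m,\beta_m$ one order higher), and exact certificates $D_rT_r=\sum_j c_{r,j}R_{r,j}$ built only from duality, shuffle--stuffle, the sum formula and the IKZ derivation relations, as summarized in Table~\ref{tab:f6-certificates}. Your fallback of enlarging the relation system is not needed, because those four families already suffice at every weight up to eleven.
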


\begin{proof}
By Proposition~\ref{prop:f6-bridge}, Taylor expansion of the bridge is
legitimate through order seven.  The exact certificates summarized in
Table~\ref{tab:f6-certificates} reduce the resulting MZV expressions to
the right sides of \eqref{eq:f6-1}--\eqref{eq:f6-7}; the order-zero
certificate gives the preceding companion identity.  Finally,
Lemma~\ref{lem:f6-S6-normal} identifies $S_6^{(r)}(0)$ with the displayed
termwise derivative sums.
\end{proof}

\begin{remark}[Finite versus all-orders depth collapse]
Theorem~\ref{thm:f6} proves that the answers through weight eleven lie in
the algebra generated by single zeta values.  It does not prove the
all-orders assertion.  It remains natural to ask whether every derivative
sum in the $f_6$ family has this property and whether the factorization of
$\chi_r^{(6)}$ controls that phenomenon.
\end{remark}

\section{Independent exact certificates and reproducibility}
\label{sec:joint-repro}
The three branches retain separately auditable verification paths.

The directory \path{anc/branch-chi2} contains the pure-Python
certificates for Conjectures~4.2--4.3.  From that directory,
\begin{verbatim}
python3 run_all.py
\end{verbatim}
reconstructs the formal coefficient identities, all twelve MZV reductions,
the elimination-free certificate checks, and the SHA--256 manifest using
only the Python standard library.

The directory \path{anc/branch-chi3} contains the Wolfram-language
certificates for the cubic branch.  From that directory,
\begin{verbatim}
wolframscript -file run_q4_exact_all_in_one.wls
wolframscript -file run_q5_span_all_in_one.wls
wolframscript -file run_q5_finish_all_in_one.wls
\end{verbatim}
reconstructs the eleven coefficient-space memberships, evaluates the
transformed sides, and checks literal symbolic zero residuals.  This branch
requires the precisely identified versions of Au's Example-IV notebook and
his \texttt{MultipleZetaValues} package; the branch README records their
sources and integrity hashes.

The directory \path{anc/branch-f6} contains the inverse-$4096$
proof.  Its exact checks are
\begin{verbatim}
python3 verify_f6_wz_certificates.py
python3 verify_f6_bridge_algebra.py
python3 f6_bridge_exact_verifier.py
python3 f6_independent_certificate_checker.py
\end{verbatim}
The first command verifies both rational WZ telescoping certificates in
SymPy; the archival Wolfram-language checker verifies the same identities.
The second checks the polynomial shift and every term-level gamma substitution
in the bridge.  The third generates exact MZV certificates through weight
eleven.  The fourth is a separately written direct checker: it regenerates
the gamma expansions and proved relation rows, reads only the certificate
multipliers, and checks the identities coefficient by coefficient without
row solving.  The published non-termwise hypergeometric input is the
diagonal very-well-poised transformation in Au's Example~VI, cited
explicitly in Section~\ref{sec:f6}.

In no branch is floating-point agreement an acceptance criterion.  Each
branch includes execution instructions, expected exact outputs, and
SHA--256 integrity data for its proof-critical files.

\section{Outlook: all-orders depth and WZ descent}
\label{sec:outlook}
Theorem~\ref{thm:f6} proves depth-one reductions through weight eleven,
but does not settle the full derivative tower.
\begin{conjecture}\label{conj:depth-collapse}
Every derivative sum in the $f_6$ family belongs to the subalgebra of the
MZV algebra generated by the single zeta values $\zeta(m)$; no irreducible
multiple zeta value of depth at least two occurs.
\end{conjecture}
A broader classification problem is to determine which balanced exponent
data $(a_i,e_i)$ admit a WZ descent to level~$1$, and whether the
factorization pattern of the characteristic sequence $\chi_m$ constrains
the possible MZV depth.

\appendix
\section{The coefficient-extraction operators}\label{app:operators}
\begingroup\small

The operators in Proposition~\ref{prop:f3-extraction} are
\begin{align}
 \Delta_0={}&-\frac{(2\opB+\opD)(2\opB+3\opC-\opD)}9, \label{eq:D0}\\
 \Delta_1={}&-\frac{2\opC}{9}
 \left(4\opB^2+6\opB\opC+2\opB\opD+3\opC\opD+\opD^2\right), \label{eq:D1}\\
 \Delta_2={}&-\frac{\opC}{9}\bigl(
 2\opA\opB\opC+8\opB^3+20\opB^2\opC+4\opB^2\opD
 +28\opB\opC^2+12\opB\opC\opD+2\opB\opD^2 \notag\\
 &\hspace{35mm}+14\opC^2\opD+6\opC\opD^2+\opD^3\bigr), \label{eq:D2}\\
 \Delta_3={}&2\opC\Delta_2. \label{eq:D3}
\end{align}
For the last two operators, set
\begin{align*}
 R_4={}&4\opA\opB^3+14\opA\opB^2\opC+22\opA\opB\opC^2
 +5\opA\opB\opC\opD-\opA\opB\opD^2\\
 &+16\opB^4+54\opB^3\opC+8\opB^3\opD
 +108\opB^2\opC^2+30\opB^2\opC\opD\\
 &+2\opB^2\opD^2+140\opB\opC^3+72\opB\opC^2\opD
 +16\opB\opC\opD^2-2\opB\opD^3\\
 &+70\opC^3\opD+34\opC^2\opD^2+6\opC\opD^3-2\opD^4,
\end{align*}
and
\begin{align*}
 R_5={}&4\opA\opB^3+14\opA\opB^2\opC+2\opA\opB^2\opD
 +22\opA\opB\opC^2+7\opA\opB\opC\opD+\opA\opB\opD^2\\
 &+16\opB^4+54\opB^3\opC+10\opB^3\opD
 +108\opB^2\opC^2+34\opB^2\opC\opD\\
 &+6\opB^2\opD^2+140\opB\opC^3+76\opB\opC^2\opD
 +24\opB\opC\opD^2+4\opB\opD^3\\
 &+70\opC^3\opD+38\opC^2\opD^2+12\opC\opD^3+2\opD^4.
\end{align*}
Then
\begin{equation}\label{eq:D45}
 \Delta_4=-\frac{\opC^2}{9}R_4,
 \qquad
 \Delta_5=-\frac{2\opC^3}{9}R_5.
\end{equation}

\endgroup

\section{Exact operator data}\label{app:operators-code}
The following ancillary file stores the six coefficient functionals used in
Proposition~\ref{prop:f3-extraction}.  It is short enough to reproduce in full.  In the code, the symbols
\texttt{A}, \texttt{B}, \texttt{C}, and \texttt{D} correspond to
$\opA$, $\opB$, $\opC$, and $\opD$, respectively.
\begin{lstlisting}[style=proofcode,caption={Coefficient operators}]
"""Exact coefficient-extraction operators used in the proof of Sun's Conjecture 4.3.

A monomial A^i B^j C^k D^l is stored under the key (i,j,k,l).
All coefficients are fractions.Fraction objects.
"""
from fractions import Fraction

D = []
D.append({
    (0,2,0,0): Fraction(-4,9),
    (0,1,1,0): Fraction(-6,9),
    (0,0,1,1): Fraction(-3,9),
    (0,0,0,2): Fraction(1,9),
})
D.append({
    (0,2,1,0): Fraction(-8,9),
    (0,1,2,0): Fraction(-12,9),
    (0,1,1,1): Fraction(-4,9),
    (0,0,2,1): Fraction(-6,9),
    (0,0,1,2): Fraction(-2,9),
})
D2_terms = {
    (1,1,2,0):2,
    (0,3,1,0):8,(0,2,2,0):20,(0,2,1,1):4,
    (0,1,3,0):28,(0,1,2,1):12,(0,1,1,2):2,
    (0,0,3,1):14,(0,0,2,2):6,(0,0,1,3):1,
}
D.append({key: Fraction(-value,9) for key,value in D2_terms.items()})
D.append({(i,j,k+1,l): 2*value for (i,j,k,l),value in D[2].items()})
D4_terms = {
    (1,3,2,0):4,(1,2,3,0):14,(1,1,4,0):22,
    (1,1,3,1):5,(1,1,2,2):-1,
    (0,4,2,0):16,(0,3,3,0):54,(0,3,2,1):8,
    (0,2,4,0):108,(0,2,3,1):30,(0,2,2,2):2,
    (0,1,5,0):140,(0,1,4,1):72,(0,1,3,2):16,
    (0,1,2,3):-2,(0,0,5,1):70,(0,0,4,2):34,
    (0,0,3,3):6,(0,0,2,4):-2,
}
D.append({key: Fraction(-value,9) for key,value in D4_terms.items()})
D5_terms = {
    (1,3,3,0):4,(1,2,4,0):14,(1,2,3,1):2,
    (1,1,5,0):22,(1,1,4,1):7,(1,1,3,2):1,
    (0,4,3,0):16,(0,3,4,0):54,(0,3,3,1):10,
    (0,2,5,0):108,(0,2,4,1):34,(0,2,3,2):6,
    (0,1,6,0):140,(0,1,5,1):76,(0,1,4,2):24,
    (0,1,3,3):4,(0,0,6,1):70,(0,0,5,2):38,
    (0,0,4,3):12,(0,0,3,4):2,
}
D.append({key: Fraction(-2*value,9) for key,value in D5_terms.items()})

assert len(D) == 6
assert all(sum(mon) == r+2 for r,operator in enumerate(D) for mon in operator)
\end{lstlisting}
\section{Explicit form of the WZ input}\label{app:wzinput}

We use the rising factorial notation $(x)_m=\Gamma(x+m)/\Gamma(x)$.  The transformed-side summands in
\eqref{eq:AuIdentities} are
\begin{align}
 \mathcal M_A
 &=\frac{2(1+2a-2c-2d)(3+2a+4e+4k)}
 {(1+2e+2k)(1+2a-2b+2e+2k)}\notag\\
 &\quad{}\times
 \frac{\RF{1+a+e}{k}\RF{1+b+e}{k}}
 {\RF{\frac12+e}{k}\RF{\frac12+a-b+e}{k}}\notag\\
 &\quad{}\times
 \frac{\RF{\frac12+c+e}{k}\RF{\frac12+d+e}{k}}
 {(1+a-c+e+k)(1+a-d+e+k)}\notag\\
 &\quad{}\times
 \frac{1}{\RF{1+a-c+e}{k}\RF{1+a-d+e}{k}},\label{eq:MA}\\
 \mathcal M_B
 &=\frac{2+a+2e+2k}{1+e+k}\notag\\
 &\quad{}\times
 \frac{\RF{1+a+e}{k}\RF{1+b+e}{k}}
 {(1+a-b+e+k)\RF{1+e}{k}\RF{1+a-b+e}{k}}\notag\\
 &\quad{}\times
 \frac{\RF{1+c+e}{k}\RF{1+d+e}{k}}
 {(1+a-c+e+k)(1+a-d+e+k)}\notag\\
 &\quad{}\times
 \frac{1}{\RF{1+a-c+e}{k}\RF{1+a-d+e}{k}},\label{eq:MB}\\
 \mathcal M_C
 &=\frac{(2+a+2d+2k)(1+b+2d+2k)}
 {(1+d+k)(1+a-c+d+k)}\notag\\
 &\quad{}\times
 \frac{\RF{1+a+d}{k}\RF{1+c+d}{k}}
 {(1+2a-b+2d+2k)(2+2a-b+2d+2k)}\notag\\
 &\quad{}\times
 \frac{\RF{1+b+2d}{2k}}
 {\RF{1+d}{k}\RF{1+a-c+d}{k}
  \RF{1+2a-b+2d}{2k}}.\label{eq:MC}
\end{align}

The hypergeometric summands $\mathcal H_A$ and $\mathcal H_B$ have the
factorized forms
\begin{align}
 \mathcal H_A&=-2(-1)^nP_A(a,b,c,d,e,n)\,
 \frac{N_A(a,b,c,d,e,n)}{D_A(a,b,c,d,e,n)},\label{eq:HAfactor}\\
 N_A&=\RF{1-d}{n}\RF{1+a+e}{2n}\RF{1+b+e}{n}\notag\\
 &\quad{}\times\RF{\frac12+c+e}{n}\RF{1+a-b-c}{n}\notag\\
 &\quad{}\times\RF{1+a-b-d}{2n}
                 \RF{\frac12+a-c-d}{2n},\label{eq:NA}\\[2mm]
 \mathcal H_B&=-(-1)^nP_B(a,b,c,d,e,n)\,
 \frac{N_B(a,b,c,d,e,n)}{D_B(a,b,c,d,e,n)},\label{eq:HBfactor}\\
 N_B&=\RF{1-d}{n}\RF{1+a+e}{2n}
      \RF{1+b+e}{2n}\notag\\
 &\quad{}\times\RF{1+c+e}{2n}\RF{1+d+e}{n}\notag\\
 &\quad{}\times\RF{1+a-b-d}{n}\RF{1+a-c-d}{n}.
 \label{eq:NB}
\end{align}
where $D_A,D_B$ are the products of the linear and Pochhammer factors shown
in Supplement~A.  The degree-seven polynomials $P_A$ and $P_B$ contain
$1428$ and $986$ monomials, respectively; printing them inline would
obscure the argument.  They, the denominators, and the shorter full formula
for $\mathcal H_C$ are reproduced verbatim in the separately compiled
Supplement~A and in \texttt{ExampleIV\_summands.wl}.  The supplement is
part of the source archive and is used directly by the exact scripts.

\section*{Declarations}
\noindent\textbf{Author contributions.}
The author developed the reductions, implemented and audited the exact
certificates, and wrote the manuscript.

\medskip
\noindent\textbf{Funding.}
No specific funding was received for this work.

\medskip
\noindent\textbf{Competing interests.}
The author declares no competing interests.

\medskip
\noindent\textbf{Data and code availability.}
All proof-critical source files, exact certificates, software inputs,
summaries, and integrity hashes accompany the manuscript.

\begingroup\raggedright

\endgroup
\end{document}